\newtheorem{theorem}{Theorem}[section]
\newtheorem{corollary}[theorem]{Corollary}
\newtheorem{lemma}[theorem]{Lemma}
\newtheorem{proposition}[theorem]{Proposition}
\theoremstyle{definition}
\newtheorem{definition}[theorem]{Definition}
\theoremstyle{remark}
\newtheorem{remark}[theorem]{Remark}
\numberwithin{equation}{section}
\begin{document}

\title{Analytic approximation of transmutation operators for one-dimensional stationary Dirac operators and applications to solution of initial value and spectral problems}
\author{Nelson Guti\'{e}rrez Jim\'{e}nez$^a$ and Sergii M. Torba$^b$  \\
{\small $~^a$ Instituto de Matem\'{a}ticas, Facultad de Ciencias Exactas y Naturales, Universidad de Antioquia,}\\{\small Calle 67 No.~53--108, Medell\'{i}n, COLOMBIA}
\\{\small $~^b$ Departamento de Matem\'{a}ticas, CINVESTAV del IPN, Unidad
Quer\'{e}taro, }\\{\small Libramiento Norponiente No. 2000, Fracc. Real de Juriquilla,
Quer\'{e}taro, Qro. C.P. 76230 MEXICO}\\{\small e-mail:
njgutier@gmail.com, storba@math.cinvestav.edu.mx\thanks{The authors acknowledge the support from CONACYT, Mexico via the project 222478. N.~Guti\'{e}rrez Jim\'{e}nez would like to express his gratitude to the Mathematical department of Cinvestav where he completed the PhD program (the presented paper contains part of the obtained results) and to CONACYT, Mexico for the financial support making it possible.}}}

\maketitle

\begin{abstract}
A method for approximate solution of initial value and spectral problems for one dimensional Dirac equation based on an analytic approximation of the transmutation operator is presented. In fact the problem of numerical approximation of solutions
is reduced to approximation of the potential matrix by a finite linear combination of matrix valued functions related to
generalized formal powers introduced in \cite{KTG1}. Convergence rate estimates in terms of smoothness of the potential are proved.
The method allows one to compute both lower and higher eigendata with an extreme accuracy.
\end{abstract}

\section{Introduction}
In the present paper we consider a one-dimensional Dirac equation, that is, the system of linear differential equations of the form
\begin{equation}\label{DiracSystem}
    \begin{cases}
        y_2'+p(x) y_1 + q(x) y_2 = \lambda y_1,\\
        -y_1'+q(x) y_1 - p(x) y_2 = \lambda y_2,
    \end{cases}
\end{equation}
or in the matrix form,
\begin{equation}\label{DiracMatrix}
B \frac{dY}{dx} + Q(x)Y = \lambda Y,\qquad Y(x) = \begin{pmatrix}
    y_1(x) \\
    y_2(x) \\
\end{pmatrix},
\end{equation}
where
\begin{equation}\label{Eq DefMatricesBQ}
B = \begin{pmatrix}
    0 & 1 \\
    -1 & 0 \\
\end{pmatrix},\qquad Q(x) =
\begin{pmatrix}
    p(x) & q(x) \\
    q(x) & -p(x) \\
\end{pmatrix},
\end{equation}
$p, q\in L_1[0,b]$ are given complex-valued functions of the real variable $x$, and $\lambda$ (called the spectral parameter) is an arbitrary complex constant.

Due to its importance, the one-dimensional stationary Dirac equation has been the object of study in various areas of mathematics and mathematical physics (\cite{AHrynivM}, \cite{ClarkGesztesy2001}, \cite{Hryniv2011}, \cite{SavchukShkalikov}). And since the discovery of the fact that the one-dimensional Dirac equation appears during the solution of the modified Korteweg-de Vries equation by the inverse scattering method \cite{AKNS}, \cite{AblowitzSegur} there is a strong interest in solution of direct and inverse spectral problems related to \eqref{DiracSystem}. For an introduction to the Dirac equation see, for example, \cite{BG1}, \cite{BG2}, \cite{WGreiner}, \cite{LevitanSargsjan}, \cite{TB}. Up to our best knowledge, there are only several papers \cite{AnnabyTharwat2007}, \cite{AnnabyTharwat}, \cite{AnnabyTharwat2013}, \cite{Tharwat}, where a spectral problem for one-dimensional Dirac equation is solved numerically using a sampling method. All these approaches share a common disadvantage, a truncated representation works only in some neighborhood of zero and requires significant computation time due to necessity to solve particular initial value problems for all sampling points.

We are looking for a method of approximate solution of \eqref{DiracSystem} which can be efficiently used to obtain large sets of eigenvalues and eigenfunctions of spectral problems associated with \eqref{DiracSystem}. That is, we are interested in a method where some coefficients may need to be precomputed, but afterwards obtaining an approximate value of $Y(x,\lambda)$ for each additional $\lambda$ can be done in almost no cost. Additionally, we would like to have an error bound uniform for all $\lambda\in\mathbb{R}$ and exponentially decreasing as a function of a parameter $N$ describing the approximation. In \cite{KTG1} we presented a representation for the solution of \eqref{DiracSystem} in the form of spectral parameter power series allowing fast computation of approximation $Y_N(x,\lambda)$ but whose accuracy deteriorates rapidly as $\lambda\to\infty$.

In \cite{KrT2015} we proposed a method of approximate solution of one-dimensional stationary Schr\"{o}dinger equation based on an approximation of the transmutation operator. The method possesses all sought-for properties: efficient evaluation for each additional $\lambda$, uniform error bound and exponentially fast convergence. See \cite{KrT2016}, \cite{KrMorT2016}, \cite{KrST}, \cite{KTNavarro}, \cite{KTRaul}, \cite{KShT2018}, \cite{KrT2018}, \cite{KrT2020} for further development. The aim of the present paper is to extend the method to the system \eqref{DiracSystem}.

A transmutation operator for the Dirac equation can be realized as a Volterra integral operator. The integral kernel of this operator satisfies a certain Goursat problem for hyperbolic matrix equation. Using the generalized formal powers introduced in \cite{KTG1} we construct a complete system of solutions for this hyperbolic matrix equation (in the sense that any solution can be approximated in the uniform norm by a finite linear combination of solutions from the complete system), see Section \ref{Sect4}. In order to find coefficients of a linear combination approximating the integral kernel we utilize the Goursat data, see Section \ref{Sect5}. We prove the decay rate estimates depending on the smoothness of the potential, see Section \ref{Sect6}. Finally, we show how constructed approximation for the integral kernel leads to an efficient approximation of the solutions of \eqref{DiracSystem}, see Section \ref{Sect7}. We illustrate the results with several numerical examples in Section \ref{Sect9}. 

We would like to mention that the developed theory also provides different view on the results from \cite{KrT2015} and \cite{KrT2016}. In particular, we do not need an inverse of the transmutation operator in the proofs. Moreover, a one dimensional stationary Schr\"{o}dinger equation from \cite{KrT2015} can be transformed into a Dirac equation. Applying the proposed method for this Dirac equation, we obtained an analytic approximation of transmutation operator which is different to those of \cite{KrT2015}. Corresponding results are presented in Section \ref{Sect8}.

In Appendices \ref{Append1}--\ref{Append5} we prove several technical results on well-posedness of Goursat and Cauchy problems for the hyperbolic equation satisfied by the integral kernel $K$, study the smoothness of the integral kernel $K$ (depending on the smoothness of the potential matrix $Q$) and present explicit formula for the solution of a least squares minimization problem.

Throughout the paper we use the notation $|A|$ for matrix norm, from which we require to be submultiplicative, that is, to satisfy $|AB|\le |A|\cdot |B|$. From several possible norms we chose the one induced by the matrix scalar product $\langle A, B\rangle = \operatorname{tr} (AB^\ast)$, where $\operatorname{tr}A$ is the trace and $A^\ast$ denotes the conjugate transpose of a matrix $A$. This norm is known as the
Frobenius norm, $\|A\|^2=\sum_{i=1}^m\sum _{j=1}^{n}|a_{ij}|^2$. Also we denote by $C\bigl([a,b],\mathcal{M}_2\bigr)$ the space of continuous $2\times 2$-matrix functions $F$ equipped with the norm $\|F\|_{C\bigl([a,b],\mathcal{M}_2\bigr)} = \max_{[a,b]}|F|$ and by $L_2((a,b),\mathcal{M}_2)$ the space of integrable $2\times 2$-matrix functions equipped with the scalar product
\[
\langle F, G\rangle_{L_2((a,b),\mathcal{M}_2)} = \int_a^b \langle F(x), G(x)\rangle\,dx =
\int_a^b \operatorname{tr} \left(F(x) G^\ast(x)\right)\,dx.
\]
If there is no ambiguity, we will simply write $C[a,b]$ and $L_2(a,b)$ for the spaces and $\|F\|$ for the norm.

\section{Transmutation operators}\label{Sect2}

Following Levitan \cite{Levitan}, let $E$ be a linear topological space and $E_{1}$ its linear subspace (not necessarily closed). Let $\mathcal A_{1},\mathcal A_{2}:E_{1}\rightarrow E$ be linear operators.

\begin{definition}\label{DefTransOpe} A linear invertible operator $T$ defined on the whole $E$ such that $E_{1}$ is invariant under the action of $T$ is called a transmutation operator for the pair of operators $\mathcal A_{1}$ and $\mathcal A_{2}$ if it fulfills the following two conditions.
\begin{enumerate}
	\item Both the operator $T$ and its inverse $T^{-1}$ are continuous in $E$;
	\item The following operator equality is valid  $$\mathcal A_{1}T=T\mathcal A_{2}$$ or which is the same $$\mathcal A_{1}=T\mathcal A_{2}T^{-1}.$$
\end{enumerate}
\end{definition}

Let us denote by
\begin{equation}\label{ExpressionDiffDiracOperators}
    \mathcal A_{Q}:=B\frac{d}{dx}+Q(x),
\end{equation}
a differential operator related to the system \eqref{DiracMatrix},
and by $\mathcal A_{0}$ the differential operator \eqref{ExpressionDiffDiracOperators} having $Q$ as the null matrix-valued function.

Unless otherwise stated, in this section we assume that $Q$ is a continuously differentiable matrix-valued function on $\left[0,b\right]$. We will denote the class of such functions by $C^1([0,b],\mathcal{M}_2)$. Here we would like to mention that the approximation constructed further in this paper requires $Q$ to be defined only on $[0,b]$, but the definition of the transmutation operator either requires the symmetric segment $[-b,b]$ or some initial condition at 0 (which we would like to avoid). Since neither the expression defining the transmutation operator nor the final result depend on the values of the potential outside of the segment $[0,b]$, we will consider such extended potential when necessary in the paper.

Let $E$ be the space 
\[
C(\left[-b,b\right],\mathbb{C}^{2})=\left\{Y(t)=(y_{1}(t),y_{2}(t))^{T}\,|\, y_{1},y_{2}\in C[-b,b] \right\}
\]
and let the operators $\mathcal A_{0}$ and $\mathcal A_{Q}$ act on the space
\[
E_1 = C^{1}(\left[-b,b\right],\mathbb{C}^{2})=\left\{Y(t)=(y_{1}(t),y_{2}(t))^{T}\,|\, y_{1},y_{2}\in C^{1}[-b,b] \right\}.
\]

The following result holds.
\begin{theorem}\label{Th DiracTransOpe}
Suppose  that $Q$ is a continuously differentiable matrix-valued function on $\left[-b,b\right]$. Then a transmutation operator $T$, relating the operators $\mathcal A_{0}$ and $\mathcal A_{Q}$ in the sense of Definition \ref{DefTransOpe} for all $Y\in C^{1}(\left[-b,b\right],\mathbb{C}^{2})$, can be realized in the form of a Volterra integral operator
\begin{equation}\label{Eq VolIntOperator}
TY(x)=Y(x)+\int_{-x}^{x}K(x,t)Y(t)\,dt,
\end{equation}
where $K(x, t)$ is a $2\times 2$ matrix-valued function satisfying the partial differential equation
\begin{equation}\label{Eq IntKernel}
BK_{x}(x,t)+K_{t}(x,t)B = -Q(x)K(x,t)
\end{equation}
with the Goursat conditions
\begin{align}
BK(x,x)-K(x,x)B &=-Q(x)\label{Eq GsatDataX+},\\
BK(x,-x)+K(x,-x)B&= 0\label{Eq GsatDataX-}.
\end{align}

Conversely, if $K(x,t)$ is a solution of the Goursat problem \eqref{Eq IntKernel}--\eqref{Eq GsatDataX-}, then the
operator $T$ determined by formula \eqref{Eq VolIntOperator} is a transmutation operator for the pair of operators  $\mathcal A_{Q}$ and $\mathcal A_{0}$.
\end{theorem}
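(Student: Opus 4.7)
The plan is to prove the two implications of the theorem separately, with the converse being essentially a direct computation and the direct implication reducing to solvability of the Goursat problem \eqref{Eq IntKernel}--\eqref{Eq GsatDataX-}. Invertibility of $T$ with continuous inverse on $E=C([-b,b],\mathbb{C}^2)$ is automatic from the standard theory of Volterra integral operators of the second kind with continuous kernel, and invariance of $E_1$ under $T$ follows from the continuous differentiability of $K$. Hence the substantive content lies in the intertwining identity $\mathcal{A}_Q T = T\mathcal{A}_0$ and in the existence of $K$.

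For the converse, I would differentiate $TY(x)=Y(x)+\int_{-x}^{x}K(x,t)Y(t)\,dt$ via Leibniz's rule, producing boundary terms $K(x,x)Y(x)+K(x,-x)Y(-x)$ together with the integral of $K_x(x,t)Y(t)$. Multiplying by $B$ and adding $Q(x)TY(x)$ gives an expression for $\mathcal{A}_Q TY(x)$. On the other side, $T\mathcal{A}_0 Y(x)=BY'(x)+\int_{-x}^{x}K(x,t)BY'(t)\,dt$, and integrating by parts in $t$ throws the derivative onto $K$, producing boundary values $K(x,\pm x)BY(\pm x)$ and the integrand $-K_t(x,t)BY(t)$. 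Comparing the two expressions, the interior integrals cancel precisely when $BK_x+K_tB=-QK$, which is \eqref{Eq IntKernel}; the coefficients of $Y(x)$ agree when $BK(x,x)-K(x,x)B=-Q(x)$, which is \eqref{Eq GsatDataX+}; and the $Y(-x)$ terms cancel when $BK(x,-x)+K(x,-x)B=0$, which is \eqref{Eq GsatDataX-}. This establishes the converse direction and simultaneously reduces the forward direction to the construction of a kernel $K$ satisfying the Goursat problem.

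For the existence of such $K$, I would pass to characteristic coordinates $\xi=(x+t)/2$, $\eta=(x-t)/2$ and rewrite the hyperbolic matrix equation \eqref{Eq IntKernel} so that it can be integrated along characteristics emanating from the Goursat curves $t=\pm x$. After decomposing $K$ into parts commuting and anticommuting with $B$ (equivalently, into diagonal and antidiagonal blocks), the conditions \eqref{Eq GsatDataX+}--\eqref{Eq GsatDataX-} prescribe exactly one of these parts on each characteristic, and the combined system becomes equivalent to a matrix integral equation on the triangle $\{0\le x\le b,\ |t|\le x\}$ which can be solved by Picard iteration, yielding a continuously differentiable $K$.

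The main obstacle lies in this last step: since $B$ does not commute with a general $2\times 2$ matrix, the Goursat conditions constrain only half of the entries of $K$ on each characteristic, and one must check that the entries left undetermined by the boundary conditions are fixed consistently by the PDE so that the resulting integral equation is well posed. A careful treatment of this well-posedness, together with the regularity of $K$ in terms of the smoothness of $Q$, is what I would relegate to Appendix \ref{Append1}.
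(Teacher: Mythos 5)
Your proposal follows essentially the same route as the paper: the intertwining computation (Leibniz rule plus integration by parts, matching the interior integrand to \eqref{Eq IntKernel} and the boundary terms at $t=\pm x$ to \eqref{Eq GsatDataX+}--\eqref{Eq GsatDataX-}) is the standard argument the paper borrows from Levitan--Sargsjan, and the existence of $K$ is established exactly as you describe --- characteristic coordinates $\xi=(x+t)/2$, $\eta=(x-t)/2$, decomposition via the projectors $\mathcal{P}^{\pm}$ onto the parts commuting and anticommuting with $B$ (the Goursat data prescribing $\mathcal{P}^{+}[H]$ on one characteristic and $\mathcal{P}^{-}[H]$ on the other), reduction to an equivalent integral equation solved by successive approximations in Appendix \ref{Append1}. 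One minor correction: the subspace of matrices commuting with $B$ is spanned by $I$ and $B$, not the diagonal matrices, so your parenthetical ``equivalently, into diagonal and antidiagonal blocks'' is inaccurate, though this does not affect the structure of the argument.
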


The proof is similar to those of \cite[Theorem 10.3.1]{LevitanSargsjan}, the only part requiring modification is the proof of existence and uniqueness of the solution of the Goursat problem \eqref{Eq IntKernel}, \eqref{Eq GsatDataX+},  \eqref{Eq GsatDataX-}. The main reason is that the proof from \cite{LevitanSargsjan} requires continuation of the potential to the whole axis followed by solving two integral equations. Instead, we transformed the Goursat problem to an equivalent integral equation similar to \cite[(Section 1.2, problem 5)]{VMchenko} and adapted the proof from \cite{VMchenko}. Please see Appendix \ref{Append1} for details.

Note that for the definition of the transmutation operator \eqref{Eq VolIntOperator} one requires knowledge of the integral kernel $K$ only in the union $\Omega$ of the sets
\begin{equation}\label{Omega pm}
\Omega^{+}=\left\{(x,t): 0\le x\le b,\, \left|t\right|\le x\right\}\qquad\text{and}\qquad \Omega^{-}=\left\{(x,t):-b< x\le 0,\, \left|t\right|\le \left|x\right|\right\}.
\end{equation}
The Goursat problem \eqref{Eq IntKernel}--\eqref{Eq GsatDataX-} can be solved independently in the domains $\Omega^{+}$ and $\Omega^{-}$.

\begin{remark}
Under the condition of the potential to be continuously differentiable the Goursat problem \eqref{Eq IntKernel}--\eqref{Eq GsatDataX-} possesses a classical solution, i.e., the solution is a differentiable function satisfying equation \eqref{Eq IntKernel} in every point. It is well known that such restriction can be weakened. For potentials belonging to $L_1((0,b),\mathcal{M}_2)$ it is sufficient to ask that $K$ satisfies an equivalent integral equation and, when necessary, approximate $Q$ by a sequence of continuously differentiable potentials and pass to the limit, see \cite[Theorem 2.1]{AHrynivM}, \cite{LevitanSargsjan}, \cite{VMchenko}, \cite{Hryniv2011} for details. Further in this paper, when only the existence of the continuous operator $T$ and its property to map solution into solution are necessary, we will formulate the results for larger class of potentials.
\end{remark}

Consider the equation $\mathcal{A}_0 V = \lambda V$, $V=(v_1, v_2)^T$. Its general solution is given by
\begin{equation}\label{GenSolHomogen}
V(x)=
\begin{pmatrix}{v_{1}(x)}\\
               {v_{2}(x)}
\end{pmatrix}=c_{1}
\begin{pmatrix}{\cos \lambda x}\\
               {\sin \lambda x}
\end{pmatrix}+c_{2}
\begin{pmatrix}{-\sin\lambda x}\\
               {\cos\lambda x}
\end{pmatrix},\quad c_{1}, c_{2}\in \mathbb{C}.
\end{equation}
If one knows the transmutation operator for the pair $\mathcal{A}_0$, $\mathcal{A}_Q$, then the vector-valued function
\begin{equation}\label{Transmut SolToSol}
Y = TV
\end{equation}
is a solution of the Dirac equation \eqref{DiracSystem}.

Unfortunately, transmutation operator is known in an explicit form only for few potentials. However, suppose we can find an approximate integral kernel $K_N$ in a form
\begin{equation*}
K_{N}(x, t) =\sum_{n=0}^{N}\mathcal{K}_n(x) t^{n},
\end{equation*}
where $\mathcal{K}_{n}$ are $2\times 2$ matrix-valued functions. Then we may approximate the solutions of the system \eqref{DiracSystem} by
the vector-valued functions
\begin{equation*}
C_{N}(x,\lambda)=\begin{pmatrix}{\cos\lambda x}\\
                        {\sin\lambda x}
					\end{pmatrix}
		+\sum_{n=0}^{N}\mathcal{K}_{n}(x)\int_{-x}^{x}
		\begin{pmatrix}{t^{n}\cos\lambda t}\\
                   {t^{n}\sin\lambda t}
    \end{pmatrix}\,dt
\end{equation*}
and
\begin{equation*}
S_{N}(x,\lambda)=\begin{pmatrix}{-\sin\lambda x}\\
                        {\cos\lambda x}
			    \end{pmatrix}
		+\sum_{n=0}^{N}\mathcal{K}_{n}(x)\int_{-x}^{x}
		\begin{pmatrix}{-t^{n}\sin\lambda t}\\
                   {t^{n}\cos\lambda t}
		\end{pmatrix}\,dt.
\end{equation*}
Note that the integrals can be easily evaluated in the closed form.
The error of approximation can be estimated uniformly with respect to $\lambda\in\mathbb{R}$. Indeed, let
\begin{equation}\label{Estimate K KN}
\varepsilon(x)=\left\|K(x,\cdot)-K_N(x,\cdot)\right\|_{L_{2}(-x,x)}.
\end{equation}
Then using the Cauchy-Schwarz inequality we obtain
\begin{equation}\label{Estimate for Solution}
\begin{split}
\left|T\begin{pmatrix}{\cos\lambda x}\\
                       {\sin\lambda x}
				\end{pmatrix}-C_{N}(x,\lambda)\right|
&=\left|
\int_{-x}^{x}
\left(K(x,t)-K_{N}(x,t)\right)\begin{pmatrix} \cos\lambda t\\
                                         \sin\lambda t
                          \end{pmatrix}
													dt\right|\\
& = \left| \int_{-x}^x \begin{pmatrix} (K_{11} - K_{N,11})(x,t) \cos\lambda t + (K_{12} - K_{N,12})(x,t) \sin\lambda t\\
(K_{21} - K_{N,21})(x,t) \cos\lambda t + (K_{22} - K_{N,22})(x,t) \sin\lambda t
\end{pmatrix}dt\right|\\
&\le
\sqrt 2\varepsilon(x)\biggl(\int_{-x}^{x}\bigl(\cos^{2}\lambda t+\sin^{2}\lambda t\bigr)\,dt\biggr)^{1/2} \le
2\sqrt{x}\varepsilon(x).
\end{split}
\end{equation}
Similarly for the second solution.

We utilize the following scheme to construct an approximation $K_N$.
\begin{itemize}
\item Even though the transmutation operator can not be obtained explicitly, we show that by using a simple recurrent integration procedure one can obtain the images of the powers of $x$, i.e., the equalities
\begin{equation*}
T\begin{pmatrix}
{x^{k}}\\
{0}
\end{pmatrix}
=\Phi_{k}(x)
\qquad\text{and}\qquad
T\begin{pmatrix}{0}\\
               {x^{k}}
\end{pmatrix}
=\Psi_{k}(x),\quad k=0,1,2,\ldots.
\end{equation*}

\item There exist matrix-valued functions $\mathcal O_{m}^{j}$, $j\in{0,1,2,3}$, $m\in\mathbb{N}_0$ (obtained as images of so called wave polynomial matrices under the action of transmutation operator $T$) which form a complete system of solutions for equation \eqref{Eq IntKernel} in the sense that any solution of \eqref{Eq IntKernel} can be approximated by a finite linear combination of the functions $\mathcal{O}_m^j$. Moreover, each function $\mathcal{O}_m^j$ is a polynomial in $t$ whose coefficients are the functions $\Phi_k$ and $\Psi_k$ multiplied by some binomial coefficients.

\item We look for an approximate integral kernel in the form
\[
K_N(x,t) = \sum_{n=0}^N\Bigl(a_{n}\mathcal O_{n}^{1}(x,t) +b_{n}\mathcal O_{n}^{2}(x,t)+c_{n}\mathcal O_{n}^{3}(x,t)+d_{n}\mathcal O_{n}^{4}(x,t)\Bigr).
\]
Since each $\mathcal O_{m}^{j}$ is a solution of \eqref{Eq IntKernel}, the function $K_N$ is also a solution of \eqref{Eq IntKernel}.

\item The coefficients of the approximation
are obtained from the Goursat data \eqref{Eq GsatDataX+}, \eqref{Eq GsatDataX-}. Moreover, we show that by taking the half sum of the Goursat data, it is sufficient to solve only one minimization problem, cf. \cite[Theorem 5.1]{KrT2015}. By using the least squares method, we reduce the problem of finding the coefficients to solution of two linear systems of equations.

\item  An estimate for \eqref{Estimate K KN} immediately follows from the well-posedness of the Goursat problem.

\item Finally, we show that if $Q$ is a matrix-valued function of class $C^{r}$ on $\left[0,b\right]$,
the function $\varepsilon(x)$ in \eqref{Estimate K KN} can be bounded by $C_{r}/N^{r}$ for all $N>r$, where the constant $C_{r}$ depends on $Q$ and $x$ and does not depend on $N$.

\end{itemize}

\section{Recurrence integrals, the SPPS representation and mapping property}\label{Sect3}

\subsection{Spectral parameter power series}\label{SubSectSPPS}
Following \cite{KTG1}, let $(f,g)^T$ be a solution of the homogeneous Dirac equation $B\frac{dY}{dx}+Q(x)Y=0$, i.e.,
\begin{align}
\displaybreak[2]
        g'+p(x) f + q(x) g &=0\label{Eq1 HomDiracsystem},\\
        -f'+q(x) f - p(x) g &=0\label{Eq2 HomDiracsystem},
\end{align}
and suppose that both functions $f$ and $g$ are non-vanishing on $[0,b]$ (see Remark \ref{Rmk Non vanishing} with respect to the existence of such solutions).
Suppose additionally that $f(0)g(0)=1$. Consider the following systems of functions defined by recurrence relations:
\begin{align}
    X^{(0)}(x)&=-\int_{0}^{x}\dfrac{p(s)}{f^{2}(s)}ds,\qquad Y^{(0)}(x)=1 +\int_{0}^{x}\dfrac{p(s)}{g^{2}(s)}ds, \label{X0Y0}\\
\displaybreak[2]
    Z^{(n)}(x)&=\int_{0}^{x}\Bigl(f^{2}(s)X^{(n)}(s)+g^{2}(s)Y^{(n)}(s)\Bigr)ds,                          \label{Zn}\\
\displaybreak[2]
    X^{(n+1)}(x)&=-(n+1)\int_{0}^{x}\Bigl(\dfrac{p(s)}{f^{2}(s)}Z^{(n)}(s)+\dfrac{g(s)}{f(s)}Y^{(n)}(s)\Bigr)ds,  \label{Xn}\\
\displaybreak[2]
    Y^{(n+1)}(x)&=(n+1)\int_{0}^{x}\Bigl(\dfrac{p(s)}{g^{2}(s)}Z^{(n)}(s)+\dfrac{f(s)}{g(s)}X^{(n)}(s)\Bigr)ds,\qquad  n=0, 1,2,\ldots\label{Yn}
\end{align}
Similarly we use as the initial functions
\begin{equation}\label{XtYt0}
    \widetilde X^{(0)}(x) = 1 +\int_{0}^{x}\dfrac{p(s)}{f^{2}(s)}\,ds, \qquad    \widetilde Y^{(0)}(x) = -\int_{0}^{x}\dfrac{p(s)}{g^{2}(s)}\,ds \end{equation}
and define the functions $\widetilde Z^{(n)}$, $\widetilde X^{(n)}$ and $\widetilde Y^{(n)}$, $n\ge 0$ by formulas \eqref{Zn}--\eqref{Yn} replacing $X^{(n)}$, $Y^{(n)}$ and $Z^{(n)}$ by $\widetilde X^{(n)}$, $\widetilde Y^{(n)}$ and $\widetilde Z^{(n)}$, correspondingly.

The following result obtained in \cite{KTG1} establishes the relation of the
systems of functions $\{X^{(n)}\}_{n=0}^\infty$, $\{Y^{(n)}\}_{n=0}^\infty$, $\{\widetilde X^{(n)}\}_{n=0}^\infty$ and $\{\widetilde Y^{(n)}\}_{n=0}^\infty$ to the Dirac equation.

\begin{theorem}[\cite{KTG1}]\label{Th DiracSPPS} Suppose that both functions  $f$ and $g$ are absolutely continuous, non-vanishing on $[0,b]$ and satisfy the homogeneous Dirac equation \eqref{Eq1 HomDiracsystem}--\eqref{Eq2 HomDiracsystem} a.e.\ on $[0,b]$. Assume that $\frac{p}{f^{2}}$, $\frac{p}{g^{2}}$, $\frac{f}{g}$ and $\frac{g}{f}$ are integrable functions and that $\lambda$ is an arbitrary complex parameter.
Then the general  solution of the Dirac equation \eqref{DiracSystem} has the form
\begin{equation*}
Y = c_1 Y_1 + c_2 Y_2 = c_1\begin{pmatrix}
    u_1 \\
    v_1 \\
\end{pmatrix} + c_2\begin{pmatrix}
    u_2 \\
    v_2 \\
\end{pmatrix},
\end{equation*}
where $c_1$ and $c_2$ are arbitrary complex constants and
\begin{equation}\label{DiracSPPS}
\begin{pmatrix}
    u_1 \\
    v_1 \\
\end{pmatrix} = \sum_{n=0}^\infty \frac{\lambda^{n}}{n!} \begin{pmatrix} {f\widetilde{X}^{(n)}}\\
                                                         {g\widetilde{Y}^{(n)}}
																					\end{pmatrix}\qquad\text{and}\qquad
\begin{pmatrix}
    u_2 \\
    v_2 \\
\end{pmatrix}= \sum_{n=0}^\infty \frac{\lambda^{n}}{n!} \begin{pmatrix} {fX^{(n)}} \\
		                                                     {gY^{(n)}}
																				 \end{pmatrix}.
\end{equation}
Both series converge uniformly with respect to $x\in[0,b]$ and with respect to $\lambda$ belonging to a compact set on a complex plane to vector-valued functions $(u_{i},v_{i})^{T}$, $u_{i},v_{i}\in AC[0,b]$, $i=1,2.$
The solutions $Y_{1}$ and $Y_{2}$ satisfy the following initial conditions:
\begin{equation}\label{SPPS IC}
Y_{1}(0)=\begin{pmatrix}f(0)\\
                                           0
                             \end{pmatrix},
\qquad
Y_{2}(0)=\begin{pmatrix}0\\
                                              g(0)
                               \end{pmatrix}.														
\end{equation}
\end{theorem}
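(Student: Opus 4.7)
The plan is to proceed in four stages: derive the recurrences by formal substitution, establish uniform convergence via a priori bounds, verify the initial conditions, and check linear independence. Substituting $(u_2,v_2)^T = \sum_{n=0}^\infty \frac{\lambda^n}{n!}(fX^{(n)}, gY^{(n)})^T$ into the Dirac system \eqref{DiracSystem} and collecting coefficients of $\lambda^n$, I would use $f' = qf-pg$ and $g' = -pf-qg$ (which hold because $(f,g)^T$ solves the homogeneous equation) to reduce the identity to a chain of conditions on $X^{(n)}, Y^{(n)}$: at $n=0$ the pair $(fX^{(0)}, gY^{(0)})$ must itself solve the homogeneous system, while for $n\geq 1$ the pair $(fX^{(n)}, gY^{(n)})/n!$ must solve an inhomogeneous system with forcing $(fX^{(n-1)}, gY^{(n-1)})/(n-1)!$. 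Inverting these conditions by direct integration against the known solution $(f,g)$ produces precisely \eqref{X0Y0}--\eqref{Yn}, with the auxiliary function $Z^{(n)} = \int_0^x(f^2 X^{(n)} + g^2 Y^{(n)})\,ds$ emerging naturally as the combination that decouples the two components. Consistency at the base step $n=0$ is secured by the elementary identity $(fg)' = -p(f^2+g^2)$ (a direct consequence of the homogeneous system), which, using $f(0)g(0)=1$, integrates to $1/(fg) = Y^{(0)} - X^{(0)}$; this is exactly what is needed to make $(fX^{(0)}, gY^{(0)})$ a solution of the homogeneous equation. The same derivation with the other initial data yields the tilded sequences.

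For convergence, let $M$ be an upper bound on $[0,b]$ for $|p/f^2|+|p/g^2|+|f/g|+|g/f|+|f|^2+|g|^2$. A straightforward induction on the integral recurrences \eqref{Zn}--\eqref{Yn} yields bounds of the form $|X^{(n)}(x)|, |Y^{(n)}(x)|, |Z^{(n)}(x)| \leq C(Mx)^n$ for some constant $C$, so both the series \eqref{DiracSPPS} and the series obtained by termwise differentiation are dominated uniformly on $[0,b]$ and on compact $\lambda$-sets by $\sum_n C(M|\lambda|b)^n/n!$. This justifies termwise differentiation, places $(u_i, v_i)$ in $AC[0,b]$, and confirms that the formal solutions are genuine solutions.

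The initial conditions follow by inspection: every integral in \eqref{X0Y0}, \eqref{Zn}--\eqref{Yn}, \eqref{XtYt0} vanishes at $x=0$, giving $X^{(n)}(0) = 0$ for all $n$, $Y^{(0)}(0) = 1$, and $Y^{(n)}(0) = 0$ for $n\geq 1$ (and analogously for the tilded sequences), so that $Y_2(0) = (0, g(0))^T$ and $Y_1(0) = (f(0), 0)^T$. The Wronskian $u_1 v_2 - v_1 u_2$ evaluated at $x=0$ equals $f(0)g(0) = 1 \neq 0$, so $Y_1$ and $Y_2$ are linearly independent and span the general solution. I expect the main obstacle to be Step 1: the a priori ad hoc appearance of $Z^{(n)}$ becomes transparent only after one recognizes it as the combination forced by integration against $(f,g)$, and one must notice the identity $(fg)' = -p(f^2+g^2)$ to close the consistency check at $n=0$. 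Once this is understood, the remaining verifications are routine.
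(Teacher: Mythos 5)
This theorem is quoted from \cite{KTG1}; the present paper does not prove it, so there is no internal proof to compare against. Your reconstruction follows the standard SPPS argument, which is the one used in the cited source: substitute the ansatz, match powers of $\lambda$ using $f'=qf-pg$ and $g'=-pf-qg$, and close the resulting chain of inhomogeneous problems with the identity $1/(fg)=Y^{(0)}-X^{(0)}$ (from $(fg)'=-p(f^2+g^2)$ and the normalization $f(0)g(0)=1$, which is indeed a standing assumption of Subsection 3.1) together with its higher-order analogue $X^{(n+1)}-Y^{(n+1)}=-(n+1)Z^{(n)}/(fg)$, which is exactly where $Z^{(n)}$ is forced to appear. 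I verified these identities against \eqref{X0Y0}--\eqref{Yn}; they check out, and the initial conditions and Wronskian argument are as you say.

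The one step that does not match the stated hypotheses is the convergence estimate. You take $M$ to be a pointwise upper bound for $|p/f^2|+|p/g^2|+|f/g|+|g/f|$, but the theorem assumes only that these functions are \emph{integrable}, so such an $M$ need not exist. The induction must be run with $L_1$ weights, using $\int_0^x w(s)\bigl(\int_0^s w\,d\tau\bigr)^n ds=\frac{1}{n+1}\bigl(\int_0^x w\,d\tau\bigr)^{n+1}$ so that the prefactor $n+1$ in \eqref{Xn}--\eqref{Yn} is absorbed by the extra power of the iterated integral. Moreover, because $Z^{(n)}$ carries one additional integration against a different weight, the clean bound $C(Mx)^n$ does not close as a one-line induction; what one obtains are bounds of the shape $c^{\,n}\rho(x)^{n}/\lfloor n/2\rfloor!$ with $\rho$ an iterated-integral majorant built from both weights. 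These still dominate the series \eqref{DiracSPPS} by an exponential in $|\lambda|$, uniformly on $[0,b]$ and on compact $\lambda$-sets, and justify termwise differentiation and absolute continuity. With that repair the argument is complete.
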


Representation \eqref{DiracSPPS}, also known as the SPPS method (Spectral Parameter Power Series), present an efficient
and highly competitive technique for solving a variety of spectral and scattering problems related to Dirac
equation. The first work implementing an analogue of Theorem \ref{Th DiracSPPS} for numerical solution of Sturm-Liouville spectral problems was \cite{KrPorter2010} and later on the SPPS method was used in a number of publications (see \cite{KKRosu}, \cite{KTG1} and references therein).

\begin{remark}\label{Rmk Non vanishing}
It is worth mentioning that the existence and construction of the required solution $(f,g)^T$ presents no
difficulty. Indeed, let $p$ and $q$ be real valued and continuous on $[0, b]$. Then \eqref{Eq1 HomDiracsystem}--\eqref{Eq2 HomDiracsystem} possesses two linearly independent real-valued
solutions $(f_1,g_1)^T$  and $(f_2,g_2)^T$ such that neither $f_1$ and $f_2$ nor $g_1$ and $g_2$ can have common zero. Thus, one may choose $(f,g)^T = (f_1,g_1)^T + i(f_2,g_2)$. Moreover, for the construction of $(f_1,g_1)^T$  and $(f_2,g_2)^T$ the same SPPS method may be used, see \cite[Section 2.3]{KTG1} for details. In the case of complex-valued coefficients the existence of a non-vanishing
solution was shown in \cite[Proposition 2.9]{KTG1}.
\end{remark}

\subsection{Mapping property}

Unfortunately the integral kernel of the operator $T$ can be found in closed form only for a few particular potentials, in general it is unknown. So there is no way to determine the result of $T$ acting on an arbitrary vector-valued function. However, it is possible to determine the result of $T$ acting on an arbitrary vector function of the form $(x^{k}, x^{m})^{T}$, and, hence, on arbitrary vector-function $\left(p_{1}, p_{2}\right)^{T}$, where $p_{1}$ and $p_{2}$ are polynomials.

Under the assumptions of Subsection \ref{SubSectSPPS}, recalling that $f(0)g(0)=1$, the following mappings are valid, c.f., \eqref{GenSolHomogen}, \eqref{Transmut SolToSol}, \eqref{Eq VolIntOperator} and \eqref{SPPS IC}.
\begin{equation}\label{Eq1 SPPSandTransOpe}
T\begin{pmatrix}{\cos\lambda x}\\
                {\sin\lambda x}
 \end{pmatrix}=\frac{1}{f(0)}Y_1(x) =g(0)Y_1(x)=
g(0)\sum_{k=0}^{\infty}\frac{\lambda^{k}}{k!}\begin{pmatrix}{f(x)\widetilde{X}^{(k)}(x)}\\
                                                            {g(x)\widetilde{Y}^{(k)}(x)}
																					    \end{pmatrix},
\end{equation}
and
\begin{equation}\label{Eq2 SPPSandTransOpe}
T\begin{pmatrix}{-\sin\lambda x}\\
                {\cos\lambda x}
	\end{pmatrix}=\frac{1}{g(0)}Y_2(x) = f(0)Y_2(x)=
	f(0)\sum_{k=0}^{\infty}\frac{\lambda^{k}}{k!}\begin{pmatrix}{f(x)X^{(k)}(x)}\\
	                                                            {g(x)Y^{(k)}(x)}
																							  \end{pmatrix}.
\end{equation}
Furthermore, the solutions of the Dirac equation are analytic in the spectral parameter. Therefore, by representing $\sin\lambda x$ and $\cos\lambda x$ as power series, and by comparing coefficients near the powers of $\lambda$ in \eqref{Eq1 SPPSandTransOpe} and \eqref{Eq2 SPPSandTransOpe}, we obtain the above assertion.

We introduce the infinite sequences  of vector-valued functions $\left\{\Phi_{k}\right\}_{k=0}^{\infty}$ and $\left\{\Psi_{k}\right\}_{k=0}^{\infty}$ given respectively by
\begin{equation}\label{Phisubk}
\Phi_{k}=
\begin{cases}
(-1)^{(k+1)/2}g(0)\begin{pmatrix}{fX^{(k)}}\\{gY^{(k)}}\end{pmatrix}, & k\,\text{odd},\\
(-1)^{k/2}f(0)\begin{pmatrix}{f\widetilde{X}^{(k)}}\\{g\widetilde{Y}^{(k)}}\end{pmatrix}, & k\,\text{even}
\end{cases}
\end{equation}
and
\begin{equation}\label{Psisubk}
\Psi_{k}=
\begin{cases}
(-1)^{(k-1)/2}g(0)\begin{pmatrix}{f\widetilde{X}^{(k)}}\\{g\widetilde{Y}^{(k)}}\end{pmatrix}, & k\,\text{odd},\\
(-1)^{k/2}f(0)\begin{pmatrix}{fX^{(k)}}\\{gY^{(k)}}\end{pmatrix}, & k\,\text{even}.
\end{cases}
\end{equation}

Finally, we obtain the following theorem.

\begin{theorem}[Mapping theorem]\label{Th MappingTheorem} Let $p,q\in L^{1}\left(0,b\right)$ be complex valued functions. Let $f$ and $g$ be as in Theorem \ref{Th DiracSPPS} normalized according to the condition $f(0)g(0)=1$. Let $T$ be the transmutation operator for $\mathcal A_{0}$ and $\mathcal A_{Q}$, and let $\Phi_{k}$ and $\Psi_{k}$ be vector-valued functions defined by \eqref{Phisubk} and \eqref{Psisubk} respectively. Then
\begin{equation}\label{Eq MappingTheorem}
T\begin{pmatrix}
{x^{k}}\\
{0}
\end{pmatrix}
=\Phi_{k}(x)
\qquad\text{and}\qquad
T\begin{pmatrix}{0}\\
               {x^{k}}
\end{pmatrix}
=\Psi_{k}(x),\quad k=0,1,2,\ldots.
\end{equation}
\end{theorem}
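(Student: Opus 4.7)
The plan is to deduce the mapping formulas directly from the SPPS representations \eqref{Eq1 SPPSandTransOpe} and \eqref{Eq2 SPPSandTransOpe}, using the entirety of both sides in $\lambda$ and matching Taylor coefficients. Concretely, I would expand
\[
\cos\lambda x = \sum_{j=0}^{\infty}\frac{(-1)^j\lambda^{2j}}{(2j)!}x^{2j},\qquad \sin\lambda x = \sum_{j=0}^{\infty}\frac{(-1)^j\lambda^{2j+1}}{(2j+1)!}x^{2j+1},
\]
and write the left-hand sides of \eqref{Eq1 SPPSandTransOpe} and \eqref{Eq2 SPPSandTransOpe} as
\[
\begin{pmatrix}\cos\lambda x\\ \sin\lambda x\end{pmatrix} = \sum_{j=0}^{\infty}\frac{(-1)^j\lambda^{2j}}{(2j)!}\begin{pmatrix}x^{2j}\\0\end{pmatrix} + \sum_{j=0}^{\infty}\frac{(-1)^j\lambda^{2j+1}}{(2j+1)!}\begin{pmatrix}0\\x^{2j+1}\end{pmatrix},
\]
with an analogous decomposition of $(-\sin\lambda x,\cos\lambda x)^T$ into monomial vectors $(x^{2j+1},0)^T$ and $(0,x^{2j})^T$ (the first entering with a minus sign). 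I would then apply $T$ to both sides term by term.

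The termwise application of $T$ is justified because, by Theorem \ref{Th DiracTransOpe}, $T=I+\mathcal{K}$ where $\mathcal{K}$ is a Volterra integral operator with continuous matrix kernel on the bounded domain $\Omega$, and hence $T$ is a bounded linear operator on $C([-b,b],\mathbb{C}^2)$. The truncated Taylor sums for $\cos\lambda x$ and $\sin\lambda x$ converge to their limits uniformly on $[-b,b]$ for every fixed $\lambda$, so continuity of $T$ permits interchange of $T$ with the infinite sum. This yields two identities of entire functions of $\lambda$ (with coefficients in $C([-b,b],\mathbb{C}^2)$): one expressing $T[(\cos\lambda x,\sin\lambda x)^T]$ as a series involving $T[(x^{2j},0)^T]$ and $T[(0,x^{2j+1})^T]$, and a corresponding one for $T[(-\sin\lambda x,\cos\lambda x)^T]$ involving $T[(x^{2j+1},0)^T]$ and $T[(0,x^{2j})^T]$.

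The right-hand sides of \eqref{Eq1 SPPSandTransOpe} and \eqref{Eq2 SPPSandTransOpe} are themselves power series in $\lambda$ with coefficients $\tfrac{g(0)}{k!}(f\widetilde{X}^{(k)},g\widetilde{Y}^{(k)})^T$ and $\tfrac{f(0)}{k!}(fX^{(k)},gY^{(k)})^T$, respectively. By the uniqueness of Taylor coefficients of analytic vector-valued functions of $\lambda$, the coefficients of each $\lambda^k$ on the two sides must agree. Matching the even- and odd-index coefficients from \eqref{Eq1 SPPSandTransOpe} gives the formulas for $T[(x^{2j},0)^T]$ and $T[(0,x^{2j+1})^T]$, while doing the same for \eqref{Eq2 SPPSandTransOpe} gives $T[(0,x^{2j})^T]$ and $T[(x^{2j+1},0)^T]$ (with the minus sign from $-\sin\lambda x$ absorbed into the factor $(-1)^{(k+1)/2}$). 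Collecting the resulting factorials $1/(2j)!$ and $1/(2j+1)!$ against the corresponding factors in the SPPS series cancels the $k!$, leaving precisely the signs, the scalar normalizations $f(0)$ or $g(0)$, and the SPPS functions that appear in the definitions \eqref{Phisubk} and \eqref{Psisubk} of $\Phi_k$ and $\Psi_k$.

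The only real subtlety is the interchange of $T$ with the infinite sum, and this is immediate once we record that $T$ is bounded on $C([-b,b],\mathbb{C}^2)$; everything else is bookkeeping of the form $(-1)^j$ versus $(-1)^{(k\pm 1)/2}$ when $k$ is written in terms of its parity. No new analytical estimate is required beyond what is already implicit in Theorems \ref{Th DiracTransOpe} and \ref{Th DiracSPPS}.
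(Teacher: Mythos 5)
Your proposal is correct and follows essentially the same route as the paper: the paper likewise derives the mapping formulas by expanding $\cos\lambda x$ and $\sin\lambda x$ as power series in $\lambda$ and comparing coefficients of like powers of $\lambda$ in \eqref{Eq1 SPPSandTransOpe} and \eqref{Eq2 SPPSandTransOpe}, invoking analyticity of the solutions in the spectral parameter. The only addition you make is the explicit justification of interchanging $T$ with the infinite sum via boundedness of the Volterra operator on $C([-b,b],\mathbb{C}^{2})$, a point the paper leaves implicit.
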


\section{Generalized wave polynomials: a complete system of solutions of \eqref{Eq IntKernel}}\label{Sect4}

Following ideas from \cite[Section 4]{KKTT} let us first consider the simplest hyperbolic equation of the form \eqref{Eq IntKernel} having $Q\equiv 0$,
\begin{equation}\label{Eq IntKernel0}
Bk_{x}(x,t)+k_{t}(x,t)B =0.
\end{equation}

It is easy to see that having a solution $k$ of equation \eqref{Eq IntKernel0} in the square $[-b,b]\times [-b,b]$, the function
\[
\tilde K = T[k]
\]
is a solution of \eqref{Eq IntKernel} in the same square $[-b,b]\times [-b,b]$.
Here the operator $T$ acts with respect to the variable $x$, i.e.,
\[
\tilde K(x,y) = T[k](x,y) = k(x,y) + \int_{-x}^x K(x,t) k(t,y)\,dt.
\]
Let us study first the solutions of \eqref{Eq IntKernel0}.

Let $\mathcal{H}=AC([-b,b],\mathcal{M}_2)$, i.e., is the space of absolutely continuous $2\times 2$ matrix-valued functions.
Consider the operators $\mathcal{P}^{+}$ and $\mathcal{P}^{-}$ acting on $\mathcal{H}$ as follows
\begin{equation}\label{Eq Projectors}
\mathcal{P}^{-}\left[A\right]:=\dfrac{1}{2}\left(A-BAB\right)\quad\text{and}\quad \mathcal{P}^{+}\left[A\right]:=\dfrac{1}{2}\left(A+BAB\right),
\end{equation}
where $A\in\mathcal H$ and $B$ is the matrix in \eqref{Eq DefMatricesBQ}.

The operators $\mathcal{P}^{-}$ and $\mathcal{P}^{+}$ are projectors and decompose the space  $\mathcal{H}$ in the direct sum of the spaces $\mathcal{H}^{-}:=\ker \mathcal{P}^{-}$ and $\mathcal{H}^{+}:=\ker \mathcal{P}^{+}$. $\mathcal{H}^{-}$ corresponds to the subspace of matrix-valued functions that anti-commute with $B$ and $\mathcal{H}^{+}$ corresponds to the subspace of matrix-valued functions that commute with $B$ . It should be noted that $-B^{2}$ equals the identity matrix and $Q\in \mathcal{H}^{-}$. 

Note that the projectors $\mathcal{P}^{+}$ and $\mathcal{P}^{-}$ can be applied to any $2\times 2$ matrix valued function as well. Later in the paper, for $X$ being a space of matrix valued functions, we will use the notations $X^+$ and $X^-$ for the subspaces of functions commuting and anti-commuting with $B$, respectively.

\begin{proposition}\label{lm GenSolForIntKernelEq0} The general solution of  equation \eqref{Eq IntKernel0} has the following form
\begin{equation}\label{Eq1 GenSolForIntKernelEq0}
k(x,t)=\mathcal{P}^{+}[h_{1}]\left(\frac{x+t}{2}\right)+\mathcal{P}^{-}[h_{2}]\left(\frac{x-t}{2}\right),
\end{equation}
where $h_{1}$ and $h_{2}$ are arbitrary absolutely continuous functions in $\mathcal{H}$.
\end{proposition}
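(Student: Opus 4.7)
The plan is to decompose $k$ via the projectors $\mathcal P^\pm$ into its commuting part $k^c:=\mathcal P^-[k]$ and anti-commuting part $k^a:=\mathcal P^+[k]$ with respect to $B$, and to show that under this splitting the equation \eqref{Eq IntKernel0} decouples into a pair of elementary first-order transport equations that can be integrated directly.

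First I would verify, using $B^2=-I$, that $\mathcal P^\pm$ are complementary projectors giving $k=k^c+k^a$, and that left multiplication by $B$ preserves each of the subspaces $\mathcal H^\pm$. Since the projectors are linear with constant coefficients they commute with $\partial_x$ and $\partial_t$, so $k^c$ and $k^a$ inherit the regularity of $k$. Substituting $k=k^c+k^a$ into $Bk_x+k_tB=0$ and using the identities $k^c_tB=Bk^c_t$ and $k^a_tB=-Bk^a_t$ yields
$$
B(k^c_x+k^c_t)+B(k^a_x-k^a_t)=0.
$$
The first summand lies in $\mathcal H^-$ and the second in $\mathcal H^+$, so the direct sum decomposition $\mathcal H=\mathcal H^-\oplus\mathcal H^+$ forces each of them to vanish. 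Since $B$ is invertible, this is equivalent to the decoupled system
$$
k^c_x+k^c_t=0,\qquad k^a_x-k^a_t=0.
$$

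Second, the method of characteristics gives the general AC solutions $k^c(x,t)=\varphi(x-t)$ and $k^a(x,t)=\psi(x+t)$, where $\varphi$ is AC with values in the commuting subspace $\mathcal H^-$ and $\psi$ is AC with values in the anti-commuting subspace $\mathcal H^+$. Setting $h_2(s):=\varphi(2s)$ (which is itself commuting, so $\mathcal P^-[h_2]=h_2$) and $h_1(s):=\psi(2s)$ (which is anti-commuting, so $\mathcal P^+[h_1]=h_1$), one immediately has $\varphi(x-t)=\mathcal P^-[h_2]((x-t)/2)$ and $\psi(x+t)=\mathcal P^+[h_1]((x+t)/2)$, producing the claimed form \eqref{Eq1 GenSolForIntKernelEq0}. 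The converse direction, that any $k$ of this form solves \eqref{Eq IntKernel0}, is a direct chain-rule computation using that $\mathcal P^+[h_1]$ anti-commutes with $B$ and $\mathcal P^-[h_2]$ commutes with $B$.

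The only genuinely delicate step is the decoupling: one must confirm that $B\mathcal H^\pm\subseteq\mathcal H^\pm$ so that the two pieces of $B(k^c_x+k^c_t)+B(k^a_x-k^a_t)=0$ live in complementary subspaces and must vanish separately. Both inclusions are immediate consequences of $B^2=-I$. Once this is in hand, the remainder of the argument is routine transport-equation theory together with a trivial rescaling of the argument $s\mapsto 2s$ to fit the normalization chosen in the statement.
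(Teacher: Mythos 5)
Your proof is correct and follows essentially the same route as the paper's: both rest on splitting $k$ by the projectors $\mathcal{P}^{\pm}$ into the parts commuting and anti-commuting with $B$, which reduces \eqref{Eq IntKernel0} to two transport equations whose solutions are functions of $x+t$ and $x-t$ (the paper first changes variables to $\xi=(x+t)/2$, $\eta=(x-t)/2$ and then projects, while you project first and then integrate along characteristics --- a purely cosmetic difference). One small caveat: in the paper's conventions $\mathcal{H}^{-}=\ker\mathcal{P}^{-}$ is the \emph{anti-commuting} subspace and $\mathcal{H}^{+}=\ker\mathcal{P}^{+}$ the commuting one, so your $\pm$ labels for the two subspaces are swapped relative to the paper's; this does not affect the validity of your argument, since you consistently and correctly identify which part commutes and which anti-commutes with $B$.
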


\begin{proof}
An easy computation shows that the right hand side of \eqref{Eq1 GenSolForIntKernelEq0} satisfy \eqref{Eq IntKernel0}. On the other hand, let $k$ be a solution of equation \eqref{Eq IntKernel0}. Define $h(\xi(x,t),\eta(x,t))=k(x,t)$ via the change of coordinates given by $\xi=(x+t)/2$ and $\eta=(x-t)/2$. It follows that $2k_x=(h_\xi+h_\eta)$ and $2k_t=(h_\xi-h_\eta)$. Substituting these into \eqref{Eq IntKernel0} yields $\mathcal{P}^{-}\left[h_{\xi}\right](\xi,\eta)+\mathcal{P}^{+}\left[h_{\eta}\right](\xi,\eta)=0$. Applying the projectors $\mathcal{P}^{+}$ and $\mathcal{P}^{-}$ we have $\mathcal{P}^{+}\left[h_{\eta}\right](\xi,\eta)=0$ and $\mathcal{P}^{-}\left[h_{\xi}\right](\xi,\eta)=0$. From the last equalities, integrating with respect to the variables $\eta$ and $\xi$ we obtain that $\mathcal{P}^{+}\left[h\right](\xi,\eta)=c_{1}(\xi)$ and $\mathcal{P}^{-}\left[h\right](\xi,\eta)=c_{2}(\eta)$ for some $c_{1}\in\mathcal{H}^{-}$ and $c_{2}\in\mathcal{H}^{+}$. Thus,
\[
k(x,t)=\mathcal{P}^{+}\left[h\right](\xi,\eta)+\mathcal{P}^{-}\left[h\right](\xi,\eta)= c_{1}\left(\frac{x+t}{2}\right)+c_{2}\left(\frac{x-t}{2}\right).
\]
Finally, we have $c_{1}=\mathcal{P}^{+}\left[h_{1}\right]$ and $c_{2}=\mathcal{P}^{-}\left[h_{2}\right]$, for some $h_{1},h_{2}\in \mathcal{H}$, because the image of $\mathcal{P}^{+}$ is $\mathcal{H}^{-}$ and the image of $\mathcal{P}^{-}$ is $\mathcal{H}^{+}$, which completes the proof.
\end{proof}

\begin{remark} Since $\mathcal{H}^{-}=\ker \mathcal{P}^{-}$ and $\mathcal{H}^{+}=\ker \mathcal{P}^{+}$ we can consider as a general solution of equation \eqref{Eq IntKernel0} the formula
\begin{equation}\label{Eq2 GenSolForIntKernelEq0}
k(x,t)=\mathcal{P}^{+}[h]\left(x+t\right)+\mathcal{P}^{-}[h]\left(x-t\right),
\end{equation}
where $h$ is arbitrary absolutely continuous matrix-valued function. We observe that if we substitute $h(x)=\mathcal{P}^{+}h_{1}(x/2)+\mathcal{P}^{-}h_{2}(x/2)$ into \eqref{Eq2 GenSolForIntKernelEq0} we recover \eqref{Eq1 GenSolForIntKernelEq0}.
\end{remark}

We introduce the system of wave matrices $\left\{P_{m}^{i}, i=1,2,3,4\right\}_{m=0}^{\infty}$  as a result of applying  formula \eqref{Eq2 GenSolForIntKernelEq0} to four matrix-valued functions
\[
\begin{pmatrix}{x^{m}}&{0}\\{0}&{0}\end{pmatrix},\,\begin{pmatrix}{0}&{x^{m}}\\{0}&{0}\end{pmatrix},\,\begin{pmatrix}{0}&{0}\\{x^{m}}&{0}\end{pmatrix},\,\begin{pmatrix}{0}&{0}\\{0}&{x^{m}}\end{pmatrix},\quad m=0,1,2,\ldots
\]
Specifically,
\begin{align}
\displaybreak[2]
P_{m}^{1}(x,t)&=\begin{pmatrix}{p_{2m-1}(x,t)}&{0}\\{0}&{-p_{2m}(x,t)}\end{pmatrix}, &
P_{m}^{2}(x,t)&=\begin{pmatrix}{0}&{p_{2m-1}(x,t)}\\{p_{2m}(x,t)}&{0} \end{pmatrix},\notag\\
P_{m}^{3}(x,t)&=\begin{pmatrix}{0}&{p_{2m}(x,t)}\\{p_{2m-1}(x,t)}&{0} \end{pmatrix}, &
P_{m}^{4}(x,t)&=\begin{pmatrix}{-p_{2m}(x,t)}&{0}\\{0}&{p_{2m-1}(x,t)} \end{pmatrix},\notag
\end{align}
where
\begin{equation}\label{Eq wavepolinomials}
p_{-1}\equiv 0,\quad p_{0}\equiv 1, \quad
p_{2m-1}(x,t)=\sum_{\text{even}\,k=0}^{m}\binom{m}{k}x^{m-k}t^{k}, \quad
p_{2m}(x,t)=\sum_{\text{odd}\, k=1}^{m}\binom{m}{k}x^{m-k}t^{k}.
\end{equation}

\begin{remark}\label{Rk WavePpolynomials} Each of the wave matrix not only satisfies  equation \eqref{Eq IntKernel0} but also the wave-matrix equation $\partial_{t}^{2}K(x,t)=\partial_{x}^{2}K(x,t)$. In addition, the functions in \eqref{Eq wavepolinomials} are known as wave polynomials, the fact that they arise here does not cause us any surprise since the wave polynomials form a complete system of solutions of the wave equation with respect to the maximum norm, see \cite{KKTT} for more details.
\end{remark}

Due to Proposition \ref{lm GenSolForIntKernelEq0} and the Weierstrass Approximation theorem, we establish that the wave matrices form a complete system of solutions for equation \eqref{Eq IntKernel0}.

\begin{proposition}\label{Pp CompletenessForIntKernel0} Let $k$ be a solution of equation \eqref{Eq IntKernel0}. Given any $\varepsilon>0$, there exists a linear combination of wave matrices in the form
\begin{equation*}
k_{m}(x,t)=\sum_{n=0}^{m}\Bigl(a_{n}P_{n}^{1}(x,t)+b_{n}P_{n}^{2}(x,t)+c_{n}P_{n}^{3}(x,t)+d_{n}P_{n}^{4}(x,t)\Bigr)
\end{equation*}
such that for every $(x,t)\in \left[-b,b\right]\times \left[-b,b\right]$,
\[
\left|k(x,t)-k_{m}(x,t)\right|<\varepsilon.
\]
\end{proposition}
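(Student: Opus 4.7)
The plan is to combine the representation \eqref{Eq2 GenSolForIntKernelEq0} with the classical Weierstrass approximation theorem. By that representation, every solution $k$ of \eqref{Eq IntKernel0} on $[-b,b]\times[-b,b]$ has the form
\[
k(x,t) = \mathcal{P}^{+}[h](x+t) + \mathcal{P}^{-}[h](x-t)
\]
for some absolutely continuous matrix-valued function $h$ on $[-2b,2b]$ (the range of $x\pm t$). Since the projectors $\mathcal{P}^{\pm}$ are bounded on the finite-dimensional space $\mathcal{M}_{2}$, approximating $k$ uniformly on $[-b,b]^{2}$ reduces to approximating $h$ uniformly on $[-2b,2b]$ by a matrix-valued polynomial.

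Given $\varepsilon>0$, applying the Weierstrass theorem entrywise produces a polynomial $H(y)=\sum_{n=0}^{N}A_{n}y^{n}$ with $A_{n}\in\mathcal{M}_{2}$ for which the function $k_{N}(x,t):=\mathcal{P}^{+}[H](x+t)+\mathcal{P}^{-}[H](x-t)$ satisfies $|k(x,t)-k_{N}(x,t)|<\varepsilon$ on $[-b,b]^{2}$, by the triangle inequality. To exhibit $k_{N}$ as a linear combination of the wave matrices, I would expand each $A_{n}$ in the standard basis $\{E_{11},E_{12},E_{21},E_{22}\}$ of $\mathcal{M}_{2}$. This reduces $k_{N}$ to a sum of ``elementary'' terms $\mathcal{P}^{+}[E_{ij}](x+t)^{n}+\mathcal{P}^{-}[E_{ij}](x-t)^{n}$. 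A short direct calculation of $\mathcal{P}^{\pm}[E_{ij}]$, using only $B^{2}=-I$ together with the identities $p_{2n-1}=\tfrac{1}{2}((x+t)^{n}+(x-t)^{n})$ and $p_{2n}=\tfrac{1}{2}((x+t)^{n}-(x-t)^{n})$ valid for $n\ge 1$, shows that each of these elementary terms is precisely one of $P_{n}^{1},P_{n}^{2},P_{n}^{3},P_{n}^{4}$, under the natural correspondence $E_{11}\leftrightarrow P^{1}$, $E_{12}\leftrightarrow P^{2}$, $E_{21}\leftrightarrow P^{3}$, $E_{22}\leftrightarrow P^{4}$. The case $n=0$ is slightly degenerate because of the convention $p_{-1}\equiv 0$, but the span of $\{P_{0}^{1},\ldots,P_{0}^{4}\}$ coincides with the full space of constant $2\times 2$ matrices, so $k_{N}$ remains a linear combination of wave matrices of the required form.

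The main obstacle is really only this bookkeeping in the final step: one must compute the eight matrices $\mathcal{P}^{\pm}[E_{ij}]$ and carefully match the resulting expressions against the $P_{n}^{i}$, paying attention to signs and to the anomalous $n=0$ contribution. Everything else follows from standard density of polynomials in $C[-2b,2b]$ together with the boundedness of the projectors $\mathcal{P}^{\pm}$ on $\mathcal{M}_{2}$ already established in the definition preceding Proposition \ref{lm GenSolForIntKernelEq0}.
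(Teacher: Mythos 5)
Your proof is correct and takes essentially the same route as the paper, which compresses the whole argument into the single sentence ``Due to Proposition \ref{lm GenSolForIntKernelEq0} and the Weierstrass Approximation theorem\dots'': represent $k$ via \eqref{Eq2 GenSolForIntKernelEq0}, approximate $h$ uniformly by a matrix polynomial, and identify the resulting elementary terms with the $P_{n}^{i}$. Your explicit computation matching $\mathcal{P}^{\pm}[E_{ij}]$ to the four wave matrices, and your observation that the $n=0$ case is degenerate but harmless because $P_{0}^{1},\dots,P_{0}^{4}$ span all constant matrices, is precisely the bookkeeping the paper leaves implicit.
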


Having disposed of this preliminary step, we are in position to introduce a complete system of solutions for equation \eqref{Eq IntKernel}. Recall that the transmutation operator $T$ in \eqref{Eq VolIntOperator} acts on vector-valued functions of one real variable, however in a natural way $T$ acts on $2\times 2$ matrix-valued functions, acting by column with respect to the variable $x$ for each fixed $t$.

\begin{definition}[Generalized Wave Matrices]\label{Df GWaveMatrices} We introduce the following matrix-valued functions being the images under the transmutation operator $T$ of the wave matrices $P_{m}^{i}$.
\begin{equation}\label{GWaveMatrices}
\begin{aligned}
& \mathcal O_{m}^{1}(x,t)=\begin{bmatrix}{\mathcal U_{2m-1}(x,t)}&{-\mathcal V_{2m}(x,t)}\end{bmatrix}, & \mathcal O_{m}^{2}(x,t)&=\begin{bmatrix}{\mathcal V_{2m}(x,t)}&{\mathcal U_{2m-1}(x,t)}\end{bmatrix},\\
& \mathcal O_{m}^{3}(x,t)=\begin{bmatrix}{\mathcal V_{2m-1}(x,t)}&{\mathcal U_{2m}(x,t)}\end{bmatrix}, & \mathcal O_{m}^{4}(x,t)&=\begin{bmatrix}{-\mathcal U_{2m}(x,t)}&{\mathcal V_{2m-1}(x,t)}\end{bmatrix},
\end{aligned}
\end{equation}
where  $m\ge 0$ and the vector-valued functions $\mathcal U_{2m-1}$, $\mathcal U_{2m}$, $\mathcal V_{2m-1}$ and $\mathcal V_{2m}$ are given by
\begin{align}
\mathcal U_{2m-1}(x,t)&=\sum_{\text{even}\, k=0}^{m}\binom{m}{k}\Phi_{m-k}(x)t^{k}, & \mathcal U_{2m}(x,t)=\sum_{\text{odd}\, k=1}^{m}\binom{m}{k}\Phi_{m-k}(x)t^{k},\label{Df GWaveMatricesUn}\\
\mathcal V_{2m-1}(x,t)&=\sum_{\text{even}\, k=0}^{m}\binom{m}{k}\Psi_{m-k}(x)t^{k}, &\mathcal V_{2m}(x,t)=\sum_{\text{odd}\, k=1}^{m}\binom{m}{k}\Psi_{m-k}(x)t^{k}.\label{Df GWaveMatricesVn}
\end{align}
\end{definition}

In what follows we are interested in linear combinations of the form
\begin{equation}\label{Eq ApproxIntKernelbyK_n}
K_{N}(x,t)=\sum_{n=0}^{N}
\Bigl(
a_{n}\mathcal O_{n}^{1}(x,t) +b_{n}\mathcal O_{n}^{2}(x,t)+c_{n}\mathcal O_{n}^{3}(x,t)+d_{n}\mathcal O_{n}^{4}(x,t)
\Bigr).
\end{equation}
Let us collect the coefficients $\left\{a_{n}, b_{n}, c_{n}, d_{n}\right\}$ in a $2\times 2$ matrix as follows
\begin{equation}\label{eq coefficientsmatrix}
C_{n}=\begin{pmatrix}{a_{n}}&{b_{n}}\\
                    {c_{n}}&{d_{n}}
			\end{pmatrix},
			\qquad
			n=0,\ldots,N.
\end{equation}
Observe that each generalized wave matrix $\mathcal{O}_{n}^{i}$, $i=1\ldots 4$, contains terms with powers of $t$ whose degree is less than or equal to $n$. Hence a linear combination of wave matrices in \eqref{Eq ApproxIntKernelbyK_n} is actually a matrix-valued polynomial function in the variable $t$. From a long but simple procedure we obtain the lemma below.

\begin{lemma}\label{Lm KnAsMatrixPolyFunction} Let $K_{N}$ be a linear combination of generalized wave matrices of the form \eqref{Eq ApproxIntKernelbyK_n}. For each fixed $x\in \left[0,b\right]$, $K_{N}$ is a polynomial in the variable $t$  whose degree is less than or equal to $N$. To be more precise,
\begin{equation}\label{Eq KnWithKnEvenKnOdd}
K_{N}(x,t)=\sum_{n=0}^N\mathcal{K}_{n}(x)t^{n},
\end{equation}
where
\begin{align}
\mathcal{K}_{2n}(x)&=\sum_{k=0}^{N-2n}\binom{2n+k}{2n}\begin{bmatrix}{\Phi_{k}(x)}&{\Psi_{k}(x)}\end{bmatrix}C_{2n+k},\label{Eq KnEvenPowert}\\
\mathcal{K}_{2n+1}(x)&=\sum_{k=0}^{N-2n-1}\binom{2n+1+k}{2n+1}\begin{bmatrix}{\Phi_{k}(x)}&{\Psi_{k}(x)}\end{bmatrix}BC_{2n+1+k}B.\label{Eq KnOddPowert}
\end{align}
\end{lemma}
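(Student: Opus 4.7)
The plan is to carry out a direct bookkeeping computation: substitute the definitions \eqref{GWaveMatrices}--\eqref{Df GWaveMatricesVn} into \eqref{Eq ApproxIntKernelbyK_n}, interchange the order of summation so that powers of $t$ are grouped together, and then assemble the resulting scalar coefficients in front of $\Phi_k(x)$ and $\Psi_k(x)$ into the $2\times 2$ matrix pattern $[\Phi_k\mid\Psi_k]\,C_{j}$ claimed in \eqref{Eq KnEvenPowert}--\eqref{Eq KnOddPowert}. No analysis is needed; the lemma is a purely algebraic identity.

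The first observation is a parity split. By \eqref{Df GWaveMatricesUn}--\eqref{Df GWaveMatricesVn} the functions $\mathcal U_{2m-1}$ and $\mathcal V_{2m-1}$ are polynomials in $t$ containing only even powers, whereas $\mathcal U_{2m}$ and $\mathcal V_{2m}$ contain only odd powers. Hence the coefficient of $t^{2n}$ in $K_N$ receives contributions only from $\mathcal U_{2m-1}$ and $\mathcal V_{2m-1}$, i.e.\ from $a_m\mathcal O_m^1$ and $c_m\mathcal O_m^3$ (first column) together with $b_m\mathcal O_m^2$ and $d_m\mathcal O_m^4$ (second column). Re-indexing with $k=m-2n$ and pulling out the binomial factor $\binom{2n+k}{2n}$, the first column of the coefficient becomes $a_{2n+k}\Phi_k+c_{2n+k}\Psi_k$ and the second column becomes $b_{2n+k}\Phi_k+d_{2n+k}\Psi_k$. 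Writing these two columns side by side gives exactly $[\Phi_k\mid\Psi_k]\,C_{2n+k}$, which is \eqref{Eq KnEvenPowert}.

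For the coefficient of $t^{2n+1}$ only $\mathcal U_{2m}$ and $\mathcal V_{2m}$ contribute, but with signs and column positions differing from the even case: in the first column one collects $-d_m\Phi_{m-2n-1}+b_m\Psi_{m-2n-1}$, and in the second column $c_m\Phi_{m-2n-1}-a_m\Psi_{m-2n-1}$. The key remaining step is to recognise this permuted, sign-changed matrix as $[\Phi_k\mid\Psi_k]\,BC_{2n+1+k}B$. A one-line check with $B=\bigl(\begin{smallmatrix}0&1\\-1&0\end{smallmatrix}\bigr)$ shows
\[
B\begin{pmatrix}a&b\\c&d\end{pmatrix}B=\begin{pmatrix}-d&c\\b&-a\end{pmatrix},
\]
which is exactly the pattern just obtained, proving \eqref{Eq KnOddPowert}.

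The main obstacle here is not conceptual but purely notational: one must keep careful track of the four different sign and column-swap conventions built into $\mathcal O_m^{1},\ldots,\mathcal O_m^{4}$ in \eqref{GWaveMatrices}, and then recognise that the conjugation $C\mapsto BCB$ is precisely the involution that encodes the twist introduced by passing from even to odd powers of $t$. Once this involution is identified, the formulas fall out and the polynomial nature of $K_N$ in $t$ with degree at most $N$ is immediate from the upper bound $k\le m\le N$ in \eqref{Df GWaveMatricesUn}--\eqref{Df GWaveMatricesVn}.
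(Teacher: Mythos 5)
Your computation is correct, and it is exactly the ``long but simple procedure'' the paper invokes without writing out: expand \eqref{Eq ApproxIntKernelbyK_n} via \eqref{GWaveMatrices}--\eqref{Df GWaveMatricesVn}, split by the parity of the power of $t$, re-index with $k=m-2n$ (resp.\ $k=m-2n-1$), and recognize the column pattern as right multiplication by $C_{2n+k}$ (resp.\ by $BC_{2n+1+k}B$, using $BCB=\bigl(\begin{smallmatrix}-d&c\\ b&-a\end{smallmatrix}\bigr)$). All signs and column placements in your bookkeeping check out against the definitions, so the proof is complete and matches the paper's intended argument.
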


Due to the transmutation property the generalized wave matrices satisfy the integral kernel equation \eqref{Eq IntKernel}. In addition,  based on the properties of operators $T$, $T^{-1}$ and Proposition \ref{Pp CompletenessForIntKernel0} we obtain that the generalized wave matrices are a complete system of solutions for  equation  \eqref{Eq IntKernel} in the square $[-b,b]\times[-b,b]$. However the integral kernel $K$ is the solution of \eqref{Eq IntKernel} only in the region $\Omega^+$, see \eqref{Omega pm} (and in $\Omega^-$ if we consider extension of the potential $Q$ onto $[-b,b]$), and is not defined in the whole square $(x,t)\in [-b,b]\times[-b,b]$. It is possible to continue $K$ as a solution of \eqref{Eq IntKernel} onto the whole square $[-b,b]\times[-b,b]$ similarly to \cite{KrT2012}, we left the details for the reader. In the present paper we give the proof for stronger result considering solutions of \eqref{Eq IntKernel} in the domain $\Omega^+$ only, without any need for continuation.

\begin{theorem}\label{Th CompletenessForIntKernelEq}
The system of matrix-valued functions $\left\{\mathcal O_{m}^{i},\, i=1,2,3,4\right\}_{m=0}^{\infty}$ is a complete system of solutions of the equation
\begin{equation}\label{Eq ThCompletenessForIntKernel}
BK_{x}(x,t)+K_{t}(x,t)B=-Q(x)K(x,t)
\end{equation}
in $\Omega^{+}$. That is, let $\tilde K$ be a solution of \eqref{Eq ThCompletenessForIntKernel}. Then for any $\varepsilon>0$ there exist a constant $N$ and coefficients $\{a_n, b_n, c_n,d_n\}_{n=0}^N$ such that
\begin{equation}\label{Completeness of O}
\sup_{(x,t)\in\Omega^+} \left|\tilde K(x,t) - \sum_{n=0}^{N}
\Bigl(
a_{n}\mathcal O_{n}^{1}(x,t) +b_{n}\mathcal O_{n}^{2}(x,t)+c_{n}\mathcal O_{n}^{3}(x,t)+d_{n}\mathcal O_{n}^{4}(x,t)
\Bigr)\right|<\varepsilon.
\end{equation}
\end{theorem}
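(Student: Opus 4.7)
The strategy is to reduce the statement to Proposition~\ref{Pp CompletenessForIntKernel0} via the transmutation operator~$T$, using the identity $\mathcal O_m^i = T[P_m^i]$ from Definition~\ref{Df GWaveMatrices}. Every linear combination $\sum c_{n,i}\mathcal O_n^i$ then equals $T$ applied to the corresponding combination $\sum c_{n,i}P_n^i$ of wave matrices, so uniform approximation at the level of the free equation~\eqref{Eq IntKernel0} transfers to uniform approximation at the level of~\eqref{Eq ThCompletenessForIntKernel} through continuity of~$T$. Since $T$ and $T^{-1}$ are naturally defined on $C([-b,b],\mathbb C^2)$, the plan is to lift the problem from $\Omega^+$ to the full square $[-b,b]\times[-b,b]$, carry out the approximation there, and restrict back.

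I would first extend $Q$ from $[0,b]$ to $[-b,b]$ as a continuously differentiable matrix-valued function (for instance by even reflection $Q(-x):=Q(x)$). Theorem~\ref{Th DiracTransOpe} then provides a transmutation operator $T$ that is a continuous bijection on $C([-b,b],\mathbb C^2)$, with continuous inverse $T^{-1}$; both extend columnwise in~$x$ to continuous operators in the sup norm on matrix-valued functions of two variables, since the corresponding Volterra kernels are bounded on the compact set~$\Omega$. Next, I would extend the given continuous solution~$\tilde K$ from $\Omega^+$ to a continuous solution $\bar K$ of~\eqref{Eq ThCompletenessForIntKernel} on $[-b,b]\times[-b,b]$ by solving Goursat problems in the three complementary triangular subregions of the square: in each, the Goursat data on the shared characteristic boundary $\{t=\pm x\}$ are inherited from~$\tilde K$, while arbitrary continuous data (agreeing with $\tilde K(0,0)$ at the shared vertex) are prescribed on the outer characteristic boundary. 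Appendix~\ref{Append1} supplies existence and uniqueness of each auxiliary solution, and uniqueness forces continuity of $\bar K$ across the matching lines $t=\pm x$ as well as the identity $\bar K|_{\Omega^+}=\tilde K$.

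Setting $k:=T^{-1}[\bar K]$, a direct verification using the intertwining $\mathcal A_Q T = T\mathcal A_0$ from Definition~\ref{DefTransOpe} together with the observation that $T$ acts columnwise and therefore commutes with right multiplication by the constant matrix~$B$ applied to $\partial_t$ shows that $k$ is a continuous solution of~\eqref{Eq IntKernel0} on $[-b,b]\times[-b,b]$. Proposition~\ref{Pp CompletenessForIntKernel0} now produces coefficients for which $k_N = \sum_{n=0}^N(a_nP_n^1 + b_nP_n^2 + c_nP_n^3 + d_nP_n^4)$ approximates $k$ uniformly on $[-b,b]^2$ to any prescribed precision. Applying~$T$ and invoking $\mathcal O_n^i = T[P_n^i]$ yields $K_N := T[k_N] = \sum_{n=0}^N(a_n\mathcal O_n^1 + b_n\mathcal O_n^2 + c_n\mathcal O_n^3 + d_n\mathcal O_n^4)$, and continuity of~$T$ gives $\sup_{[-b,b]^2}|\bar K - K_N| \le \|T\|\,\sup_{[-b,b]^2}|k-k_N| < \varepsilon$. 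Restricting to~$\Omega^+$ and using $\bar K|_{\Omega^+}=\tilde K$ delivers~\eqref{Completeness of O}.

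The delicate step is the extension of~$\tilde K$ from $\Omega^+$ to the full square as a continuous solution of~\eqref{Eq ThCompletenessForIntKernel}: one must solve Goursat problems in the three complementary subregions and verify continuity of the glued function $\bar K$ across the characteristic lines $t=\pm x$, which rests on the uniqueness part of the Goursat well-posedness proved in Appendix~\ref{Append1}. A secondary technical point is the continuity of~$T$ and~$T^{-1}$ with respect to the sup norm on matrix-valued functions of two variables, which follows from boundedness of their Volterra kernels on~$\Omega$.
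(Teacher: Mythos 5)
Your route is genuinely different from the paper's. The paper deliberately avoids continuing $\tilde K$ beyond $\Omega^{+}$ (it remarks just before the theorem that a continuation onto the whole square ``similarly to \cite{KrT2012}'' is possible, but proves the result ``without any need for continuation''): it restricts $\tilde K$ to the line $x=b$, approximates the continuous function $\tilde K(b,\cdot)$ by a matrix polynomial $P_N$ via Weierstrass, uses the well-posedness of the backward Cauchy problem \eqref{Pb CauchyPblemEqForK} (Appendix \ref{Append4}) to propagate the error bound from $\{x=b\}$ to all of $\Omega^{+}$, and then shows by descending induction on the degree --- using the invertibility of $[\Phi_{0}(b)\ \Psi_{0}(b)]$ as in \eqref{Eq1} --- that every polynomial Cauchy datum at $x=b$ is matched exactly by a linear combination of the $\mathcal O_{n}^{i}$. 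That argument needs neither an extension of $Q$ or of $\tilde K$ nor any property of $T^{-1}$. Your plan (pull back by $T^{-1}$ to the free equation \eqref{Eq IntKernel0}, apply Proposition \ref{Pp CompletenessForIntKernel0}, push forward by $T$) is the natural alternative, but it shifts all the work into the extension of $\tilde K$ to the whole square, and that is where your argument has a genuine gap.

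For the first-order system \eqref{Eq ThCompletenessForIntKernel} one cannot prescribe the full matrix trace of a solution on a characteristic: as the integral equation \eqref{Eq IntegralEquationH} shows, on $t=x$ only the projection $\mathcal P^{+}[K(x,x)]$ is free Goursat data (cf. \eqref{Eq GsatData+}), while $\mathcal P^{-}[K(x,x)]$ is determined by a transport equation along the characteristic; on $t=-x$ the roles of $\mathcal P^{\pm}$ are exchanged. Hence ``inheriting the Goursat data from $\tilde K$ on the shared boundary and prescribing arbitrary continuous data on the outer boundary'' is over-determined as stated, and ``uniqueness forces continuity of $\bar K$ across $t=\pm x$'' is not the right mechanism: continuity of the glued function requires verifying that the complementary, non-prescribable projections of the auxiliary solutions coincide with those of $\tilde K$ on the shared characteristics, which follows only from the transport identity along the characteristic together with matching the corner value at the origin --- an argument you do not supply. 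The geometry also needs care: the top and bottom triangles each share one characteristic side with $\Omega^{+}$ and one with $\Omega^{-}$, so there is no separate ``outer characteristic boundary'' and the order in which the three auxiliary problems are solved matters; moreover Appendix \ref{Append1} is formulated only for the configuration $\Xi^{+}$ with data of the specific form \eqref{Eq GsatData1H}--\eqref{Eq GsatData2H}, so it does not directly cover the problems you invoke. Finally, a ``direct verification'' by differentiation that $k=T^{-1}[\bar K]$ solves \eqref{Eq IntKernel0} presupposes classical regularity, whereas the theorem allows $\tilde K$ to be merely a continuous mild solution. None of these obstacles is fatal, but each requires a real argument that is missing, and the paper's proof is designed precisely to avoid all of them.
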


\begin{remark}It should be noted that $\tilde K$ can be a mild solution, i.e., to be a solution of an equivalent integral equation. We only need that $\tilde K$ is continuous in the region $\Omega^+$.
\end{remark}

\begin{proof}
Consider the values of the solution $\tilde K$ at $x=b$. It is a continuous matrix valued function of the variable $t$, hence by the Weierstrass approximation theorem there exists a constant $N$ and a $2\times 2$ matrix $P_N$ whose entries are polynomials of degree less or equal to $N$ such that
\begin{equation}\label{Estimate at xb}
\sup_{t\in[-b,b]}|\tilde K(b,t) - P_N(t)|\le \frac{\varepsilon}2 \exp\biggl(-\int_0^b |Q(s)|ds\biggr).
\end{equation}
Let
\begin{equation}\label{Eq ApproxPolyDevoreLorentz}
P_N(t)=\sum_{n=0}^{N}t^{n}\begin{pmatrix}{\tilde{a}_{n}}&{\tilde{b}_{n}}\\{\tilde{c}_{n}}&{\tilde{d}_{n}}\end{pmatrix}.
\end{equation}

Now we consider the following Cauchy problem for equation \eqref{Eq IntKernel} in the region $\Omega^+$
\begin{equation}\label{Pb CauchyPblemEqForK}
   \left\{
	\begin{aligned}
        BK_{x}(x,t)+K_{t}(x,t)B &=-Q(x)K(x,t),\\
        K(b,t)&= F(t),
    \end{aligned}
   \right.
\end{equation}
where $F\in C([-b,b],\mathcal{M}_2)$.
As it is shown in Appendix \ref{Append4}, this Cauchy problem is well-posed and its solution satisfies
\begin{equation}\label{EstimateCauchyProblem}
    \sup_{(x,t)\in\Omega^+} |K(x,t)| < 2\sup_{s\in [-b,b]}|F(s)| \cdot e^{b \int_0^b |Q(\tau)|d\tau}.
\end{equation}

Note that $\tilde K$ is the solution of Cauchy problem \eqref{Pb CauchyPblemEqForK} with $F(t)=\tilde K(b,t)$. Let $K_2$ denote the solution of Cauchy problem \eqref{Pb CauchyPblemEqForK} with $F=P_N$. The difference $\tilde K-K_2$ is the solution of Cauchy problem \eqref{Pb CauchyPblemEqForK} with $F(t) = \tilde K(b,t) - P_N(t)$. Then it follows from \eqref{Estimate at xb} and \eqref{EstimateCauchyProblem} that
\[
\sup_{(x,t)\in\Omega^+} |\tilde K(x,t)-K_2(x,t)|<\varepsilon,
\]
hence it is sufficient to show that $K_2$ is a linear combination of the generalized wave polynomials to finish the proof. For that we show that the equation
\begin{equation}\label{Eq for p}
P_N(t)=\sum_{n=0}^{N}
\Bigl(
a_{n}\mathcal O_{n}^{1}(b,t) +b_{n}\mathcal O_{n}^{2}(b,t)+c_{n}\mathcal O_{n}^{3}(b,t)+d_{n}\mathcal O_{n}^{4}(b,t)
\Bigr)
\end{equation}
possesses a solution $\{a_n, b_n, c_n,d_n\}_{n=0}^{N}$. Due to Lemma \ref{Lm KnAsMatrixPolyFunction}
\begin{equation*}
\sum_{n=0}^{N}
\Bigl(
a_{n}\mathcal O_{n}^{1}(b,t) +b_{n}\mathcal O_{n}^{2}(b,t)+c_{n}\mathcal O_{n}^{3}(b,t)+d_{n}\mathcal O_{n}^{4}(b,t)
\Bigr)=\sum_{n=0}^{N}\mathcal{K}_{n}t^{n},
\end{equation*}
where
\begin{align}
\displaybreak[2]
\mathcal{K}_{2n}&=\sum_{k=0}^{N-2n}\binom{2n+k}{2n}\begin{bmatrix}{\Phi_{k}(b)}&{\Psi_{k}(b)}\end{bmatrix}C_{2n+k},\label{Eq KnEvenPowert1}\\
\mathcal{K}_{2n+1}&=\sum_{k=0}^{N-2n-1}\binom{2n+1+k}{2n+1}\begin{bmatrix}{\Phi_{k}(b)}&{\Psi_{k}(b)}\end{bmatrix}BC_{2n+1+k}B.\label{Eq KnOddPowert1}
\end{align}
Suppose that $N$ is even. By equating the coefficients at $t^N$ from  \eqref{Eq KnEvenPowert1} and \eqref{Eq ApproxPolyDevoreLorentz} we obtain the following equation
\begin{equation}\label{Eq1}
\begin{pmatrix}{\tilde{a}_{N}}&{\tilde{b}_{N}}\\
               {\tilde{c}_{N}}&{\tilde{d}_{N}}
\end{pmatrix}=
\begin{bmatrix}
{\Phi_{0}(b)}&{\Psi_{0}(b)}
\end{bmatrix}
\begin{pmatrix}{a_{N}}&{b_{N}}\\
               {c_{N}}&{d_{N}}
\end{pmatrix}.
\end{equation}
Since the vectors  $\Phi_{0}(b)$, $\Psi_{0}(b)$ are linearly independent, the $2\times 2$ matrix $\begin{bmatrix}{\Phi_{0}(b)}&{\Psi_{0}(b)}\end{bmatrix}$ is invertible. Hence, in a unique way, we determine the coefficients $a_N$, $b_N$, $c_N$ and $d_N$ from \eqref{Eq1}. For an odd $N$ the procedure is similar, the only difference is that equation \eqref{Eq KnOddPowert1} is used.
Now we proceed by induction: by subtracting the terms corresponding to $N$ from \eqref{Eq for p}, we obtain the problem containing powers of $t$ of degree at most $N-1$ and similar reasoning works.

Consider the function
\begin{equation}\label{Eq3}
\sum_{n=0}^{N}
\Bigl(
a_{n}\mathcal O_{n}^{1}(x,t) +b_{n}\mathcal O_{n}^{2}(x,t)+c_{n}\mathcal O_{n}^{3}(x,t)+d_{n}\mathcal O_{n}^{4}(x,t)
\Bigr).
\end{equation}
As a linear combination of generalized wave polynomials, it is a solution of \eqref{Eq IntKernel}, and by construction satisfies \eqref{Eq for p}. Hence \eqref{Eq3} is the solution of the Cauchy problem \eqref{Pb CauchyPblemEqForK} and coincides with the function $K_2$.
\end{proof}

\section{Approximation of the integral kernel $K$}\label{Sect5}

Theorem \ref{Th CompletenessForIntKernelEq} guarantees the existence of coefficients $\left\{a_{n}, b_{n}, c_{n}, d_{n} \right\}_{n=0}^{N}\subseteq \mathbb C$ and a linear combination in the form \eqref{Eq ApproxIntKernelbyK_n}
approximating the integral kernel $K$.

To convert this existence result into a practical scheme for obtaining the coefficients  $\left\{a_{n}, b_{n}, c_{n}, d_{n} \right\}_{n=0}^{N}$ for any given potential matrix $Q$, we are going to utilize the Goursat conditions \eqref{Eq GsatDataX+}, \eqref{Eq GsatDataX-}. Restricting the inequality \eqref{Completeness of O} to the characteristic curves $t=x$ and $t=-x$ one can see that arbitrary close approximation of the Goursat data is always possible.
In Appendix \ref{Append1} we show that having sufficiently good approximation of the Goursat data on the characteristics curves $t=x$ and $t=-x$, a good approximation of the integral kernel $K$ in form \eqref{Eq ApproxIntKernelbyK_n} is guaranteed on the whole $\Omega^{+}$.

In this section we show that by considering the half-sum and half-difference of the Goursat conditions, the problem of obtaining coefficients $\left\{a_{n}, b_{n}, c_{n}, d_{n} \right\}_{n=0}^{N}$ from two conditions \eqref{Eq GsatDataX+}, \eqref{Eq GsatDataX-} can be reduced to the problem of obtaining the coefficients $\left\{a_{n}, b_{n}, c_{n}, d_{n} \right\}_{n=0}^{N}$ from only one condition, for which the least squares method can be applied.

First of all, note that $B^2=-I$, hence for any matrix-valued function $K(x,t)$ one has
\begin{align}
    BK(x,x) - K(x,x)B &= BK(x,x) + BBK(x,x)B = 2B\mathcal{P}^+[K(x,x)],\label{FirstConditionP}\\
    BK(x,-x) + K(x,-x)B &= BK(x,-x) - BBK(x,-x)B = 2B\mathcal{P}^-[K(x,-x)].\label{SecondConditionP}
\end{align}
Using the definitions \eqref{GWaveMatrices}, \eqref{Df GWaveMatricesUn}, \eqref{Df GWaveMatricesVn} and the parity properties,
one can obtain the following result.
\begin{lemma}\label{lm KN_ValuesOnGsatD} Under the same notation as in \eqref{eq coefficientsmatrix}, one has
\begin{align}
\displaybreak[2]
\mathcal{P}^{+}\left[K_{N}(x,x)\right]&=
\sum_{n=0}^{N}\left(
\mathcal{M}_{n}(x)\frac{C_{n}}{2}+\mathcal {N}_{n}(x)\frac{C_{n}}{2}B\right),\label{FirstConditionPK}\\
\mathcal{P}^{-}\left[K_{N}(x,-x)\right]&=\sum_{n=0}^{N}
\left(\mathcal {M}_{n}(x)\frac{C_{n}}{2}-\mathcal {N}_{n}(x)\frac{C_{n}}{2}B\right),\label{SecondConditionPK}
\end{align}
where the matrix valued functions $\mathcal {N}_{n}$ and $\mathcal {M}_{n}$ are given by
\begin{align}
\mathcal {M}_{n}(x)&=
\begin{bmatrix}
\mathcal U_{2n-1}(x,x)+B\mathcal V_{2n}(x,x) & \mathcal V_{2n-1}(x,x)-B\mathcal U_{2n}(x,x)
\end{bmatrix}\label{Eq halfDifferenceKn},\\
\mathcal {N}_{n}(x)&=
\begin{bmatrix}
-\mathcal V_{2n}(x,x)+B\mathcal U_{2n-1}(x,x) & \mathcal U_{2n}(x,x)+B\mathcal V_{2n-1}(x,x)
\end{bmatrix}.\label{Eq halfSumKn}
\end{align}
Moreover, the following relation holds:
\begin{equation}\label{Eq RelationNnandMn}
B\mathcal{N}_{n}(x)=-\mathcal{M}_{n}(x).
\end{equation}
\end{lemma}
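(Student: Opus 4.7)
My plan is to carry out the calculation directly, relying on three structural observations. First, the explicit formulas \eqref{Df GWaveMatricesUn}--\eqref{Df GWaveMatricesVn} show that $\mathcal U_{2n-1}(x,t)$ and $\mathcal V_{2n-1}(x,t)$ contain only even powers of $t$, while $\mathcal U_{2n}(x,t)$ and $\mathcal V_{2n}(x,t)$ contain only odd powers of $t$. Hence
\[
\mathcal U_{2n-1}(x,-x)=\mathcal U_{2n-1}(x,x),\quad \mathcal V_{2n-1}(x,-x)=\mathcal V_{2n-1}(x,x),
\]
\[
\mathcal U_{2n}(x,-x)=-\mathcal U_{2n}(x,x),\quad \mathcal V_{2n}(x,-x)=-\mathcal V_{2n}(x,x).
\]
Second, since $B^{2}=-I$, the identities \eqref{FirstConditionP}, \eqref{SecondConditionP} mean precisely that $\mathcal P^{\pm}[A]=\tfrac12(A\pm BAB)$; a direct computation then gives the entry-wise formula
\[
\mathcal P^{+}[A]=\tfrac12\begin{pmatrix} A_{11}-A_{22} & A_{12}+A_{21} \\ A_{12}+A_{21} & A_{22}-A_{11}\end{pmatrix},
\]
and similarly for $\mathcal P^{-}$. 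Third, for a column vector $v=(v_1,v_2)^{T}$ one has $Bv=(v_2,-v_1)^{T}$; this is the identity that collapses pairs of scalar components into single components of $\mathcal M_n$ or $\mathcal N_n$.

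I would then expand the first column of the $n$-th summand of $K_N(x,x)$ using \eqref{GWaveMatrices} as
$a_n\mathcal U_{2n-1}(x,x)+b_n\mathcal V_{2n}(x,x)+c_n\mathcal V_{2n-1}(x,x)-d_n\mathcal U_{2n}(x,x)$
and the second column analogously. Applying the entry-wise $\mathcal P^{+}$ formula and collecting terms, the coefficients $a_n,c_n$ multiply $\mathcal U_{2n-1},\mathcal V_{2n-1}$ directly and, through the $BAB$ term, also $B\mathcal V_{2n}$ and $-B\mathcal U_{2n}$; these combine into the columns of $\mathcal M_n$ from \eqref{Eq halfDifferenceKn}, so their total contribution is $\mathcal M_n(x)C_n/2$. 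The coefficients $b_n,d_n$ multiply $\mathcal V_{2n},-\mathcal U_{2n}$ and, through $BAB$, also $B\mathcal U_{2n-1}$ and $B\mathcal V_{2n-1}$; these combine into the columns of $\mathcal N_n$ from \eqref{Eq halfSumKn}, acting on $\bigl(\begin{smallmatrix}-b_n & a_n \\ -d_n & c_n\end{smallmatrix}\bigr)=C_n B$, which gives $\mathcal N_n(x)C_n B/2$. This yields \eqref{FirstConditionPK}.

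For \eqref{SecondConditionPK} I would repeat the computation, but now evaluating at $t=-x$ and applying $\mathcal P^{-}$. By the parity observation, every occurrence of $\mathcal U_{2n},\mathcal V_{2n}$ changes sign; combined with the minus sign in $\mathcal P^{-}[A]=\tfrac12(A-BAB)$, the four sign flips cancel in pairs on the $\mathcal M_n C_n$ summand and reinforce to a single sign flip on the $\mathcal N_n C_n B$ summand, which produces exactly \eqref{SecondConditionPK}. Finally, the relation $B\mathcal N_n=-\mathcal M_n$ is immediate from $B^{2}=-I$: left-multiplying the two columns of \eqref{Eq halfSumKn} by $B$ gives $-\mathcal U_{2n-1}-B\mathcal V_{2n}$ and $-\mathcal V_{2n-1}+B\mathcal U_{2n}$, which is $-\mathcal M_n$ by \eqref{Eq halfDifferenceKn}.

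The only real obstacle is bookkeeping: tracking how each scalar coefficient $a_n,b_n,c_n,d_n$ routes into either $\mathcal M_n C_n$ or $\mathcal N_n C_n B$. The guiding principle---that coefficients paired with the even-in-$t$ objects $\mathcal U_{2n-1},\mathcal V_{2n-1}$ land in the $\mathcal M_n$ summand, while those paired with the odd-in-$t$ objects $\mathcal U_{2n},\mathcal V_{2n}$ land in the $\mathcal N_n$ summand through the twist $C_n\mapsto C_nB$---makes the computation essentially automatic once the parity and projector identities are isolated.
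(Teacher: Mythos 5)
Your proposal is correct and follows essentially the same route as the paper: expand the $n$-th summand of $K_N$ columnwise, apply $\mathcal{P}^{\pm}[A]=\tfrac12(A\pm BAB)$, and use the parity of $\mathcal{U}_k,\mathcal{V}_k$ in $t$ to pass from the $t=x$ case to the $t=-x$ case; the paper merely runs the computation in the opposite direction, expanding $\mathcal{M}_nC_n+\mathcal{N}_nC_nB$ and recognizing each coefficient's contribution as $\mathcal{O}_n^i+B\mathcal{O}_n^iB$, so the content is identical. One bookkeeping misstatement is worth flagging: the total contribution of $a_n,c_n$ is \emph{not} $\mathcal{M}_n C_n/2$, because in the second column of the summand it is $b_n,d_n$ (not $a_n,c_n$) that multiply the even-in-$t$ objects $\mathcal{U}_{2n-1},\mathcal{V}_{2n-1}$; the $a_n,c_n$ terms actually produce the first column of $\mathcal{M}_nC_n/2$ together with the second column of $\mathcal{N}_nC_nB/2$, and symmetrically for $b_n,d_n$. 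Your closing per-column ``guiding principle'' is the correct formulation, and since the two mis-attributed pieces still sum to $\mathcal{M}_nC_n/2+\mathcal{N}_nC_nB/2$, the identities \eqref{FirstConditionPK}--\eqref{SecondConditionPK} and the relation $B\mathcal{N}_n=-\mathcal{M}_n$ come out correctly.
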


\begin{proof}
Equality \eqref{Eq RelationNnandMn} follows directly from \eqref{Eq halfDifferenceKn} and \eqref{Eq halfSumKn} by using $B^2=-I$.

Let us verify \eqref{FirstConditionPK}, the second equality is similar. Note that
\[
C_nB = \begin{pmatrix} a_n & b_n \\ c_n & d_n \end{pmatrix} \begin{pmatrix} 0 & 1 \\ -1 & 0 \end{pmatrix} =
\begin{pmatrix} -b_n & a_n \\ -d_n & c_n \end{pmatrix}.
\]
Thus
\begin{align*}
    \mathcal{M}_n C_n &= \begin{bmatrix}
    a_n(\mathcal{U}_{2n-1}+B\mathcal{V}_{2n})+c_n(\mathcal{V}_{2n-1}-B\mathcal{U}_{2n}) &
    b_n(\mathcal{U}_{2n-1}+B\mathcal{V}_{2n})+d_n(\mathcal{V}_{2n-1}-B\mathcal{U}_{2n})
    \end{bmatrix},\\
    \mathcal{N}_n C_nB &= \begin{bmatrix}
    -b_n(-\mathcal{V}_{2n}+B\mathcal{U}_{2n-1})-d_n(\mathcal{U}_{2n}-B\mathcal{V}_{2n-1}) &
    a_n(-\mathcal{V}_{2n}+B\mathcal{U}_{2n-1})+c_n(\mathcal{U}_{2n}-B\mathcal{V}_{2n-1})
    \end{bmatrix}
\end{align*}
and finally
\begin{align*}
\displaybreak[2]
\mathcal{M}_nC_n + \mathcal{N}_n C_nB &= a_n\begin{bmatrix}
\mathcal{U}_{2n-1}+B\mathcal{V}_{2n} & -\mathcal{V}_{2n}+B\mathcal{U}_{2n-1}
\end{bmatrix} + b_n\begin{bmatrix}
\mathcal{V}_{2n}-B\mathcal{U}_{2n-1} & \mathcal{U}_{2n-1}+B\mathcal{V}_{2n}
\end{bmatrix}\\
&\quad + c_n\begin{bmatrix}
\mathcal{V}_{2n-1}-B\mathcal{U}_{2n} & \mathcal{U}_{2n}-B\mathcal{V}_{2n-1}
\end{bmatrix} +d_n\begin{bmatrix}
-\mathcal{U}_{2n}+B\mathcal{V}_{2n-1} & \mathcal{V}_{2n-1}-B\mathcal{U}_{2n}
\end{bmatrix}.
\end{align*}
Note that
\begin{align*}
a_n&\begin{bmatrix}
\mathcal{U}_{2n-1}+B\mathcal{V}_{2n} & -\mathcal{V}_{2n}+B\mathcal{U}_{2n-1}
\end{bmatrix}\\
& = a_n\begin{bmatrix}
\mathcal{U}_{2n-1} & -\mathcal{V}_{2n}
\end{bmatrix}
+ a_nB\begin{bmatrix}
\mathcal{V}_{2n} & \mathcal{U}_{2n-1}
\end{bmatrix}\\
&= a_n \mathcal{O}_n^1 + a_n B \begin{bmatrix}
 \mathcal{U}_{2n-1} & -\mathcal{V}_{2n}
\end{bmatrix}B
 = a_n \mathcal{O}_n^1 + a_n B \mathcal{O}_n^1 B,
\end{align*}
similarly for the other terms. Hence
\[
\mathcal{M}_nC_n + \mathcal{N}_n C_nB  = a_n \mathcal{O}_n^1 + b_n \mathcal{O}_n^2 +c_n \mathcal{O}_n^3 + d_n \mathcal{O}_n^4
 + B\bigl(a_n \mathcal{O}_n^1 + b_n \mathcal{O}_n^2 + c_n \mathcal{O}_n^3 + d_n \mathcal{O}_n^4\bigr) B,
\]
exactly twice the expression in the left-hand side of \eqref{FirstConditionPK}.
\end{proof}

Expressions \eqref{FirstConditionPK} and \eqref{SecondConditionPK} suggest to consider half-sum and half-difference.

\begin{lemma}\label{Lm halfSumDifferenceKn} The half-sum and half-difference of the Goursat data corresponding to $K_{N}$ have the following form:
\begin{align}
B\mathcal{P}^{+}\left[K_{N}(x,x)\right]+B\mathcal{P}^{-}\left[K_{N}(x,-x)\right]&=\sum_{n=0}^{N}\mathcal {N}_{n}(x)C_n,\label{Eq4}
\\
B\mathcal{P}^{+}\left[K_{N}(x,x)\right]-B\mathcal{P}^{-}\left[K_{N}(x,-x)\right]&=\sum_{n=0}^{N}B\mathcal {N}_{n}(x)C_nB,\label{Eq5}
\end{align}
\end{lemma}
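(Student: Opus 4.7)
The plan is to derive both equalities directly from Lemma \ref{lm KN_ValuesOnGsatD}, which already gives closed-form expressions for $\mathcal{P}^+[K_N(x,x)]$ and $\mathcal{P}^-[K_N(x,-x)]$ as sums involving $\mathcal{M}_n$, $\mathcal{N}_n$, $C_n$ and $B$. No new identity from Section \ref{Sect4} will be needed; the only tools are the formulas \eqref{FirstConditionPK}, \eqref{SecondConditionPK}, and the relation \eqref{Eq RelationNnandMn}, together with $B^2=-I$.

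First I would add \eqref{FirstConditionPK} and \eqref{SecondConditionPK}. The terms involving $\mathcal{N}_n C_n B/2$ cancel, leaving
\[
\mathcal{P}^{+}[K_{N}(x,x)] + \mathcal{P}^{-}[K_{N}(x,-x)] = \sum_{n=0}^{N} \mathcal{M}_n(x) C_n.
\]
Multiplying on the left by $B$ and applying \eqref{Eq RelationNnandMn} in the form $B\mathcal{M}_n = -B^2\mathcal{N}_n = \mathcal{N}_n$ (using $B^2=-I$) gives \eqref{Eq4}. Subtracting \eqref{SecondConditionPK} from \eqref{FirstConditionPK} is equally immediate: the terms $\mathcal{M}_n C_n/2$ cancel and one is left with $\sum \mathcal{N}_n(x) C_n B$. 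Multiplying on the left by $B$ produces $\sum B\mathcal{N}_n(x) C_n B$, which is precisely \eqref{Eq5}.

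There is essentially no obstacle; the computation is bookkeeping once one observes the symmetric role of $\mathcal{M}_n$ and $\mathcal{N}_n$ afforded by \eqref{Eq RelationNnandMn}. The only care required is to keep the matrix $B$ on the correct side (left multiplication versus the trailing $B$ coming from the $C_n B$ term), since the underlying algebra is noncommutative.
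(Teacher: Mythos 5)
Your proof is correct and follows exactly the route the paper intends: Lemma \ref{Lm halfSumDifferenceKn} is a direct bookkeeping consequence of \eqref{FirstConditionPK}, \eqref{SecondConditionPK} and the relation $B\mathcal{N}_{n}=-\mathcal{M}_{n}$ together with $B^{2}=-I$, which is precisely the computation you carry out. Nothing is missing.
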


The principal significance of this lemma is that the two conditions \eqref{Eq4} and \eqref{Eq5} are equivalent. Indeed, using \eqref{FirstConditionP} and \eqref{SecondConditionP} one easily verifies that
\[
B\left[B\mathcal{P}^{+}\left[K_{N}(x,x)\right]+B\mathcal{P}^{-}\left[K_{N}(x,-x)\right]\right]B = B\mathcal{P}^{+}\left[K_{N}(x,x)\right]-B\mathcal{P}^{-}\left[K_{N}(x,-x)\right],
\]
showing that the condition \eqref{Eq5} is nothing more than \eqref{Eq4} multiplied by the matrix $B$ from both sides. As we show below, this fact allows one to obtain the coefficients $\left\{a_{n}, b_{n}, c_{n}, d_{n} \right\}_{n=0}^{N}$ from one condition only.

As was mentioned at the beginning of this section, arbitrary close uniform approximation of the Goursat conditions \eqref{Eq GsatDataX+}, \eqref{Eq GsatDataX-} by a linear combination of the form \eqref{Eq ApproxIntKernelbyK_n} is always possible. Hence it follows from \eqref{Eq4} that an arbitrary close approximation of $-Q(x)/2$ by an expression of the form $\sum_{n=0}^{N}\mathcal {N}_{n}(x)C_n$ is possible. So we may formulate the following result.

\begin{lemma}\label{Lm ApproxQ} Suppose that  $\left\{a_{n}, b_n, c_{n}, d_n\right\}_{n=0}^{N}$ are complex numbers such that
\begin{equation}\label{eq UniformProblem}
\left|
-\frac{1}{2}Q(x)
-\sum_{n=0}^{N}\mathcal {N}_{n}(x)\begin{pmatrix}a_{n} & b_n\\c_{n} & d_n
\end{pmatrix}
\right|
<\varepsilon
\end{equation}
and let $K_N$ be defined by \eqref{Eq ApproxIntKernelbyK_n}.
Then
\begin{equation*}
\left|
 -Q(x)-\bigl(BK_N(x,x) - K_N(x,x)B\bigr)
\right|
<2\varepsilon
\qquad\text{and}\qquad
\left|
BK_N(x,-x) + K_N(x,-x)B
\right|
<2\varepsilon.
\end{equation*}
\end{lemma}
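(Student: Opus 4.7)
The plan is to use the half-sum identity from Lemma \ref{Lm halfSumDifferenceKn}, which rewrites the quantity appearing in the hypothesis \eqref{eq UniformProblem} as
\[
\sum_{n=0}^{N}\mathcal{N}_{n}(x)C_{n}=B\mathcal{P}^{+}\bigl[K_{N}(x,x)\bigr]+B\mathcal{P}^{-}\bigl[K_{N}(x,-x)\bigr].
\]
The idea is then to \emph{decouple} the two Goursat-type expressions by exploiting that these two summands sit in complementary subspaces. First I would verify by a direct computation using $B^{2}=-I$ that $B\mathcal{P}^{+}[K_{N}(x,x)]\in\mathcal{H}^{-}$ (anti-commutes with $B$), while $B\mathcal{P}^{-}[K_{N}(x,-x)]\in\mathcal{H}^{+}$ (commutes with $B$). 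Since the potential satisfies $Q(x)\in\mathcal{H}^{-}$, the two components of the approximation bound extracted by the projectors $\mathcal{P}^{+}$ and $\mathcal{P}^{-}$ are precisely the two Goursat conditions we need.

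Next I would apply the projector $\mathcal{P}^{+}$ to both sides of \eqref{eq UniformProblem}. To pass the projector through the norm one needs the elementary fact $\|\mathcal{P}^{\pm}\|_{\mathrm{op}}\le 1$ with respect to the Frobenius norm; this follows from the unitarity of $B$ (namely $B^{*}B=-B^{2}=I$), which gives $|BAB|=|A|$, combined with the triangle inequality applied to $\mathcal{P}^{\pm}[A]=(A\pm BAB)/2$. Since $\mathcal{P}^{+}$ fixes $-Q(x)/2\in\mathcal{H}^{-}$ and annihilates the $\mathcal{H}^{+}$-summand, this yields
\[
\left|-\tfrac{1}{2}Q(x)-B\mathcal{P}^{+}\bigl[K_{N}(x,x)\bigr]\right|<\varepsilon.
\]
An analogous application of $\mathcal{P}^{-}$ kills the $-Q(x)/2$ contribution (because $-Q/2\in\mathcal{H}^{-}=\ker\mathcal{P}^{-}$) and isolates the second summand, producing $|B\mathcal{P}^{-}[K_{N}(x,-x)]|<\varepsilon$.

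Finally, multiplying both estimates by $2$ and invoking the identities \eqref{FirstConditionP} and \eqref{SecondConditionP}, namely $2B\mathcal{P}^{+}[K_{N}(x,x)]=BK_{N}(x,x)-K_{N}(x,x)B$ and $2B\mathcal{P}^{-}[K_{N}(x,-x)]=BK_{N}(x,-x)+K_{N}(x,-x)B$, delivers exactly the two claimed inequalities. There is no real obstacle in this argument — the lemma is essentially an algebraic unwinding of the parity decomposition, with the only substantive verification being the operator-norm bound on $\mathcal{P}^{\pm}$, which is precisely the reason the authors fixed the Frobenius norm at the outset of the paper.
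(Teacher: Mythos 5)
Your proof is correct and is essentially the paper's own argument in different clothing: the paper multiplies \eqref{eq UniformProblem} by $B$ on both sides (using $BQB=Q$ and $|BAB|=|A|$) and then takes the half-sum and half-difference of the two resulting inequalities, which is exactly your application of $\mathcal{P}^{\pm}=\tfrac12(I\pm B(\cdot)B)$ together with the bound $|\mathcal{P}^{\pm}[A]|\le|A|$. The only cosmetic difference is that you make the contractivity of the projectors explicit where the paper invokes the triangle inequality directly.
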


\begin{proof} Taking into account that $BQ(x)B=Q(x)$ we obtain from \eqref{eq UniformProblem} that
\begin{equation}\label{eq UniformProblem2}
\left|
\frac{1}{2}Q(x)
-\sum_{n=0}^{N}B\mathcal {N}_{n}(x)\begin{pmatrix}a_{n} & b_n\\c_{n} & d_n
\end{pmatrix}B
\right|
<\varepsilon.
\end{equation}
Using \eqref{Eq4} and \eqref{Eq5} we obtain, taking half-sum and half-difference of the expressions in \eqref{eq UniformProblem} and \eqref{eq UniformProblem2}, that
\begin{gather*}
\left|Q(x) - 2B\mathcal{P}^+ [K_N(x,x)]\right|  < 2\varepsilon,\\
\left|B\mathcal{P}^{-}\left[K_{N}(x,-x)\right]\right|  < 2\varepsilon,
\end{gather*}
exactly the expressions from the statement of Lemma, c.f., \eqref{FirstConditionP}, \eqref{SecondConditionP}.
\end{proof}

\begin{theorem}\label{Th AATOForDiracUniform}
Let $Q\in C([0,b],\mathcal{M}_2)$. Suppose that  $\left\{a_{n}, b_n, c_{n}, d_n\right\}_{n=0}^{N}$ be complex numbers such that for every $x\in \left[0,b \right],$
\begin{equation*}
\left|
-\frac{1}{2}Q(x)
-\sum_{n=0}^{N}\mathcal {N}_{n}(x)\begin{pmatrix}a_{n} & b_n\\c_{n} & d_n
\end{pmatrix}
\right|
<\varepsilon
\end{equation*}
Then the integral kernel $K$ is approximated by the linear combination \eqref{Eq ApproxIntKernelbyK_n}
in such a way that the following inequality holds
\begin{equation}\label{Eq EstimateIntegralKernerUniform}
\sup_{(x,t)\in\Omega^+}\left|K(x,t)-K_{N}(x,t)\right|<C_{Q}\varepsilon,
\end{equation}
here the constant $C_{Q}$ depends on the potential $Q$, but does not depend on $N$.
\end{theorem}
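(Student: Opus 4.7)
The plan is to combine Lemma~\ref{Lm ApproxQ} with the well-posedness of the Goursat problem for \eqref{Eq IntKernel} proved in Appendix~\ref{Append1}. The hypothesis on the coefficients $\{a_n,b_n,c_n,d_n\}_{n=0}^{N}$ together with Lemma~\ref{Lm ApproxQ} yield, uniformly in $x\in[0,b]$, the two residual estimates
\[
\bigl|-Q(x) - \bigl(BK_N(x,x)-K_N(x,x)B\bigr)\bigr| < 2\varepsilon
\quad\text{and}\quad
\bigl|BK_N(x,-x)+K_N(x,-x)B\bigr| < 2\varepsilon,
\]
so $K_N$ satisfies both Goursat conditions \eqref{Eq GsatDataX+}, \eqref{Eq GsatDataX-} up to a uniform error of order $\varepsilon$.

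The key observation is that $K_N$ itself solves the PDE \eqref{Eq IntKernel} exactly. Indeed, as pointed out in the paragraph preceding Theorem~\ref{Th CompletenessForIntKernelEq}, every generalized wave matrix $\mathcal{O}_n^j$ satisfies \eqref{Eq IntKernel} by virtue of the transmutation property (each $\mathcal{O}_n^j$ is $T$ applied to a wave matrix $P_n^j$ solving the free equation \eqref{Eq IntKernel0}), and $K_N$ is a finite linear combination of such functions. Hence setting $W := K-K_N$, one has
\[
B\, W_x(x,t) + W_t(x,t)\, B = -Q(x)\,W(x,t)\qquad\text{in }\Omega^+,
\]
while the Goursat data of $W$ on the characteristics $t=\pm x$ coincide with the negatives of the two residuals above, and are therefore uniformly bounded by $2\varepsilon$.

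The conclusion then reduces to a linear stability estimate for the Goursat problem of \eqref{Eq IntKernel}: one needs a constant $C$, depending only on $b$ and on the $L_1$-norm of $Q$ (but not on $N$), such that any continuous solution $W$ of the homogeneous-with-potential PDE on $\Omega^+$ satisfies
\[
\sup_{(x,t)\in\Omega^+}|W(x,t)| \le C\,\max\Bigl(\sup_{x\in[0,b]}|BW(x,x)-W(x,x)B|,\ \sup_{x\in[0,b]}|BW(x,-x)+W(x,-x)B|\Bigr).
\]
Applying such a bound to $W = K-K_N$ gives $|W(x,t)|\le 2C\,\varepsilon$ on $\Omega^+$, which is exactly \eqref{Eq EstimateIntegralKernerUniform} with $C_Q := 2C$.

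The only non-trivial step is extracting this stability estimate from Appendix~\ref{Append1}, where the Goursat problem is rewritten as an equivalent Volterra-type integral equation and solved by Picard iteration. The norm bound on the resulting Neumann series (essentially Gronwall applied along characteristics) gives precisely the required continuous dependence of the solution on the Goursat data, with a constant controlled by $b$ and $\int_0^b|Q(\tau)|\,d\tau$. This is the main obstacle: one has to re-read the proof of well-posedness as exhibiting a bounded linear map from the two-sided Goursat data (in the uniform norm) into $C(\Omega^+,\mathcal{M}_2)$. Once this is done, the proof of the theorem is the three-line argument above.
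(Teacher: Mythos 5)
Your proposal is correct and follows essentially the same route as the paper: the paper's proof is exactly ``apply Lemma~\ref{Lm ApproxQ} to bound the Goursat residuals of $K_N$ by $2\varepsilon$, then invoke the well-posedness of the Goursat problem from Appendix~\ref{Append1}.'' The uniform stability estimate you flag as the ``only non-trivial step'' is already supplied there by Theorem~\ref{Th ExisUniqH}, whose continuous-data case gives $|H(\xi,\eta)|\le \tfrac12(\|E_1\|_{C[0,b]}+\|E_2\|_{C[0,b]})\exp(b\|Q\|_{C[0,b]})$, yielding $C_Q=2\exp(b\|Q\|_{C[0,b]})$.
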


\begin{proof}
The proof immediately follows from Lemma \ref{Lm ApproxQ} and well-posedness of the Goursat problem \eqref{Eq IntKernel}--\eqref{Eq GsatDataX-}, see Appendix \ref{Append1}.
\end{proof}

\section{Least squares approximation and convergence rate estimate}\label{Sect6}

Despite Lemma \ref{Lm ApproxQ} shows how to find the coefficients of the approximation $K_N$, there exists a disadvantage from the practical point of view since minimizing \eqref{eq UniformProblem} in a uniform norm is not a simple task, even more in the context of matrix-valued functions, see \cite{DevoreLorentz}. While the least squares method can be used to minimize \eqref{eq UniformProblem}, it does not need to produce nice uniform approximation (Gibbs phenomenon can occur, for example).
In this section we present error estimates for the approximate integral kernel obtained using the least squares minimization. Additionally, we prove decay rate estimates depending on the smoothness of the potential matrix $Q$.

Consider the Goursat problem
\begin{align}
BK_{x}(x,t)+K_{t}(x,t)B &=-Q(x)K(x,t),\label{Eq DifferenceOfTwoSolts}\\
BK(x,x)-K(x,x)B &=E_{1}(x),\label{Eq GsatDataforDifference+}\\
BK(x,-x)+K(x,-x)B&=E_{2}(x),\label{Eq GsatDataforDifference-}
\end{align}
where $E_{1}$ and $E_{2}$ satisfy the compatibility conditions $E_{1}\in\mathcal{H}^{-}$ and $E_{2}\in\mathcal{H}^{+}$. Here $\mathcal{H}=L_2((0,b), \mathcal{M}_2)$ and the subspaces $\mathcal{H}^{\pm}$ are defined as in Section \ref{Sect4}.

Any difference $\left[K-K_{N}\right](x,t)$ satisfies a problem having the form \eqref{Eq DifferenceOfTwoSolts}--\eqref{Eq GsatDataforDifference-}. The least squares method minimizes the $L_2((0,b), \mathcal{M}_2)$ norm of the difference in \eqref{eq UniformProblem}, and by Lemma \ref{Lm ApproxQ}, $L_2((0,b), \mathcal{M}_2)$ norms of the functions $E_{1,2}$ are bounded by twice the norm of \eqref{eq UniformProblem}. Naturally we are interested in estimating the solution of the Goursat promlem \eqref{Eq DifferenceOfTwoSolts}--\eqref{Eq GsatDataforDifference-} in terms of the norms of the functions $E_{1,2}$. The following proposition holds.

\begin{proposition}\label{Pp EstimateInL2x-Norm}
Let $E_{1}$, $E_{2}$, $Q\in L_{2}\bigl((0,b),\mathcal{M}_2\bigr)$. Let $E_{1}$ and $E_{2}$ satisfy the compatibility conditions $E_{1}\in\mathcal{H}^{-}$ and $E_{2}\in\mathcal{H}^{+}$. Then the Goursat problem \eqref{Eq DifferenceOfTwoSolts}--\eqref{Eq GsatDataforDifference-} possesses a solution $K$ such that for all $x\in \left[0, b\right]$ the following estimate holds
\begin{equation}\label{EstimateInL2}
\int_{-x}^{x}\left|K(x,t)\right|^{2}\,dt \le \frac 12\left(\left\|E_{1}\right\|_{L_2(0,b)}^{2}+\left\|E_{2}\right\|_{L_2(0,b)}^{2}\right) \left(1+\sqrt b \|Q\|_{L_2(0,b)} \cdot e^{\sqrt b\|Q\|_{L_2(0,b)}}\right).
\end{equation}
\end{proposition}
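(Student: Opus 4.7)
The plan is to reduce the Goursat problem to a coupled Volterra integral system in characteristic coordinates using the projectors \(\mathcal{P}^{\pm}\) from Section~\ref{Sect4}, and then to extract the \(L^{2}\) bound by a Gronwall-type argument along the lines \(\xi+\eta=x\). The existence of the solution \(K\) should come out of convergence of the same Picard iteration, so I plan to treat well-posedness and the quantitative estimate in a single stroke.

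The first step is to decompose \(K=K_{c}+K_{a}\) with \(K_{c}:=\mathcal{P}^{-}[K]\in\mathcal{H}^{+}\) and \(K_{a}:=\mathcal{P}^{+}[K]\in\mathcal{H}^{-}\). A short computation shows \(\mathcal{P}^{\pm}\) are self-adjoint with respect to the Frobenius inner product, so \(\mathcal{H}^{+}\perp\mathcal{H}^{-}\) and \(|K|^{2}=|K_{c}|^{2}+|K_{a}|^{2}\) pointwise. Applying \(\mathcal{P}^{\pm}\) to \eqref{Eq DifferenceOfTwoSolts} and using \(BQB=-Q\) (from \(Q\in\mathcal{H}^{-}\)), \(BK_{c}=K_{c}B\), \(BK_{a}=-K_{a}B\), and \(B^{-1}=-B\), the matrix hyperbolic equation decouples in characteristic coordinates \(\xi=(x+t)/2\), \(\eta=(x-t)/2\) into the first-order system
\[
\partial_{\xi}K_{c}=BQ(\xi+\eta)K_{a},\qquad \partial_{\eta}K_{a}=BQ(\xi+\eta)K_{c}.
\]
Combining \eqref{FirstConditionP}--\eqref{SecondConditionP} with \eqref{Eq GsatDataforDifference+}--\eqref{Eq GsatDataforDifference-} (the compatibility assumptions \(E_{1}\in\mathcal{H}^{-}\), \(E_{2}\in\mathcal{H}^{+}\) being essential here), the Goursat conditions translate to the characteristic initial data \(K_{a}(\xi,0)=-\tfrac{1}{2}BE_{1}(\xi)\) and \(K_{c}(0,\eta)=-\tfrac{1}{2}BE_{2}(\eta)\). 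Integrating along characteristics produces the coupled Volterra integral system
\[
K_{c}(\xi,\eta)=-\tfrac{1}{2}BE_{2}(\eta)+\int_{0}^{\xi}BQ(s+\eta)K_{a}(s,\eta)\,ds,\quad K_{a}(\xi,\eta)=-\tfrac{1}{2}BE_{1}(\xi)+\int_{0}^{\eta}BQ(\xi+s)K_{c}(\xi,s)\,ds,
\]
whose Picard iteration will converge in the triangle \(\{\xi,\eta\ge 0,\ \xi+\eta\le b\}\) and deliver \(K=K_{c}+K_{a}\).

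To extract the \(L^{2}\) bound, I introduce \(\Phi(x):=\int_{0}^{x}|K_{c}(\xi,x-\xi)|^{2}\,d\xi\) and \(\Psi(x):=\int_{0}^{x}|K_{a}(\xi,x-\xi)|^{2}\,d\xi\); the Jacobian \(dt=2\,d\xi\) along \(\xi+\eta=x\) gives \(\int_{-x}^{x}|K(x,t)|^{2}\,dt=2(\Phi(x)+\Psi(x))\). Squaring the Volterra representations pointwise, using \(|BY|=|Y|\) (unitary invariance of the Frobenius norm), applying Cauchy--Schwarz to the integral term, integrating in \(\xi\) over \([0,x]\), and using Fubini to recognise \(\int_{0}^{x}\int_{0}^{\xi}|K_{a}(s,x-\xi)|^{2}\,ds\,d\xi=\int_{0}^{x}\Psi(y)\,dy\) (analogously for \(\Phi\)), I arrive at a coupled Gronwall-type inequality for \(F(x):=\Phi(x)+\Psi(x)\) of the form \(F(x)\le\tfrac{1}{2}(\|E_{1}\|_{L^{2}}^{2}+\|E_{2}\|_{L^{2}}^{2})+c\int_{0}^{x}F(y)\,dy\), from which the stated bound on \(2F(x)\) follows by scalar Gronwall.

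The main obstacle is matching the precise constant \(1+\sqrt{b}\,\|Q\|_{L^{2}}\,e^{\sqrt{b}\,\|Q\|_{L^{2}}}\) rather than the cruder \(e^{2\|Q\|_{L^{2}}^{2}b}\) that a direct Gronwall produces. To recover it I would split Cauchy--Schwarz more carefully so that one factor becomes \(\|Q\|_{L^{1}(0,b)}\le\sqrt{b}\,\|Q\|_{L^{2}(0,b)}\) instead of \(\|Q\|_{L^{2}}^{2}\), and treat the zeroth-order Picard contribution — responsible for the additive \(1\) — separately from the Neumann-series correction, which is what produces the \(\sqrt{b}\,\|Q\|\,e^{\sqrt{b}\,\|Q\|}\) piece. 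The rest is bookkeeping: propagating the inequalities cleanly through the Gronwall step and observing that the same estimates justify convergence of the Picard iteration used to construct \(K\) in the first place.
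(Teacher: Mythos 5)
Your reduction is essentially the paper's (Appendix~\ref{Append1}): the same characteristic change of variables, the same use of the projectors $\mathcal{P}^{\pm}$ to split the equation and the Goursat data, and the same Volterra system --- the paper writes it as the single integral equation \eqref{Eq IntegralEquationH} for $H=\mathcal{P}^{+}[H]+\mathcal{P}^{-}[H]$ and establishes existence by exactly the Picard iteration you describe. The pointwise orthogonality $|K|^{2}=|K_{c}|^{2}+|K_{a}|^{2}$ and the identity $\int_{-x}^{x}|K(x,t)|^{2}\,dt=2(\Phi(x)+\Psi(x))$ are correct and in fact slightly cleaner than what the paper does (it simply applies the triangle inequality in $L_{2}(-x,x)$ to the series of iterates). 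One small slip: for $Q\in\mathcal{H}^{-}$ one has $BQB=+Q$, not $-Q$, since $B^{2}=-I$; this does not affect the decoupled system you write down, which is correct.

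The genuine gap is the final quantitative step. A Gronwall inequality $F(x)\le F_{0}+c\int_{0}^{x}F$, with $F=\Phi+\Psi$ and $c$ coming from Cauchy--Schwarz applied to $\int_{0}^{\xi}Q(s+\eta)K_{a}(s,\eta)\,ds$, necessarily carries $c\sim\|Q\|_{L_{2}}^{2}$; your proposed refinement yields instead $F(x)\le F_{0}+\|Q\|_{L_{1}}\int_{0}^{x}|Q(y)|F(y)\,dy$, whose Gronwall factor is again $e^{\|Q\|_{L_{1}}^{2}}$. Either way the exponent is quadratic in $\|Q\|$, and peeling off the zeroth Picard term does not change this: the obstruction lives in the entire tail of the Neumann series, not in its first term. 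The constant $1+\sqrt{b}\,\|Q\|_{L_{2}}e^{\sqrt{b}\|Q\|_{L_{2}}}$, linear in $\|Q\|$ inside the exponential, is out of reach of any single-pass Gronwall on $\|K\|^{2}$. The paper instead proves a sharp bound on each individual iterate,
\[
|H_{n}(\xi,\eta)|\le \frac{C_{E}\,(\xi+\eta)^{\frac{n-1}{2}}\sigma^{\frac{n}{2}}(\xi+\eta)}{2\sqrt{(n-1)!\,n!}},\qquad \sigma(s)=\int_{0}^{s}|Q|^{2},
\]
obtained by alternating Cauchy--Schwarz with the exact integrations $\int_{0}^{s}u^{n-1}\,du=s^{n}/n$ and $\int_{0}^{s}|Q|^{2}\sigma^{n}=\sigma^{n+1}/(n+1)$; summing the $L_{2}(-x,x)$ norms of the iterates gives $\sum_{n\ge1}(x\sigma(x))^{n/2}/(n-1)!=\sqrt{x\sigma(x)}\,e^{\sqrt{x\sigma(x)}}$ and hence exactly the stated constant. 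If you replace your Gronwall step by this term-by-term estimate --- which your Picard setup already supports --- the argument closes; as written, it proves well-posedness and \emph{an} $L_{2}$ bound, but not inequality \eqref{EstimateInL2}.
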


\begin{proof}For the proof of this proposition we refer the reader to Appendix \ref{Append1}.\end{proof}

Now, suppose that potential matrix $Q$ is a smooth function. Let $Q\in C^r\bigl([0,b],\mathcal{M}_2\bigr)$ for some $r\in \mathbb{N}_0 = \mathbb{N}\cup\{0\}$.
Then the statement of Theorem \ref{Th CompletenessForIntKernelEq} can be made more precise.

\begin{proposition}\label{Prop completeness smooth Q}
Let $Q\in C^r\bigl([0,b],\mathcal{M}_2\bigr)$ for some $r\in \mathbb{N}_0$ and $K$ be the integral kernel of transmutation operator. Then there exists a constant $C>0$ such that for every $N>r$ there exist coefficients $\{a_n, b_n, c_n,d_n\}_{n=0}^N$ such that
\begin{equation}\label{Completeness of O sm Q}
\sup_{(x,t)\in\Omega^+} \left|K(x,t) - \sum_{n=0}^{N}
\Bigl(
a_{n}\mathcal O_{n}^{1}(x,t) +b_{n}\mathcal O_{n}^{2}(x,t)+c_{n}\mathcal O_{n}^{3}(x,t)+d_{n}\mathcal O_{n}^{4}(x,t)
\Bigr)\right|<\frac{C}{N^r}.
\end{equation}
\end{proposition}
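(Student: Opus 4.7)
The plan is to follow the strategy used in the proof of Theorem~\ref{Th CompletenessForIntKernelEq}, but to replace the qualitative Weierstrass approximation step by a quantitative Jackson-type estimate. The first step will be to invoke the regularity result for the integral kernel (proved in the Appendix), which guarantees that if $Q\in C^r([0,b],\mathcal{M}_2)$ then the trace $t\mapsto K(b,t)$ belongs to $C^r([-b,b],\mathcal{M}_2)$, with its $C^r$-norm controlled by $b$ and $\|Q\|_{C^r}$. Applying Jackson's theorem entry by entry, for every integer $N>r$ one obtains a matrix polynomial
\begin{equation*}
P_N(t)=\sum_{n=0}^{N} M_n t^n,\qquad M_n\in\mathcal{M}_2,
\end{equation*}
such that $\sup_{t\in[-b,b]}\left|K(b,t)-P_N(t)\right|\le C_1/N^r$, where the constant $C_1$ depends on $b$, $r$ and $\|Q\|_{C^r}$ but not on $N$.

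Next, I would transport this approximation from $x=b$ into the interior of $\Omega^+$ by means of the well-posedness of the Cauchy problem \eqref{Pb CauchyPblemEqForK}. Let $K_2$ denote the unique solution of \eqref{Pb CauchyPblemEqForK} with boundary data $F=P_N$. The difference $K-K_2$ then solves the same Cauchy problem with $F(t)=K(b,t)-P_N(t)$, and the a~priori bound \eqref{EstimateCauchyProblem} immediately yields
\begin{equation*}
\sup_{(x,t)\in\Omega^+}\left|K(x,t)-K_2(x,t)\right|\le \frac{2C_1\,e^{b\int_0^b|Q(\tau)|\,d\tau}}{N^r}.
\end{equation*}

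The final step is to realize $K_2$ as a linear combination of generalized wave polynomials, which is already done in the concluding part of the proof of Theorem~\ref{Th CompletenessForIntKernelEq}: the equation
\begin{equation*}
P_N(t)=\sum_{n=0}^{N}\bigl(a_n\mathcal O_n^1(b,t)+b_n\mathcal O_n^2(b,t)+c_n\mathcal O_n^3(b,t)+d_n\mathcal O_n^4(b,t)\bigr)
\end{equation*}
is solved uniquely by downward induction on the degree, using the invertibility of the matrix $\bigl[\Phi_0(b),\Psi_0(b)\bigr]$ at each step. The resulting linear combination is a solution of \eqref{Eq IntKernel} that matches $K_2$ at $x=b$, so by uniqueness it coincides with $K_2$ throughout $\Omega^+$. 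Setting $C=2C_1\,e^{b\int_0^b|Q(\tau)|\,d\tau}$ delivers the desired inequality \eqref{Completeness of O sm Q}.

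The main obstacle in this program is the first step, namely, deducing the $C^r$ regularity of the trace $K(b,\cdot)$ from the $C^r$ regularity of $Q$. This amounts to differentiating the integral equation equivalent to the Goursat problem \eqref{Eq IntKernel}--\eqref{Eq GsatDataX-} up to $r$ times and verifying that the resulting equation is still uniquely solvable with a bound on the right-hand side controlled by $\|Q\|_{C^r}$. I would isolate this technicality as a separate lemma in the Appendix; the remainder of the argument is then a routine combination of Jackson's theorem and the Cauchy-problem stability estimate.
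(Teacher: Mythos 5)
Your proposal is correct and follows essentially the same route as the paper: $C^r$ regularity of $K$ (hence of the trace $K(b,\cdot)$) established in Appendix \ref{Append2}, a Jackson-type estimate from \cite[Chapter 7, Theorem 6.2]{DevoreLorentz} for the polynomial approximation at $x=b$, and then transport into $\Omega^+$ via the Cauchy problem \eqref{Pb CauchyPblemEqForK} and its stability bound \eqref{EstimateCauchyProblem}, finishing exactly as in the proof of Theorem \ref{Th CompletenessForIntKernelEq}. The technicality you flag at the end is precisely what the paper isolates as Proposition \ref{Prop smoothness K} in the Appendix.
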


\begin{proof}
Under the condition $Q\in C^r\bigl([0,b],\mathcal{M}_2\bigr)$ the integral kernel $K\in C^r\bigl(\Omega^+, \mathcal{M}_2\bigr)$, see Appendix \ref{Append2}. Hence $K(b,\cdot)\in C^r\bigl([-b,b],\mathcal{M}_2\bigr)$. Consider the polynomial-matrix $P_N$ of best uniform approximation of the function $K(b,\cdot)$. As it follows from \cite[Chapter 7, Theorem 6.2]{DevoreLorentz},
\[
\sup_{t\in[-b,b]}\left|K(b,t) - P_N(t)\right| <\frac{C}{N^r}
\]
for every $N>r$, where the constant $C$ does not depend on $N$. Now the proof can be finished identically to the proof of Theorem \ref{Th CompletenessForIntKernelEq}.
\end{proof}

\begin{remark}
Actually, the right-hand side in \eqref{Completeness of O sm Q} can be changed to $o\left(\frac 1{N^r}\right)$, $N\to\infty$.
\end{remark}

\begin{corollary}\label{Cor K SmoothQ}
Let $Q\in C^r\bigl([0,b],\mathcal{M}_2\bigr)$ for some $r\in \mathbb{N}_0$ and $K$ be the integral kernel of the transmutation operator. Then for every $N>r$  there exists an approximate kernel $K_N$ of the form \eqref{Eq ApproxIntKernelbyK_n} such that
\begin{gather}
    \sup_{(x,t)\in Q^+}\bigl|K(x,t) - K_N(x,t)\bigr| \le \frac{C}{N^r}, \label{Estimate K Uniform}\\
    \sup_{x\in [0,b]} \bigl| -Q - (BK_N(x,x) - K_N(x,x)B)\bigr| < \frac{2C}{N^r}, \label{Estimate GC1}\\
    \sup_{x\in [0,b]} \bigl| BK_N(x,-x) + K_N(x,-x)B)\bigr| < \frac{2C}{N^r}, \label{Estimate GC2}
\end{gather}
where the constant $C$ does not depend on $N$.
\end{corollary}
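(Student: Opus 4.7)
The plan is to deduce the corollary directly from Proposition \ref{Prop completeness smooth Q}, with nothing more than a restriction-to-characteristics argument. First I would apply Proposition \ref{Prop completeness smooth Q} to the given $Q\in C^r([0,b],\mathcal{M}_2)$ and to each $N>r$, obtaining coefficients $\{a_n,b_n,c_n,d_n\}_{n=0}^{N}$ and the associated kernel $K_N$ of the form \eqref{Eq ApproxIntKernelbyK_n} satisfying
\[
\sup_{(x,t)\in\Omega^+}|K(x,t)-K_N(x,t)|\le\frac{C}{N^r},
\]
which is exactly \eqref{Estimate K Uniform}. So the first conclusion is free.

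For the two Goursat estimates, I would use that the exact kernel $K$ fulfills $BK(x,x)-K(x,x)B=-Q(x)$ and $BK(x,-x)+K(x,-x)B=0$ by \eqref{Eq GsatDataX+}, \eqref{Eq GsatDataX-}. Subtracting, one gets the algebraic identities
\[
-Q(x)-\bigl(BK_N(x,x)-K_N(x,x)B\bigr)=B(K-K_N)(x,x)-(K-K_N)(x,x)B,
\]
\[
BK_N(x,-x)+K_N(x,-x)B=-\bigl(B(K-K_N)(x,-x)+(K-K_N)(x,-x)B\bigr).
\]
Restricting \eqref{Estimate K Uniform} to the characteristic curves $t=\pm x\subset\Omega^+$ and applying the triangle inequality together with submultiplicativity of the Frobenius norm (with $|B|=\sqrt{2}$), each right-hand side is bounded by $2\sqrt{2}\,\sup_{\Omega^+}|K-K_N|\le 2\sqrt{2}\,C/N^r$. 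Absorbing the harmless $\sqrt{2}$ into the constant $C$ yields \eqref{Estimate GC1} and \eqref{Estimate GC2} as stated.

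There is essentially no obstacle in this argument; the genuine content has already been placed in Proposition \ref{Prop completeness smooth Q}, which itself combined two nontrivial ingredients: the inheritance of $C^r$-smoothness by the integral kernel from the potential (Appendix \ref{Append2}), and the Jackson-type polynomial approximation rate for the trace $K(b,\cdot)\in C^r([-b,b],\mathcal{M}_2)$ via \cite[Chapter 7, Theorem 6.2]{DevoreLorentz}. The present corollary merely repackages that proposition by exploiting that the same $K_N$ must inherit good boundary behaviour on the characteristics, since $K$ itself does so exactly.
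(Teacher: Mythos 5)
Your proof is correct and follows essentially the same route as the paper: \eqref{Estimate K Uniform} is read off from Proposition \ref{Prop completeness smooth Q}, and the two Goursat estimates follow by inserting the exact Goursat conditions for $K$ and applying the triangle inequality on the characteristics. The only cosmetic difference is that the paper uses the fact that $B$ is orthogonal, so left or right multiplication by $B$ preserves the Frobenius norm exactly (giving the factor $2$ rather than your $2\sqrt{2}$), whereas you use submultiplicativity and absorb the extra $\sqrt{2}$ into the constant, which is harmless.
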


\begin{proof}
Inequality \eqref{Estimate K Uniform} immediately follows from Proposition \ref{Prop completeness smooth Q}. To obtain \eqref{Estimate GC1}, we apply the triangle inequality
\[
\begin{split}
\bigl|-Q - (BK_N - K_NB)\bigr| &= \bigl|-Q - (BK - KB) - (B(K_N - K) - (K_N-K)B)\bigr|\\
&\le \bigl|-Q - (BK - KB)\bigr| + \bigl|B(K_N - K)\bigr| + \bigl|(K_N-K)B)\bigr| < \frac{2C}{N^r},
\end{split}
\]
where we have used that $K$ satisfies \eqref{Eq GsatDataforDifference+} with $E_1=-Q$ and that multiplication by the matrix $B$ does not change the matrix norm. Similarly for \eqref{Estimate GC2}.
\end{proof}

Consider the following minimization problem
\begin{equation}\label{LeastSquaresMin}
\left\{a_{n}, b_n, c_{n}, d_n\right\}_{n=0}^{N} = \operatorname*{arg\, min}_{\left\{a_{n}, b_n, c_{n}, d_n\right\}_{n=0}^{N}\subset \mathbb{C}}
\left\|
\frac{1}{2}Q(x)
+\sum_{n=0}^{N}\mathcal {N}_{n}(x)\begin{pmatrix}a_{n} & b_n\\c_{n} & d_n
\end{pmatrix}
\right\|^2_{L_2(0,b)}.
\end{equation}
We refer the reader to Appendix \ref{Append5} for reduction of this minimization problem to the solution of two systems of linear equations.

\begin{theorem}\label{Th AATOForDiracL2norm}Let $Q\in C^r\bigl([0,b],\mathcal{M}_2\bigr)$ for some $r\in \mathbb{N}_0$ and $K$ be the integral kernel of the transmutation operator. For every $N>r$, let the coefficients $\left\{a_{n}, b_n, c_{n}, d_n\right\}_{n=0}^{N}\subset \mathbb{C}$ be obtained as the least squares solution of \eqref{LeastSquaresMin} and let us define the approximate kernel $K_N$ by \eqref{Eq ApproxIntKernelbyK_n}. Then for every $x\in [0,b]$ the following estimate holds
\begin{equation}\label{Eq EstimateIntegralKernerL2norm}
\left\|K(x,\cdot_{t})-K_{N}(x,\cdot_{t})\right\|_{L_{2}(-x,x)}< \frac{2C}{N^r},
\end{equation}
where the constant $C$ does not depend on $x$ and $N$.
\end{theorem}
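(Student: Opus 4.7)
The plan is to combine the least-squares optimality with the existence of a good candidate furnished by Corollary~\ref{Cor K SmoothQ}, and then to propagate the resulting $L_2$ bound on the Goursat data through Proposition~\ref{Pp EstimateInL2x-Norm}.

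First, Corollary~\ref{Cor K SmoothQ} supplies an approximate kernel $K_N^{(0)}$ of the form \eqref{Eq ApproxIntKernelbyK_n} satisfying \eqref{Estimate GC1}--\eqref{Estimate GC2}; write its coefficient matrices as $C_n^{(0)}$. Halving those two bounds, invoking \eqref{FirstConditionP}--\eqref{SecondConditionP} together with the representation \eqref{Eq4}, and using the triangle inequality, one obtains
\[
\sup_{x\in[0,b]}\left|\frac{1}{2}Q(x)+\sum_{n=0}^{N}\mathcal{N}_n(x)C_n^{(0)}\right|\le \frac{2C}{N^r},
\]
and hence
\[
\left\|\frac{1}{2}Q+\sum_{n=0}^{N}\mathcal{N}_n C_n^{(0)}\right\|_{L_2(0,b)}\le \frac{2C\sqrt{b}}{N^r}.
\]
Since the least-squares minimizer of \eqref{LeastSquaresMin} cannot produce a larger residual than any specific feasible choice, the coefficient matrices $C_n$ defining the LS-based kernel $K_N$ satisfy the same $L_2$ bound.

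Next, I would run through the $L_2$ analog of Lemma~\ref{Lm ApproxQ}. Its proof consists entirely of pointwise identities built from \eqref{FirstConditionP}--\eqref{SecondConditionP} and \eqref{Eq4}--\eqref{Eq5}, together with $B$-invariance of the Frobenius norm, so the triangle inequality immediately yields $\|E_1\|_{L_2(0,b)},\,\|E_2\|_{L_2(0,b)}\le 4C\sqrt{b}/N^r$ for the Goursat defects $E_1(x):=-Q(x)-(BK_N(x,x)-K_N(x,x)B)$ and $E_2(x):=BK_N(x,-x)+K_N(x,-x)B$. A direct check, using $Q\in\mathcal{H}^-$ and $B^2=-I$, shows $E_1\in\mathcal{H}^-$ and $E_2\in\mathcal{H}^+$, so the compatibility hypotheses of Proposition~\ref{Pp EstimateInL2x-Norm} are met. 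Applying that proposition to $\tilde K:=K-K_N$, which solves \eqref{Eq DifferenceOfTwoSolts}--\eqref{Eq GsatDataforDifference-} with these $E_1,E_2$, gives
\[
\int_{-x}^{x}|K(x,t)-K_N(x,t)|^2\,dt\le \tfrac{1}{2}\bigl(\|E_1\|_{L_2(0,b)}^2+\|E_2\|_{L_2(0,b)}^2\bigr)\bigl(1+\sqrt{b}\,\|Q\|_{L_2(0,b)}\,e^{\sqrt{b}\,\|Q\|_{L_2(0,b)}}\bigr).
\]
Taking square roots and absorbing the $b$- and $Q$-dependent factors into a fresh constant independent of $x$ and $N$ yields \eqref{Eq EstimateIntegralKernerL2norm}.

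The proof is essentially a chain of majorizations, and the main point requiring attention is the bookkeeping of constants: each intermediate inequality must carry a constant independent of $N$ and $x\in[0,b]$. The only non-cosmetic step is the $L_2$ upgrade of Lemma~\ref{Lm ApproxQ}, but since its original proof proceeds through pointwise matrix identities that are insensitive to the choice of norm, the upgrade is automatic. The genuinely hard work --- the $C^r$ smoothness estimate for $K$ and the $L_2$ well-posedness of the Goursat problem --- has already been carried out in Corollary~\ref{Cor K SmoothQ} and Proposition~\ref{Pp EstimateInL2x-Norm}.
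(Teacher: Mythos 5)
Your proposal is correct and follows essentially the same route as the paper: use Corollary \ref{Cor K SmoothQ} to produce a competitor whose half-sum residual is uniformly $O(N^{-r})$, integrate to get an $L_2$ bound, invoke least-squares optimality, and then push the resulting bound on the Goursat defects through Lemma \ref{Lm ApproxQ} and Proposition \ref{Pp EstimateInL2x-Norm}. The only difference is that you spell out the $L_2$ upgrade of Lemma \ref{Lm ApproxQ} and the compatibility conditions $E_1\in\mathcal{H}^-$, $E_2\in\mathcal{H}^+$, which the paper leaves implicit.
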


\begin{proof}
Let $N>r$ be fixed. By Corollary \ref{Cor K SmoothQ} there exist a constant $C_1$, independent of $N$, and coefficients $\{\tilde a_{n}, \tilde b_n, \tilde c_n, \tilde d_n\}_{n=0}^{N}\subset \mathbb{C}$ such that the approximate kernel $\tilde K_N$ defined from these coefficients via \eqref{Eq ApproxIntKernelbyK_n} satisfies
\begin{gather*}
    \sup_{x\in [0,b]} \bigl| -Q - (B\tilde K_N(x,x) - \tilde K_N(x,x)B)\bigr| < \frac{2C}{N^r},\\
    \sup_{x\in [0,b]} \bigl| B \tilde K_N(x,-x) + \tilde K_N(x,-x)B)\bigr| < \frac{2C}{N^r}.
\end{gather*}
Hence the half-sum of the Goursat data satisfies (see \eqref{Eq4})
\begin{equation}\label{Estimate1}
\left|
-\frac{1}{2}Q(x)
-\sum_{n=0}^{N}\mathcal {N}_{n}(x)\begin{pmatrix}\tilde a_{n} & \tilde b_n\\ \tilde c_{n} & \tilde d_n
\end{pmatrix}
\right|< \frac{2C}{N^r}
\end{equation}
for every $x\in [0,b]$. Integrating \eqref{Estimate1} from 0 to $x$ we obtain that
\begin{equation*}
\left\|
-\frac{1}{2}Q(t)
-\sum_{n=0}^{N}\mathcal {N}_{n}(t)\begin{pmatrix}\tilde a_{n} & \tilde b_n\\ \tilde c_{n} & \tilde d_n
\end{pmatrix}
\right\|_{L_2(0,b)}< \frac{2C\sqrt b}{N^r},
\end{equation*}
that is, we have a set of coefficients guaranteing at least $\frac{2C\sqrt b}{N^r}$ as the result of  the minimization problem \eqref{LeastSquaresMin}. The least squares method provides
coefficients $\left\{a_{n}, b_n, c_{n}, d_n\right\}_{n=0}^{N}\subset \mathbb{C}$ for which the right-hand side in the problem \eqref{LeastSquaresMin} can not be larger, and the statement follows from Proposition \ref{Pp EstimateInL2x-Norm} and Lemma \ref{Lm ApproxQ}.
\end{proof}

\section{Approximation of the solutions of Dirac equation}\label{Sect7}
Let $C(x,\lambda)$ and $S(x,\lambda)$ be the solutions of the Dirac equation \eqref{DiracSystem} satisfying the initial conditions $\left(1,0\right)^{T}$ and $\left(0,1\right)^{T}$ at $x=0$ respectively. It follows from the definition of the transmutation operator that
\begin{equation*}
C(x,\lambda)  = T\begin{pmatrix}{\cos\lambda x}\\
                     {\sin\lambda x}
			\end{pmatrix}\qquad \text{and}\qquad
S(x,\lambda)=
T\begin{pmatrix}{-\sin\lambda x}\\
                     {\cos\lambda x}
			\end{pmatrix}.
\end{equation*}
Consider the following approximations to $C(x,\lambda)$ and $S(x,\lambda)$
\begin{align*}
C_{N}(x,\lambda)&=
\begin{pmatrix}{\cos\lambda x}\\
                {\sin\lambda x}
\end{pmatrix}
+
\int_{-x}^{x}
K_{N}(x,t)\begin{pmatrix}{\cos\lambda t}\\
                         {\sin\lambda t}
          \end{pmatrix}\,
dt,\\
S_{N}(x,\lambda)&=
\begin{pmatrix}{-\sin\lambda x}\\
               {\cos\lambda x}
\end{pmatrix}
+
\int_{-x}^{x}
K_{N}(x,t)\begin{pmatrix}{-\sin\lambda t}\\
                         {\cos\lambda t}
					\end{pmatrix}\,
dt,
\end{align*}
where $K_{N}$ is an approximation of the form \eqref{Eq ApproxIntKernelbyK_n} of the integration kernel $K$. According to Lemma \ref{Lm KnAsMatrixPolyFunction}, the function $K_N$ has the following form
\[
K_{N}(x,t)=\sum_{n=0}^{N}
\mathcal{K}_{n}(x)t^{n},
\]
where the coefficients  $\mathcal{K}_{n}$ are given by \eqref{Eq KnEvenPowert} and \eqref{Eq KnOddPowert}. Hence
\begin{align}
C_{N}(x,\lambda)&=
\begin{pmatrix}{\cos\lambda x}\\
                {\sin\lambda x}
\end{pmatrix}
+
\sum_{n=0}^N \mathcal{K}_{n}(x)\int_{-x}^{x}
\begin{pmatrix}{t^n\cos\lambda t}\\
                         {t^n\sin\lambda t}
          \end{pmatrix}\,
dt, \label{Eq ApproximateCosineSol_DiracSys1} \\
S_{N}(x,\lambda)&=
\begin{pmatrix}{-\sin\lambda x}\\
               {\cos\lambda x}
\end{pmatrix}
+
\sum_{n=0}^N \mathcal{K}_{n}(x)\int_{-x}^{x}
\begin{pmatrix}{-t^n\sin\lambda t}\\
                         {t^n\cos\lambda t}
					\end{pmatrix}\,
dt.\label{Eq ApproximateSineSol_DiracSys1}
\end{align}
The integrals here can be easily calculated explicitly. For example, the following formulas can be used \cite[ 2.633]{GradshteinAndRizhik}
\begin{align*}
\int t^{k} \sin\lambda t\,dt&=-\sum_{j=0}^{k}j!\binom{k}{j}\frac{t^{k-j}}{\lambda^{j+1}}\cos\left(\lambda t+\frac{j\pi}{2}\right),\\
\int t^{k} \cos\lambda t\,dt&=\sum_{j=0}^{k}j!\binom{k}{j}\frac{t^{k-j}}{\lambda^{j+1}}\sin\left(  \lambda t+\frac{j\pi}{2}\right),
\end{align*}
or alternatively the integrals can be calculated recursively.

The errors of approximations of the solutions can be bounded independently on the size of the spectral parameter, see \eqref{Estimate for Solution}. More precisely, the following result holds.
\begin{proposition}
Let $Q\in C^r\bigl([0,b],\mathcal{M}_2\bigr)$ for some $r\in \mathbb{N}_0$. For every $N>r$, let the coefficients $\left\{a_{n}, b_n, c_{n}, d_n\right\}_{n=0}^{N}\subset \mathbb{C}$ be obtained as the least squares solution of \eqref{LeastSquaresMin} and let us define approximate solutions by \eqref{Eq ApproximateCosineSol_DiracSys1} and \eqref{Eq ApproximateSineSol_DiracSys1}. Then for every $\lambda\in\mathbb{R}$ the following estimates hold
\begin{equation}\label{EstimateSols}
    \bigl|C(x,\lambda) - C_N(x,\lambda)\bigr|\le \frac{2C\sqrt{x}}{N^r}\qquad \text{and}\qquad \bigl|S(x,\lambda) - S_N(x,\lambda)\bigr|\le \frac{2C\sqrt{x}}{N^r},
\end{equation}
where the constant $C$ does not depend on $N$, $\lambda$ and $x$.
\end{proposition}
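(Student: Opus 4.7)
The plan is to combine two ingredients already established in the paper: the $L_{2}$ kernel estimate from Theorem \ref{Th AATOForDiracL2norm} and the Cauchy--Schwarz computation carried out in \eqref{Estimate for Solution}. These two pieces fit together in an entirely mechanical way to give the claimed uniform-in-$\lambda$ bound.

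First I would recall that by definition of the transmutation operator and of the approximate solutions, the error admits the explicit integral representation
\begin{equation*}
C(x,\lambda)-C_{N}(x,\lambda)=\int_{-x}^{x}\bigl(K(x,t)-K_{N}(x,t)\bigr)\begin{pmatrix}\cos\lambda t\\ \sin\lambda t\end{pmatrix}\,dt,
\end{equation*}
and analogously for $S-S_{N}$ with $(\cos\lambda t,\sin\lambda t)^{T}$ replaced by $(-\sin\lambda t,\cos\lambda t)^{T}$. This is exactly the quantity whose norm was already bounded in \eqref{Estimate for Solution}: applying the Cauchy--Schwarz inequality componentwise and using that $\cos^{2}\lambda t+\sin^{2}\lambda t\equiv 1$, one obtains
\begin{equation*}
\bigl|C(x,\lambda)-C_{N}(x,\lambda)\bigr|\le 2\sqrt{x}\,\varepsilon(x),\qquad \varepsilon(x)=\|K(x,\cdot)-K_{N}(x,\cdot)\|_{L_{2}(-x,x)}.
\end{equation*}
The same calculation works verbatim for $S-S_{N}$, because $(-\sin\lambda t,\cos\lambda t)^{T}$ has the same Euclidean norm as $(\cos\lambda t,\sin\lambda t)^{T}$; the bound is independent of $\lambda$ since only the pointwise identity $\sin^{2}+\cos^{2}=1$ is used.

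Second, under the smoothness assumption $Q\in C^{r}\bigl([0,b],\mathcal{M}_{2}\bigr)$ and with the coefficients chosen by the least squares minimization \eqref{LeastSquaresMin}, Theorem \ref{Th AATOForDiracL2norm} provides a constant $C$, independent of $x$ and $N$, such that $\varepsilon(x)<\frac{2C}{N^{r}}$ for every $x\in[0,b]$ and every $N>r$. Substituting this into the previous inequality yields
\begin{equation*}
\bigl|C(x,\lambda)-C_{N}(x,\lambda)\bigr|\le\frac{4C\sqrt{x}}{N^{r}},\qquad \bigl|S(x,\lambda)-S_{N}(x,\lambda)\bigr|\le\frac{4C\sqrt{x}}{N^{r}}.
\end{equation*}
The constant in the proposition statement is written as $2C$; since $C$ here is only determined up to the multiplicative factor coming from Proposition \ref{Pp EstimateInL2x-Norm} and the constant in best polynomial approximation, one can simply rename the absolute constant to absorb this factor, obtaining exactly the estimates \eqref{EstimateSols}.

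There is no real obstacle in this argument --- everything needed has been done. The only points requiring a line of justification are that the Cauchy--Schwarz bound in \eqref{Estimate for Solution} is uniform in $\lambda\in\mathbb{R}$ (which is clear since the trigonometric factor is bounded by $1$) and that the same argument applies to $S_{N}$ (which follows from the obvious symmetry between the two initial data). The whole proof is therefore essentially a two-line citation of Theorem \ref{Th AATOForDiracL2norm} followed by the computation \eqref{Estimate for Solution}.
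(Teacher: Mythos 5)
Your proposal is correct and takes exactly the same route as the paper, whose proof is a one-line citation of Theorem \ref{Th AATOForDiracL2norm} together with the Cauchy--Schwarz computation \eqref{Estimate for Solution}; your only addition is the (accurate) observation that the factor $2\sqrt{x}\cdot\frac{2C}{N^{r}}=\frac{4C\sqrt{x}}{N^{r}}$ matches the stated bound $\frac{2C\sqrt{x}}{N^{r}}$ after renaming the generic constant $C$.
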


\begin{proof}
The proof follows from Theorem \ref{Th AATOForDiracL2norm} and \eqref{Estimate for Solution}.
\end{proof}

\section{Application to one-dimensional Schr\"{o}dinger equation}\label{Sect8}
Consider the equation
\begin{equation}\label{Eq Schrod}
    -y'' + q_1(x) y = \omega^2 y, \qquad x\in [0,b],
\end{equation}
where $q_1\in C[0,b]$ is a complex valued function. Let $f$ be a solution of \eqref{Eq Schrod} corresponding to $\omega=0$, i.e.,
\[
-f''+q_1f = 0,
\]
and such that $f(0) = 1$ and $f$ does not vanish on the whole $[0,b]$. Such solution always exists, see \cite[Remark 5]{KrPorter2010} and \cite{CamporesiScala2011}. Consider new functions $u = y$ and $v = \frac{1}{\omega} f\left(\frac y f\right)'$. Then (see \cite[Example 2.1]{KTG1}) equation \eqref{Eq Schrod} is equivalent to the following Dirac equation
\begin{equation}\label{Equiv Dirac}
    v' + q(x) v = \omega u,\qquad  -u'+ q(x) u = \omega v,
\end{equation}
where $q:= \frac{f'}{f}$. The system \eqref{Equiv Dirac} is also known as one-dimensional Dirac equation
with a Lorentz scalar potential, see \cite{KrT2013} and references therein.

In this section we compare the results obtained for equation \eqref{Eq Schrod} in \cite{KrT2015} with those obtained using the Dirac equation approach. Even though the integral kernel of the transmutation operator for the Dirac equation \eqref{Equiv Dirac} is formed by the integral kernels of the transmutation operators for \eqref{Eq Schrod} and its Darboux transformed equation, see \cite{KrT2012}, the proposed approach leads to a different analytic approximations.

Let us first briefly summarize some facts from \cite{KrT2015}. Let $h:=f'(0)$. There exists a transmutation operator $T_f$ for the pair of operators $-\frac{d^2}{dx^2}+q_1(x)$ and $-\frac{d^2}{dx^2}$ given by
\begin{equation}\label{Eq VolIntOperatorForSchodinger}
T_{f}u(x)=u(x)+\int_{-x}^{x}K_{f}(x,t)u(t)\,dt.
\end{equation}
Its integral kernel $K_f$ satisfies the following Goursat problem
\begin{gather*}
\Biggl(\frac{d^{2}}{dx^{2}}-q_1(x)\Biggr)K_f(x,t)=\frac{d^{2}}{dx^{2}}K_f(x,t),\\
K_f(x,x)=\frac{h}{2}+\frac{1}{2}\int_{0}^{x}q(s)\,ds, \qquad K_f(x,-x)=\frac{h}{2}.
\end{gather*}

Let us consider together with equation \eqref{Eq Schrod}  its Darboux transformed equation
\begin{equation}\label{Eq SchrodDarboux}
-\frac{d^{2}y}{dx^{2}}+q_{2}(x)y = \omega^2 y,
\end{equation}
where $q_{2}(x)=2\left(f'/f\right)^{2}-q_{1}(x)$. Note that $1/f$ is a particular solution of \eqref{Eq SchrodDarboux}. By $T_{1/f}$ we denote the transmutation operator for \eqref{Eq SchrodDarboux} and let $K_{1/f}$ be its integral kernel. The meaning of the indices $f$ and $1/f$ is in the following: the functions $f$ and $1/f$ uniquely identify the potentials via $q_1 = f''/f$ and $q_2 = (1/f)''/(1/f)$ and the transmutation operators via \[
T_f[1] = f\qquad\text{and}\qquad T_{1/f}[1] = \frac 1f.
\]

Consider two sequences of recursive integrals (see \cite{KrCV08}, \cite{KrPorter2010})
\begin{equation*}
X^{(0)}\equiv1,\qquad X^{(n)}(x)=n\int_{0}^{x}X^{(n-1)}(s)\left(
f^{2}(s)\right)  ^{(-1)^{n}}\,\mathrm{d}s,\qquad n=1,2,\ldots
\end{equation*}
and
\begin{equation*}
\widetilde{X}^{(0)}\equiv1,\qquad\widetilde{X}^{(n)}(x)=n\int_{0}%
^{x}\widetilde{X}^{(n-1)}(s)\left(  f^{2}(s)\right)  ^{(-1)^{n-1}}%
\,\mathrm{d}s,\qquad n=1,2,\ldots.
\end{equation*}
The families of functions $\left\{
\varphi_{k}\right\}  _{k=0}^{\infty}$ and $\left\{  \psi_{k}\right\}
_{k=0}^{\infty}$, called the systems of formal powers associated with $f$, are constructed according to the rules
\begin{equation*}
\varphi_{k}(x)=
\begin{cases}
f(x)X^{(k)}(x), & k\text{\ odd},\\
f(x)\widetilde{X}^{(k)}(x), & k\text{\ even}%
\end{cases}
\qquad\text{and}\qquad
\psi_{k}(x)=
\begin{cases}
\widetilde{X}^{(k)}(x)/f(x), & k\text{\ odd,}\\
X^{(k)}(x)/f(x), & k\text{\ even}.
\end{cases}
\end{equation*}
The following mapping properties are established in \cite{CamposKrT2012}
\begin{equation}\label{MappingThm Schrod}
T_{f}x^{k}=\varphi_{k}(x)\qquad\text{and}\qquad T_{1/f}x^{k}=\psi_{k}(x), \qquad k\in\mathbb {N}_{0}.
\end{equation}

Recall also the following definitions of generalized wave polynomials \cite{KrT2015}
\begin{align}
u_{0}&=\varphi _{0}(x),\quad u_{2n-1}(x,t)=\sum_{\text{even }k=0}^{n}\binom{n%
}{k}\varphi _{n-k}(x)t^{k},\quad u_{2n}(x,t)=\sum_{\text{odd }k=1}^{n}\binom{%
n}{k}\varphi _{n-k}(x)t^{k},  \label{um}\\
v_{0}&=\psi _{0}(x),\quad v_{2n-1}(x,t)=\sum_{\text{even }k=0}^{n}\binom{n}{k}%
\psi _{n-k}(x)t^{k},\quad v_{2n}(x,t)=\sum_{\text{odd }k=1}^{n}\binom{n}{k}%
\psi _{n-k}(x)t^{k}.  \label{vm}
\end{align}

We notice that a non-vanishing solution $(f_{0},g_{0})^{T}$ of \eqref{Equiv Dirac} corresponding to $\omega =0$ can  be chosen as
\begin{equation}\label{Eq SolHomDiracSysQzero}
f_{0}(x)=\exp\left(\int_{0}^{x}
q(s)\,ds\right) = f(x) \qquad\text{and}\qquad g_{0}(x)=\exp\left(-\int_{0}^{x}q(s)\,ds\right) = \frac{1}{f(x)}.
\end{equation}
Consider the formal powers $\left\{\Phi_{k}\right\}_{k=0}^{\infty}$ and $\left\{\Psi_{k}\right\}_{k=0}^{\infty}$ given by \eqref{Phisubk} and \eqref{Psisubk}. One can easily verify that
\begin{equation}\label{Phi and phi}
\Phi_{k}=
\begin{pmatrix}{\varphi_{k}}\\
			               {0}
\end{pmatrix}
\qquad \text{and}\qquad
\Psi_{k}=\begin{pmatrix}{0}\\
           {\psi_{k}}
	\end{pmatrix}, \qquad k=0,1,2,\ldots
\end{equation}

Consider transmutation operator $T$ for the Dirac equation \eqref{Equiv Dirac}.
Theorem \ref{Th MappingTheorem} gives us
\begin{equation}\label{Eq MappingtThcaseQ0}
T\begin{pmatrix}
                      {x^{k}}\\
                         {0}
               \end{pmatrix}
=
\begin{pmatrix}{\varphi_{k}(x)}\\
			               {0}
\end{pmatrix}
\qquad \text{and}\qquad
T
\begin{pmatrix}{0}\\
               {x^{k}}
\end{pmatrix}
=\begin{pmatrix}{0}\\
           {\psi_{k}(x)}
	\end{pmatrix}, \quad k=0,1,2,\ldots
\end{equation}

\begin{proposition}\label{Pp IntegralKernelPzero}
The integral kernel $K$ of the transmutation operator $T$ has the form
\begin{equation}\label{Sol IntegralKernelPzero}
K(x,t)=\begin{pmatrix}
                      K_{f}(x,t) & 0 \\
                               0 & K_{1/f}(x,t) \\
                    \end{pmatrix}
\end{equation}
\end{proposition}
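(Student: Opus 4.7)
The plan is to reduce the claim to the mapping theorem together with a density/uniqueness argument. Define the candidate operator
\[
\widetilde T Y(x) := Y(x) + \int_{-x}^{x} \begin{pmatrix} K_f(x,t) & 0 \\ 0 & K_{1/f}(x,t) \end{pmatrix} Y(t)\,dt,
\]
so that on vector-valued functions $\widetilde T$ acts coordinate-wise as the Schr\"odinger transmutation operators $T_f$ and $T_{1/f}$ via \eqref{Eq VolIntOperatorForSchodinger} and its analogue for $T_{1/f}$. The goal becomes to show $T = \widetilde T$; uniqueness of the continuous Volterra kernel will then finish the proof.

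The first step is to check that $T$ and $\widetilde T$ agree on every vector-valued monomial. Applying $\widetilde T$ to $(x^k,0)^T$ and $(0,x^k)^T$ and using the scalar mapping identities \eqref{MappingThm Schrod} gives $(\varphi_k,0)^T$ and $(0,\psi_k)^T$ respectively, which by the Dirac mapping identities \eqref{Eq MappingtThcaseQ0} coincides with $T(x^k,0)^T$ and $T(0,x^k)^T$. By linearity, $T$ and $\widetilde T$ agree on every polynomial vector function. Since both operators are bounded on $C([-b,b],\mathbb{C}^2)$ (standard for Volterra operators with continuous kernels) and polynomial vector functions are dense there by Weierstrass approximation, the identity extends to all continuous vector-valued functions.

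Equating the two integral representations on every $Y\in C([-b,b],\mathbb{C}^2)$ and subtracting yields
\[
\int_{-x}^{x}\left[K(x,t) - \begin{pmatrix} K_f(x,t) & 0 \\ 0 & K_{1/f}(x,t) \end{pmatrix}\right]Y(t)\,dt = 0
\]
for all $x\in[0,b]$. Testing the four scalar entries of the difference kernel against coordinate vectors $Y$ whose supports concentrate near any interior point $t_0\in(-x,x)$ forces the difference kernel to vanish, so $K(x,t)=\mathrm{diag}(K_f(x,t),K_{1/f}(x,t))$ throughout $\Omega^+$.

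The only place that needs preliminary work is the identification \eqref{Phi and phi}: verifying that with the choice $(f_0,g_0)=(f,1/f)$ from \eqref{Eq SolHomDiracSysQzero}, the Dirac--SPPS formal powers $\Phi_k$, $\Psi_k$ from \eqref{Phisubk}--\eqref{Psisubk} collapse into the scalar Schr\"odinger formal powers $\varphi_k$, $\psi_k$ embedded along $e_1$ and $e_2$. This I expect to be the most technical step, but it follows by straightforward induction on $k$ from the recurrences \eqref{X0Y0}--\eqref{Yn} and \eqref{XtYt0}: because the off-diagonal entry $p$ of the potential in \eqref{Equiv Dirac} vanishes, the $p/f^2$ and $p/g^2$ integrals disappear, the surviving terms reduce to those defining $X^{(n)}$ and $\widetilde X^{(n)}$ in the Schr\"odinger setup, and the normalization $f(0)(1/f)(0)=1$ matches the hypothesis of Theorem~\ref{Th DiracSPPS}. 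Once \eqref{Phi and phi} is in hand, the remainder of the argument is purely operator-theoretic.
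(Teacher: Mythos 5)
Your proposal is correct and follows essentially the same route as the paper: the paper likewise introduces the auxiliary operator with the diagonal kernel $\operatorname{diag}(K_f,K_{1/f})$, checks via \eqref{MappingThm Schrod}, \eqref{Phi and phi} and \eqref{Eq MappingtThcaseQ0} that it agrees with $T$ on all monomial vectors $(x^m,x^n)^T$, and concludes that the continuous difference kernel, being orthogonal to all such vectors, must vanish. Your extra detour through boundedness and density of polynomials in $C([-b,b],\mathbb{C}^2)$ is harmless but unnecessary, and your flagged verification of \eqref{Phi and phi} is indeed the only computational ingredient, which the paper dispatches with ``one can easily verify.''
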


\begin{proof} Consider the following operator $\mathcal{T}$
\[
\mathcal{T}\begin{pmatrix}y_{1}(x) \\
                          y_{2}(x)
              \end{pmatrix}= \begin{pmatrix}y_{1}(x) \\
                                            y_{2}(x)
               \end{pmatrix}+
	\int_{-x}^{x} \begin{pmatrix}K_{f}(x,t) & 0 \\
                                        0 & K_{1/f}(x,t)
                \end{pmatrix}
   \begin{pmatrix}y_{1}(t) \\
                  y_{2}(t)
   \end{pmatrix}	
	\,dt,
\]
let $\mathcal{K}$ denotes its integral kernel.

Due to \eqref{MappingThm Schrod}, \eqref{Phi and phi} and \eqref{Eq MappingtThcaseQ0} we have
\[
\mathcal{T}\begin{pmatrix}x^m \\
                                       x^n
                  \end{pmatrix}
=
\begin{pmatrix}\varphi_{m}(x) \\
                                       \psi_{n}(x)
                  \end{pmatrix}
= \Phi_m(x) + \Psi_n(x)
=T\begin{pmatrix}x^m \\
                                       x^n
                  \end{pmatrix}.
\]
Therefore
\[
 \begin{pmatrix}{0} \\
                {0}
   \end{pmatrix}
		=\left(T-\mathcal{T}\right)\begin{pmatrix}{x^{m}} \\
                                                {x^{n}}
                                  \end{pmatrix}=
	\int_{-x}^{x} \left(K(x,t)-\mathcal{K}(x,t)\right)\begin{pmatrix}{t^{m}} \\
                                                                                  {t^{n}}
                                                                   \end{pmatrix} dt.
\]
Since each row of matrix-valued function $K(x,t)-\mathcal{K}(x,t)$ is orthogonal to all vector-valued functions in the form $(x^{m}, x^{n})^{T}$, we obtain that continuous functions $K(x,t)$ and $\mathcal{K}(x,t)$ coincide.
\end{proof}

Proposition \ref{Pp IntegralKernelPzero} makes it possible to apply the analytic approximation method developed in this paper for the integral kernels $K_{f}$ and $K_{1/f}$. A question arises naturally, what is the appearance of the functions  $\mathcal {N}_{n}$ involved in Theorem \ref{Th AATOForDiracL2norm}. While it is true that the construction of Theorem \ref{Th AATOForDiracL2norm} is based on the ideas from \cite{KrT2015}, it is worth mentioning that Theorem \ref{Th AATOForDiracL2norm} does not match Theorem 5.1 from \cite{KrT2015}. In fact, Theorem \ref{Th AATOForDiracL2norm} offers a different possibility to approximate the integral kernel in \eqref{Eq VolIntOperatorForSchodinger}. Namely, the approximation of the data at $x = t$ and $x =-t$ is obtained by utilizing generalized derivatives of the systems  $\left\{\boldsymbol{c}_{n}(x)\right\}_{n=1}^{\infty}:=\left\{{u}_{2n-1}(x,x)\right\}_{n=1}^{\infty}$ and $\left\{\boldsymbol{s}_{n}(x)\right\}_{n=1}^{\infty}:=\left\{u_{2n}(x,x)\right\}_{n=1}^{\infty}$ from \cite{KrT2015}.

By induction on $n$ and using the already known relations
\begin{align*}
\partial_{x}\varphi_{k}&=\frac{f'}{f}\varphi_{k}+k\psi_{k-1},& k&=0,1,\ldots\\
f\partial_{x}\Bigl(\frac{1}{f}(x^{l}\varphi_{k})\Bigr)&=lx^{l-1}\varphi_{k}+kx^{l}\psi_{k-1},& k&=0,1,\ldots \ \text{and}\  l\geq 0.
\end{align*}
we obtain
\begin{equation}
\mathcal {N}_{n}(x)=\frac{1}{n+1}f\partial\frac{1}{f}
                                                      \begin{pmatrix}    {0}       &{\boldsymbol{c}_{n+1}}\\
                                                            {-\boldsymbol{s}_{n+1}}&{0}
			                                                 \end{pmatrix}.\label{Eq NnPzero}
\end{equation}
Hence the minimization problem \eqref{LeastSquaresMin} reduces to two independent problems
\begin{equation}\label{Approx Schrod}
    \min_{a_0,\ldots,a_N}\left\| \frac{f'}{2f} -\sum_{n=0}^{N}\frac{a_{n}}{n+1} f\partial\frac{1}{f}\boldsymbol{s}_{n+1}\right\|_{L_2(0,b)}\quad\text{and}\quad
    \min_{d_0,\ldots,d_N}\left\| \frac{f'}{2f} +\sum_{n=0}^{N}\frac{d_{n}}{n+1} f\partial\frac{1}{f}\boldsymbol{c}_{n+1}\right\|_{L_2(0,b)}.
\end{equation}

Then we obtain from Theorem \ref{Th AATOForDiracL2norm} the following result.
\begin{corollary}\label{Corr Schrod}
Let $q_1\in C^p[0,b]$ for some $p\in \mathbb{N}_0$.
For every $N>r$, let the coefficients $\left\{a_n\right\}_{n=0}^{N}\subset \mathbb{C}$ and $\left\{d_n\right\}_{n=0}^{N}\subset \mathbb{C}$ be obtained as the least squares solutions of \eqref{Approx Schrod} and let us define approximate kernels $K_{f,N}$ and $K_{1/f,N}$ by
\begin{equation}\label{Approx Kernels Schrod}
K_{f,N}=a_{0}u_{0}+\sum_{n=1}^{N}a_{n}u_{2n-1}+\sum_{n=1}^{N}d_{n}u_{2n}\quad\text{and}\quad
K_{1/f,N}=-d_{0}v_{0}-\sum_{n=1}^{N}a_{n}v_{2n} - \sum_{n=1}^N d_{n}v_{2n-1}.
\end{equation}
Then for every $x\in [0,b]$ the following estimates hold
\begin{equation}\label{Eq EstimatelKernerSchrod}
\left\|K_f(x,\cdot_{t})-K_{f,N}(x,\cdot_{t})\right\|_{L_{2}(-x,x)}< \frac{C}{N^{r+1}},\qquad \left\|K_{1/f}(x,\cdot_{t})-K_{1/f,N}(x,\cdot_{t})\right\|_{L_{2}(-x,x)}< \frac{C}{N^{r+1}},
\end{equation}
where the constant $C$ does not depend on $x$ and $N$.
\end{corollary}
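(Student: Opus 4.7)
My plan is to obtain the result as a specialization of Theorem \ref{Th AATOForDiracL2norm} applied to the Dirac system \eqref{Equiv Dirac} equivalent to \eqref{Eq Schrod}, exploiting the block structure afforded by Proposition \ref{Pp IntegralKernelPzero} and formula \eqref{Eq NnPzero}. The first step is to upgrade the regularity of the Dirac potential: since $f$ satisfies $-f''+q_1 f = 0$ with $f$ non-vanishing, $q_1\in C^r[0,b]$ forces $f\in C^{r+2}[0,b]$ and hence $q=f'/f\in C^{r+1}[0,b]$. The Dirac potential $Q=\begin{pmatrix}0&q\\q&0\end{pmatrix}$ associated with \eqref{Equiv Dirac} therefore lies in $C^{r+1}\bigl([0,b],\mathcal M_2\bigr)$, so Theorem \ref{Th AATOForDiracL2norm} applied with $r$ replaced by $r+1$ will give an $L_2$-estimate of order $1/N^{r+1}$ for the full matrix kernel.

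The key algebraic step is to verify that the matrix least squares problem \eqref{LeastSquaresMin} decouples into the two scalar problems \eqref{Approx Schrod}. Using \eqref{Eq NnPzero} one has
\[
\mathcal N_n(x)=\begin{pmatrix}0 & \alpha_n(x)\\ -\beta_n(x) & 0\end{pmatrix},\qquad \alpha_n=\tfrac{1}{n+1}f\partial\tfrac 1f\boldsymbol c_{n+1},\quad \beta_n=\tfrac{1}{n+1}f\partial\tfrac 1f\boldsymbol s_{n+1},
\]
and since $\tfrac12 Q$ has only anti-diagonal entries equal to $q/2=f'/(2f)$, a direct computation of $\tfrac12 Q+\sum \mathcal N_n C_n$ shows that its $(1,1)$ and $(2,2)$ entries depend only on $c_n$ and $b_n$ respectively, while the $(1,2)$ and $(2,1)$ entries depend only on $d_n$ and $a_n$ respectively. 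The Frobenius norm squared in \eqref{LeastSquaresMin} is therefore a sum of four independent scalar least squares problems; the two involving $b_n$ and $c_n$ are minimized by $b_n=c_n=0$, while the two involving $a_n$ and $d_n$ are precisely \eqref{Approx Schrod}.

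With $b_n=c_n=0$, the approximate matrix kernel $K_N=\sum(a_n\mathcal O_n^1+d_n\mathcal O_n^4)$ becomes block-diagonal. Using \eqref{Phi and phi} one identifies $\mathcal U_{2n-1}=(u_{2n-1},0)^T$, $\mathcal U_{2n}=(u_{2n},0)^T$, $\mathcal V_{2n-1}=(0,v_{2n-1})^T$, $\mathcal V_{2n}=(0,v_{2n})^T$, so that $\mathcal O_n^1$ and $\mathcal O_n^4$ are diagonal matrices with entries taken from the families \eqref{um}--\eqref{vm}. Collecting the diagonal entries yields precisely $K_{f,N}$ in the $(1,1)$ slot and $K_{1/f,N}$ in the $(2,2)$ slot, matching \eqref{Approx Kernels Schrod} (up to sign conventions coming from the definitions of $\mathcal O_n^4$).

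To conclude, Proposition \ref{Pp IntegralKernelPzero} gives $K(x,t)=\operatorname{diag}(K_f(x,t),K_{1/f}(x,t))$, and the preceding step gives $K_N(x,t)=\operatorname{diag}(K_{f,N}(x,t),K_{1/f,N}(x,t))$, so by the Frobenius definition of the matrix norm
\[
\left\|K_f(x,\cdot)-K_{f,N}(x,\cdot)\right\|_{L_2(-x,x)}^2+\left\|K_{1/f}(x,\cdot)-K_{1/f,N}(x,\cdot)\right\|_{L_2(-x,x)}^2=\left\|K(x,\cdot)-K_N(x,\cdot)\right\|_{L_2(-x,x)}^2.
\]
Theorem \ref{Th AATOForDiracL2norm} applied to the Dirac potential $Q\in C^{r+1}$ bounds the right-hand side by $C^2/N^{2(r+1)}$, from which each individual estimate in \eqref{Eq EstimatelKernerSchrod} follows. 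The main obstacle is the bookkeeping in the second paragraph, namely checking that after substituting \eqref{Eq NnPzero} and the off-diagonal $Q$ the four scalar minimizations truly split and give back precisely the two problems \eqref{Approx Schrod}; once this decoupling is established the rest is direct application of results already proved.
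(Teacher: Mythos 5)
Your proposal is correct and follows essentially the same route as the paper: the regularity upgrade $q_1\in C^r\Rightarrow f\in C^{r+2}\Rightarrow Q\in C^{r+1}([0,b],\mathcal M_2)$ followed by a direct application of Theorem \ref{Th AATOForDiracL2norm}, with the decoupling of \eqref{LeastSquaresMin} into the two scalar problems \eqref{Approx Schrod} being exactly the reduction the paper carries out in the discussion preceding the corollary. The extra bookkeeping you supply (the block-diagonal form of $K_N$ and the Frobenius-norm splitting) is a correct and slightly more explicit version of what the paper leaves implicit.
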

\begin{proof}
Note that for $q_1\in C^r[0,b]$ one has $f\in C^{r+2}[0,b]$, hence the potential matrix $Q$ in the Dirac equation belongs to $C^{r+1}([0,b],\mathcal{M}_2)$. Now the result directly follows from Theorem \ref{Th AATOForDiracL2norm}.
\end{proof}

\begin{remark}
Corollary \ref{Corr Schrod} provides $L_2$ analogue of Theorem 7.1 from \cite{KrT2016}.
\end{remark}

\section{Numerical illustration}\label{Sect9}
Consider a one-dimensional Dirac equation \eqref{DiracSystem} with an initial condition
\begin{equation}\label{Eq IC}
Y(0)=\begin{pmatrix}{y_{1}(0)}\\{y_{2}(0)}\end{pmatrix}=\begin{pmatrix}{a}\\{b}\end{pmatrix},
\end{equation}
or a boundary condition
\begin{equation}\label{Eq BoundaryCDirac4}
\begin{pmatrix}{u_{11}}&{u_{12}}\\{u_{21}}&{u_{22}}\end{pmatrix}\begin{pmatrix}{y_{1}(0)}\\{y_{2}(0)}\end{pmatrix}+\begin{pmatrix}{u_{13}}&{u_{14}}\\{u_{23}}&{u_{24}}\end{pmatrix}\begin{pmatrix}{y_{1}(b)}\\{y_{2}(b)}\end{pmatrix}=\begin{pmatrix}{0}\\{0}\end{pmatrix}.
\end{equation}

Based on the results of the previous sections, we propose the following algorithm for numerical solution of initial value and spectral problems for \eqref{DiracSystem}.

\begin{enumerate}
	\item Find a non-vanishing solution $Y=(f,g)^{T}$ of the equation $B\frac{dY}{dx}+Q(x)Y=0$, see \cite[Section 2.3]{KTG1} for details.
	\item Compute the vector functions $\Phi_{k}$ and $\Psi_{k}$, $k=0,\ldots,N$ using \eqref{X0Y0}--\eqref{XtYt0} and \eqref{Phisubk}--\eqref{Psisubk}.
	\item Compute the matrix functions $\mathcal {N}_{k}$, $k=0,\ldots,N$, using \eqref{Df GWaveMatricesUn}, \eqref{Df GWaveMatricesVn} and \eqref{Eq halfSumKn}.
	\item Find coefficients $\left\{a_{n}, b_n, c_{n}, d_n\right\}_{n=0}^{N}$ as the least squares solution of \eqref{LeastSquaresMin}.
Here we would like to mention that the system of functions $\mathcal {N}_{k}$, $k=0,\ldots,N$, considered in machine precision, can be almost linearly dependent. Some regularization, e.g., Tikhonov regularization, may be helpful, see \cite[Sections 2.2 and 2.5--2.7]{Kirsch} and \cite[Section 7.5]{KKTD} for details.
\item Compute the matrix functions $\mathcal{K}_{2n}$ and $\mathcal{K}_{2n+1}$ using \eqref{Eq KnEvenPowert} and \eqref{Eq KnOddPowert} respectively.
\item Compute the approximate solutions $C_{N}$ and $S_{N}$ using \eqref{Eq ApproximateCosineSol_DiracSys1} and \eqref{Eq ApproximateSineSol_DiracSys1}.
\item An approximation to the solution of the initial value problem \eqref{Eq IC} is given by $Y_{N}(x,\lambda)=aC_{N}(x,\lambda)+bS_{N}(x,\lambda)$.
\item To solve the spectral problem defined by the boundary condition \eqref{Eq BoundaryCDirac4} one has to find zeros of an approximate  characteristic function of the problem which has the form
\begin{equation}\label{Eq CaracteristicfunctionAnalAprox}
\operatorname{det}\left(M_{N}(\lambda)\right)=0,
\end{equation}
where
\begin{equation}\label{ApproxCharEq}
M_{N}(\lambda)=
\begin{pmatrix}
{u_{11}}&{u_{12}}\\
{u_{21}}&{u_{22}}
\end{pmatrix}
+
\begin{pmatrix}
{u_{13}}&{u_{14}}\\
{u_{23}}&{u_{24}}
\end{pmatrix}
\begin{bmatrix}
{C_{N}(b,\lambda)}&{S_{N}(b,\lambda)}
\end{bmatrix}.
\end{equation}
\end{enumerate}

All the steps of the proposed algorithm can be performed numerically, there is no need to calculate the integrals involved analytically. We refer the reader to \cite{KTG1}, \cite{KrT2015} and \cite{KTNavarro} for additional implementation details. The computational time and resources required by the proposed method are very similar to those of \cite{KrT2015}.

\subsection{Example: integral kernel for one-dimensional Dirac system with Lorentz scalar potential.}
Consider the following  Dirac system with Lorentz scalar potential (Example 3.4 from \cite{KrT2014})
\begin{equation}\label{Eq DiracWithTanh}
\begin{pmatrix}{0}&{1}\\
              {-1}&{0}
\end{pmatrix}\frac{dY}{dx}+\begin{pmatrix}{0}&{\tanh(x)}\\
                             {\tanh(x)}&{0}
                           \end{pmatrix}Y=\lambda Y,
\quad
Y=\begin{pmatrix}{y_1}\\{y_2}\end{pmatrix}.
\end{equation}
A non-vanishing solution of equation \eqref{Eq DiracWithTanh} for $\lambda=0$ is given by  $y=(f,g)^{T}$, where $f(x)=\cosh x$, $g(x)=1/\cosh x = \operatorname{sech} x$.
According to Proposition \ref{Pp IntegralKernelPzero} and \cite{KrT2014}, the integral kernel $K$ for this example is known in the following form
\[
K=\begin{pmatrix}{ K_{\cosh}}&{0}\\{0}&{ K_{\operatorname{sech}}}\end{pmatrix},
\]
where
\begin{align*}
K_{\cosh}(x,t)&=-\frac{1}{2}\frac{\sqrt{x^{2}-t^{2}}I_{1}(\sqrt{x^{2}-t^{2}})}{x-t},\\
K_{\operatorname{sech}}(x,t)&=-\frac{1}{2\cosh x }\int_{-t}^{x}\left(\frac{I_{0}(\sqrt{s^{2}-t^{2}})t}{s-t} +\frac{\sqrt{s^{2}-t^{2}}I_{1}(\sqrt{s^{2}-t^{2}})}{s-t}\right)\cosh s\,ds,
\end{align*}
and $I_{0}$, $I_{1}$ are the modified Bessel functions of the first kind. Even though the integral in the expression for $K_{\operatorname{sech}}$ can not be calculated in a closed form, it can be evaluated numerically which is sufficient for comparison.
On Figure \ref{fig:Lorentz potentialEx1} we show the combined error of the approximation of $K_{\cosh}$ and $K_{\operatorname{sech}}$ by $K_{\cosh,10}$ and $K_{\operatorname{sech},10}$ respectively.
\begin{figure}[htbp]
\centering
\includegraphics[width=5.2in,height=3.3in]{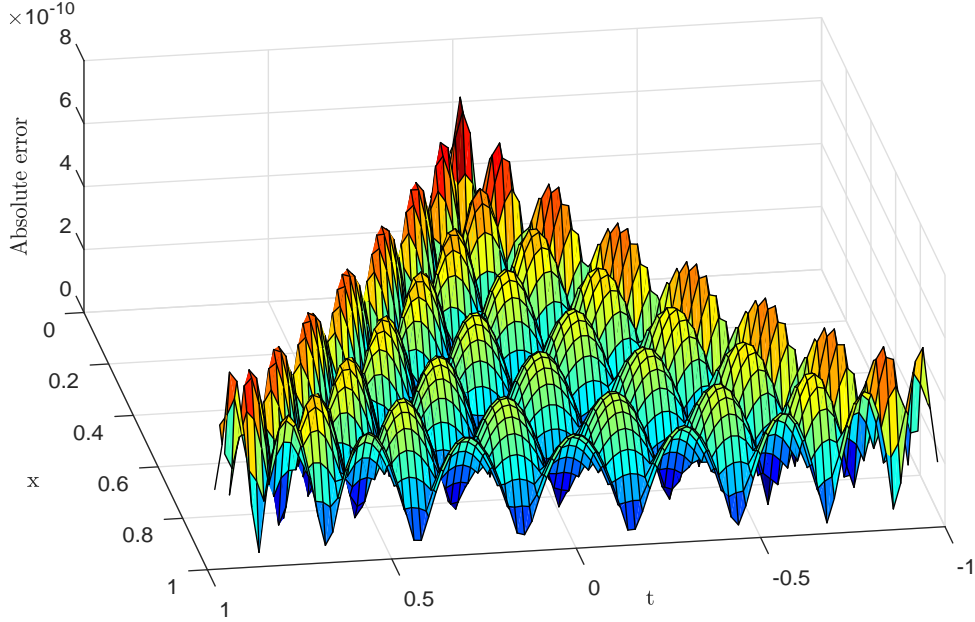}
\caption{The combined absolute error of approximation of the integral kernels $K_{\cosh}$ and $K_{\operatorname{sech}}$ obtained using the proposed method with $N=10$} \label{fig:Lorentz potentialEx1}
\end{figure}

\subsection{Example: spectral problem for a Dirac equation.}

Consider the following spectral problem (Example 3.4 from \cite{AnnabyTharwat}, see also \cite[Section 4.2]{KTG1})
\begin{equation}\label{Eq ExAnT}
\begin{pmatrix}
  {0}&{1}\\
  {-1}&{0}
\end{pmatrix}
\frac{dZ}{dx}
+\begin{pmatrix}
   {-x}&{0}\\
    {0}&{1}
	\end{pmatrix}
 Z=\lambda Z,
\quad
Z=\begin{pmatrix}
{u(x)}\\
{v(x)}
\end{pmatrix},
\quad
0\le x\le 1,									
\end{equation}
with boundary conditions
\begin{equation}\label{Eq BoundCExAnT}
u(0)=u(1)=0.
\end{equation}
Although this system does not have the form \eqref{DiracSystem} we can transform \eqref{Eq ExAnT} into \eqref{DiracSystem} using the orthogonal transformation
\begin{equation*}
Z(x)=\begin{pmatrix}
        {\cos(\varphi(x))}&{-\sin(\varphi(x))}\\
        {\sin(\varphi(x))}&{\cos(\varphi(x))}
     \end{pmatrix}
Y(x),\quad \varphi(x)=\frac{x(x-2)}{4},
\end{equation*}
see \cite{LevitanSargsjan}. It follows that the boundary value problem \eqref{Eq ExAnT}--\eqref{Eq BoundCExAnT} is equivalent to
\begin{equation*}
\begin{pmatrix}
     {0}&{1}\\
    {-1}&{0}
\end{pmatrix}
\frac{dY}{dx}+\begin{pmatrix}
                 -\frac{(x+1)}{2}\cos(2\varphi(x))&\frac{(x+1)}{2}\sin(2\varphi(x))\\
                 \frac{(x+1)}{2}\sin(2\varphi(x))&\frac{(x+1)}{2}\cos(2\varphi(x))
							\end{pmatrix}
								  Y=\lambda Y,
\end{equation*}
with the boundary conditions
\begin{equation*}
\begin{pmatrix}{1}&{0}\\
               {0}&{0}
\end{pmatrix}
Y(0)
+
\begin{pmatrix}{0}&{0}\\
               {\cos(1/4)}&{\sin(1/4)}
\end{pmatrix}
Y(1)
=0.
\end{equation*}

It can be seen from \eqref{ApproxCharEq} that an approximate characteristic equation reduces to
\begin{equation}\label{ExAnnabyCharEq}
\begin{pmatrix}{\cos(1/4)}&{\sin(1/4)}\end{pmatrix}S_{N}(1,\lambda) =0.
\end{equation}

We computed the eigenvalues $\lambda_n$ of the problem for $|n|\le 100$ taking $N=10$ for the approximate solution and compared obtained results with those from \cite{KTG1}. The computation time was less than 1 second. On Figure \ref{fig:Ex2} we present the graph of the absolute errors of eigenvalues obtained by both methods. As expected, the precision achieved by the method  from \cite{KTG1} based on the truncated SPPS representation with 100 terms and the spectral shift technique is better for the first eigenvalues, but the precision of $\lambda_n$ having $|n|\ge 12$ is better for the proposed method. Since the proposed algorithm can be changed slightly so that the eigenvalues close to 0 are taken from the SPPS method (see \cite[Example 7.5]{KrT2015} for details), more important is the accuracy of the higher-index eigenvalues. We would like to point out that errors of the computed eigenvalues do not deteriorate for large indices $n$. Also it is worth mentioning that the truncated SPPS representation with the spectral shift technique requires recalculation of the formal powers \eqref{Phisubk} and \eqref{Psisubk} for each eigenvalue, while the proposed method needs to calculate these formal powers only once. The comparison with the results reported in \cite{AnnabyTharwat} (where only 4 eigenvalues were reported) was done in \cite{KTG1}. Comparison for higher-index eigenvalues is not possible.

\begin{figure}[htbp]
\centering
\includegraphics[width=5in,height=2.6in]{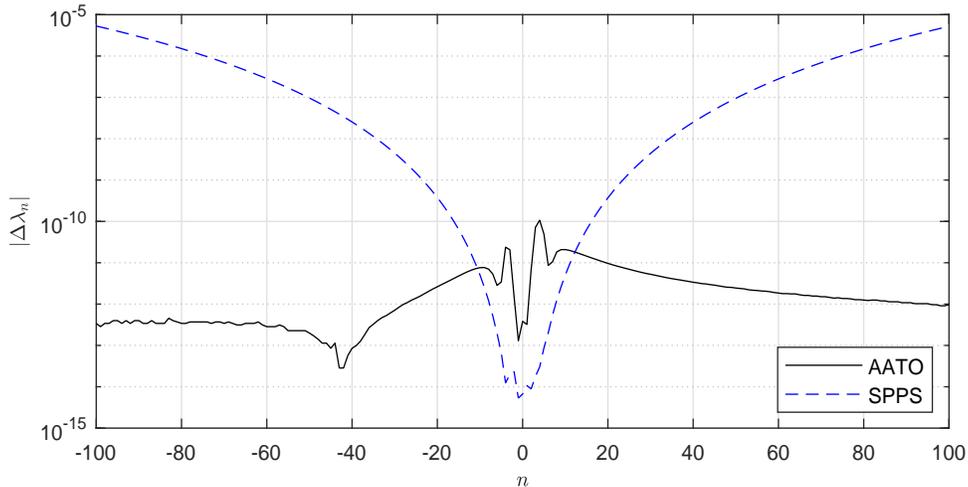}
\caption{Absolute errors of the eigenvalues of problem \eqref{Eq ExAnT}, \eqref{Eq BoundCExAnT} obtained by the proposed method (called AATO from analytic approximation of transmutation operators) with $N=10$ (solid black line) and obtained
by using
the SPPS method with $N=100$ and the spectral shift technique from \cite{KTG1} (dashed blue line).} \label{fig:Ex2}
\end{figure}

\appendix

\section{Well-posedness of the Goursat problem}
\label{Append1}
Consider the following Goursat problem in the domain $\Omega^{+}=\left\{(x,t): 0\le x\le b,\, \left|t\right|\le x\right\}$
\begin{align}
BK_{x}(x,t)+K_{t}(x,t)B &=-Q(x)K(x,t),\label{Eq ForK}\\
BK(x,x)-K(x,x)B &=E_{1}(x),\label{Eq GsatData+}\\
BK(x,-x)+K(x,-x)B&=E_{2}(x),\label{Eq GsatData-}
\end{align}
where $E_{1}$ and $E_{2}$ satisfy the compatibility conditions $E_{1}\in\mathcal{H}^{-}$ and $E_{2}\in\mathcal{H}^{+}$. Here $\mathcal{H}=L_2((0,b), \mathcal{M}_2)$ and the subspaces $\mathcal{H}^{\pm}$ are defined as in Section \ref{Sect4}.

Similar to \cite{VMchenko}, consider a change of variables $\xi=\frac{1}{2}(x+t)$, $\eta=\frac{1}{2}(x-t)$ and set $K(x,t)=H(\xi(x,t),\eta(x,t))$ in order to obtain an equivalent integral equation. It follows that
\begin{equation}\label{Eq KchainruleH}
K_x=\frac{1}{2}(H_\xi+H_\eta) \qquad \text{and} \qquad K_t=\frac{1}{2}(H_\xi-H_\eta).
\end{equation}
Substituting \eqref{Eq KchainruleH} into \eqref{Eq ForK}, multiplying on the left by $-B$ and taking into account \eqref{Eq Projectors} we get
\[
\mathcal{P}^{-}\left[H_{\xi}\right](\xi,\eta)+\mathcal{P}^{+}\left[H_{\eta}\right](\xi,\eta)=BQ(\xi+\eta)H(\xi,\eta).
\]
Also note that the left hand sides in \eqref{Eq GsatData+} and \eqref{Eq GsatData-} have the form
\begin{align}
BH(\xi,0)-H(\xi,0)B&=2B\mathcal{P}^{+}\left[H\right](\xi,0),\notag\\
BH(0,\eta)+H(0,\eta)B&=2B\mathcal{P}^{-}\left[H\right](0,\eta).\notag
\end{align}
From the above, one can see that the problem \eqref{Eq ForK}--\eqref{Eq GsatData-} becomes
\begin{align}
\mathcal{P}^{-}\left[H_{\xi}\right](\xi,\eta)+\mathcal{P}^{+}\left[H_{\eta}\right](\xi,\eta)&=BQ(\xi+\eta)H(\xi,\eta)\label{Eq KernelH},\\
\mathcal{P}^{+}\left[H\right](\xi,0)&=-\frac{1}{2}BE_{1}(\xi)\label{Eq GsatData1H},\\
\mathcal{P}^{-}\left[H\right](0,\eta)&=-\frac{1}{2}BE_{2}(\eta)\label{Eq GsatData2H},
\end{align}
in the domain
\[
\Xi^{+}=\left\{(\xi,\eta)\,|\,\,0\le \xi<b,\, 0\le \eta<b-\xi \right\}.
\]

Applying $\mathcal{P}^{-}$ on both sides of \eqref{Eq KernelH} and integrating with respect to $\xi$ yields
\[
\mathcal{P}^{-}\left[H\right](\xi,\eta)=\mathcal{P}^{-}\left[H\right](0,\eta)+\int_{0}^{\xi}\mathcal{P}^{-}\left[ BQ(u+\eta)H(u,\eta)\right]\,du.
\]
Similarly, applying $\mathcal{P}^{+}$ and integrating respect to $\eta$ yields
\[
\mathcal{P}^{+}\left[H\right](\xi,\eta)=\mathcal{P}^{+}\left[H\right](\xi,0)+\int_{0}^{\eta}\mathcal{P}^{+}\left[ BQ(\xi+v)H(\xi,v)\right]\,dv.
\]
Since the product $BQ$ belongs to $\mathcal{H}^{-}$ it is easy to check that
\begin{align*}
\mathcal{P}^{-}\left[ BQ(u+\eta)H(u,\eta)\right]&=BQ(u+\eta)\mathcal{P}^{+}\left[H\right](u,\eta),\\
\mathcal{P}^{+}\left[ BQ(\xi+v)H(\xi,v)\right]&=BQ(\xi+v)\mathcal{P}^{-}\left[H\right](\xi,v).
\end{align*}
Since $\mathcal{P}^{-}\left[H\right]+\mathcal{P}^{+}\left[H\right]=H$, it follows that
\begin{equation}\label{Eq IntegralEquationH}
H(\xi,\eta)=-\frac{1}{2}BE_{1}(\xi)-\frac{1}{2}BE_{2}(\eta)
+\int_{0}^{\xi} BQ(u+\eta)\mathcal{P}^{+}\left[H\right](u,\eta)\,du
+\int_{0}^{\eta} BQ(\xi+v)\mathcal{P}^{-}\left[H\right](\xi,v)\,dv,
\end{equation}
is an equivalent integral equation to the Goursat problem \eqref{Eq ForK}--\eqref{Eq GsatData-}.
\begin{remark}\label{Remark Eq for K}
In the case of Theorem \ref{Th DiracTransOpe}, $E_{1}(\xi)=-Q(\xi)$ and $E_{2}(\eta)=0$. Moreover, we do not need additional assumptions on the potential in order to transform problem \eqref{Eq ForK}--\eqref{Eq GsatData-} into  \eqref{Eq KernelH}--\eqref{Eq GsatData2H}, see \cite{CoxKnobel}, \cite{HrynivPronska}, \cite{LevitanSargsjan}, \cite{Yamamoto}. Thus, $K(x,t)$ is a solution of the problem \eqref{Eq ForK}--\eqref{Eq GsatData-} if and only if $H(\xi,\eta)$ is a solution of \eqref{Eq KernelH}--\eqref{Eq GsatData2H}
\end{remark}
The existence of a solution of the integral equation \eqref{Eq IntegralEquationH} can be
established by the method of successive approximations.
\begin{theorem}\label{Th ExisUniqH}
Let $Q, E_1, E_2\in L_{2}\bigl((0,b),\mathcal{M}_2\bigr)$. Then the integral equation \eqref{Eq IntegralEquationH}
has a unique solution belonging to $L_{2}\bigl(\Xi^{+},\mathcal{M}_2\bigr)$ and the following estimate holds
\begin{equation}\label{EstimateSolutionGoursat}
\| H(\xi,\eta)\|_{L_2(\Xi^+)} \le \frac{\|E_1\|_{L_2(0,b)}+\|E_2\|_{L_2(0,b)}}{2} \left(\sqrt b + \frac{b^2}2 \|Q\|_{L_2(0,b)}\exp\left( \sqrt b \|Q\|_{L_2(0,b)}\right)\right).
\end{equation}
Moreover, if the matrix-valued functions $Q$, $E_1$ and $E_2$ are continuous, then the kernel $H(\xi,\eta)$ is continuous and satisfies the inequality
\[
\left|H(\xi,\eta)\right|\le \frac{\|E_1\|_{C[0,b]}+\|E_2\|_{C[0,b]}}{2}\exp\left(b\left\|Q\right\|_{C[0,b]}\right), \qquad (\xi,\eta)\in \Xi^{+}.
\]
\end{theorem}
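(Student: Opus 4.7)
The plan is to construct the solution of the integral equation \eqref{Eq IntegralEquationH} by the standard method of successive approximations. Setting
\[
H_0(\xi,\eta) = -\tfrac{1}{2}BE_1(\xi) - \tfrac{1}{2}BE_2(\eta),
\qquad
\mathcal{L}[H](\xi,\eta) = \int_0^\xi BQ(u+\eta)\mathcal{P}^+[H](u,\eta)\,du + \int_0^\eta BQ(\xi+v)\mathcal{P}^-[H](\xi,v)\,dv,
\]
I would look for the solution as the Neumann series $H = \sum_{n=0}^\infty h_n$, where $h_n = \mathcal{L}^n[H_0]$. The three ingredients I would use throughout are: (i) $B$ is unitary in the Frobenius norm, so $|BA|=|A|$; (ii) $\mathcal{P}^{\pm}$ are orthogonal projectors with respect to the Frobenius inner product, so $|\mathcal{P}^{\pm}[A]| \le |A|$; and (iii) Cauchy--Schwarz applied to each Volterra integral.

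First I would bound the free term. Since for every fixed $\xi$ the region $\{\eta : 0 \le \eta \le b-\xi\}$ has length at most $b$, a direct computation from the triangle inequality combined with (i) produces
\[
\|H_0\|_{L_2(\Xi^+)} \le \frac{\sqrt{b}}{2}\bigl(\|E_1\|_{L_2(0,b)} + \|E_2\|_{L_2(0,b)}\bigr),
\]
which is exactly the first term inside the parentheses of \eqref{EstimateSolutionGoursat}. Next, applying Cauchy--Schwarz to each integral defining $\mathcal{L}[H]$ together with (i)--(ii) yields
\[
|\mathcal{L}[H](\xi,\eta)|^2 \le 2\,\mathcal{Q}_1(\xi,\eta)\int_0^\xi |H(u,\eta)|^2\,du + 2\,\mathcal{Q}_2(\xi,\eta)\int_0^\eta |H(\xi,v)|^2\,dv,
\]
where $\mathcal{Q}_j \le \|Q\|_{L_2(0,b)}^2$. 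Iterating this on the triangular domain and tracking the factors carefully, I expect to obtain by induction a bound of the form $\|h_n\|_{L_2(\Xi^+)} \le \|H_0\|_{L_2(\Xi^+)}\cdot b \cdot \frac{(\sqrt{b}\,\|Q\|_{L_2(0,b)})^{n}}{n!}$ (up to universal constants), so that summing over $n$ produces the Volterra-type factor $\frac{b^2}{2}\|Q\|_{L_2}\exp(\sqrt{b}\|Q\|_{L_2})$ appearing in \eqref{EstimateSolutionGoursat}.

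The main obstacle I foresee is the bookkeeping in this $L_2$ Volterra iteration: one must show at each step that the enlargement of the integration region contributes a factor of order $b/n$ rather than a constant, which is what makes the Neumann series convergent. Once this is settled, uniqueness follows from the same Volterra estimate applied to the difference of two $L_2$ solutions, which must then satisfy the homogeneous iteration and hence vanish.

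For the continuous case the argument is easier. Assuming $Q, E_1, E_2$ continuous, I note that each $h_n$ is continuous (integrals of continuous functions), and the analogous pointwise induction based on the inequality $\int_0^\xi(u+\eta)^n du + \int_0^\eta(\xi+v)^n dv \le \tfrac{2(\xi+\eta)^{n+1}}{n+1}$ gives $|h_n(\xi,\eta)| \le \|H_0\|_{C(\Xi^+)} (C\|Q\|_{C[0,b]}(\xi+\eta))^n/n!$, yielding uniform convergence on $\Xi^+$ and the claimed exponential bound. Finally, returning through the change of variables (Remark \ref{Remark Eq for K}) transfers the well-posedness and the estimates from $H$ on $\Xi^+$ to the kernel $K$ on $\Omega^+$, which is what is used in Proposition \ref{Pp EstimateInL2x-Norm} and Theorem \ref{Th AATOForDiracUniform}.
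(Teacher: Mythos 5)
Your overall architecture coincides with the paper's: the same free term $H_0=-\tfrac12 BE_1(\xi)-\tfrac12 BE_2(\eta)$, the same Volterra operator $\mathcal{L}$, the Neumann series, the bound $\|H_0\|_{L_2(\Xi^+)}\le\tfrac{\sqrt b}{2}(\|E_1\|+\|E_2\|)$, uniqueness from the homogeneous iteration, and essentially the paper's argument in the continuous case. However, there is a genuine gap exactly where you flag "the main obstacle": the factorial decay of the iterates is asserted ("I expect to obtain by induction\ldots") but not proved, and the scheme you set up cannot deliver it. If you only track $\|h_n\|_{L_2(\Xi^+)}$, the inequality $|\mathcal{L}[h]|^2\le 2\|Q\|^2(\int_0^\xi|h(u,\eta)|^2du+\int_0^\eta|h(\xi,v)|^2dv)$ integrated over $\Xi^+$ gives only $\|h_{n+1}\|_{L_2(\Xi^+)}\le 2\sqrt{b}\,\|Q\|_{L_2}\|h_n\|_{L_2(\Xi^+)}$, i.e.\ a geometric series that converges only for small $b\|Q\|^2$; the factor $b/n$ you hope for is simply not visible at the level of norms over the full triangle.

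The paper closes this gap by running the induction \emph{pointwise} from $n=1$ onward, in the single variable $s=\xi+\eta$, with hypothesis $|H_n(\xi,\eta)|\le \frac{C_E\,(\xi+\eta)^{(n-1)/2}\sigma^{n/2}(\xi+\eta)}{2\sqrt{(n-1)!\,n!}}$ where $\sigma(s)=\int_0^s|Q|^2$ and $C_E=\|E_1\|_{L_2}+\|E_2\|_{L_2}$. Two ingredients make this work and are absent from your sketch. First, the compatibility conditions $E_1\in\mathcal{H}^-$, $E_2\in\mathcal{H}^+$ force $\mathcal{P}^+[H_0]=-\tfrac12 BE_1(\xi)$ and $\mathcal{P}^-[H_0]=-\tfrac12 BE_2(\eta)$ to each depend on one variable only, so a single Cauchy--Schwarz already gives the \emph{pointwise} bound $|H_1|\le\tfrac{C_E}{2}\sigma^{1/2}(\xi+\eta)$ even though $H_0$ itself is merely $L_2$; moreover each $H_n$ splits into an $\mathcal{H}^+$ and an $\mathcal{H}^-$ piece carrying $\|E_1\|$ and $\|E_2\|$ separately, so the two integrals $\int_0^\xi f(u+\eta)\,du\le\int_0^{\xi+\eta}f$ and $\int_0^\eta f(\xi+v)\,dv\le\int_0^{\xi+\eta}f$ recombine without doubling the constant. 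Second, the $1/n$ gain per step comes from splitting $\int_0^{s}|Q(u)|\,u^{(n-1)/2}\sigma^{n/2}(u)\,du$ by Cauchy--Schwarz into $\bigl(\int_0^{s}u^{n-1}du\bigr)^{1/2}\bigl(\int_0^{s}|Q|^2\sigma^{n}du\bigr)^{1/2}=\bigl(s^{n}/n\bigr)^{1/2}\bigl(\sigma^{n+1}(s)/(n+1)\bigr)^{1/2}$. Only after these uniform-in-$(\xi,\eta)$ bounds are in hand does one integrate over $\Xi^+$ (area $b^2/2$) to get the stated $L_2$ estimate. Your proposal would need to be reorganized along these lines to be complete; as written, the convergence of the Neumann series in the $L_2$ setting is not established.
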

\begin{proof}
The proof is standard by the method of successive approximations. Let  $\left\{H_{n}\right\}_{n=0}^{\infty}$ be a sequence of matrix-valued functions given by
\begin{equation}\label{Eq SuccApproxHn}
H_{n}(\xi,\eta)=\int_{0}^{\xi} BQ(u+\eta)\mathcal{P}^{+}\left[H_{n-1}(u,\eta)\right]\,du+\int_{0}^{\eta} BQ(\xi+v)\mathcal{P}^{-}\left[H_{n-1}(\xi,v)\right]\,dv,
\end{equation}
where
\begin{equation}\label{H0term}
H_{0}(\xi,\eta)=-\frac{1}{2}BE_{1}(\xi)-\frac{1}{2}BE_{2}(\eta).
\end{equation}
Let us proceed by induction in order to get the following estimate
\begin{equation}\label{Pr IqEstimateHn}
\left|H_{n}(\xi,\eta)\right|\le \frac{C_E(\xi+\eta)^{\frac{n-1}2} \sigma^{\frac n2}(\xi+\eta)}{2\sqrt{(n-1)!n!}},
\qquad n=1,2\ldots,
\end{equation}
where
\[
C_E:=\|E_1\|_{L_2(0,b)}+\|E_2\|_{L_2(0,b)}\qquad \text{and}\qquad \sigma(\xi+\eta):=\int_{0}^{\xi+\eta}|Q(\theta)|^{2}\,d\theta.
\]

Indeed, due to the compatibility conditions of the functions $E_1$ and $E_2$ we have $\mathcal{P}^+[H_0(\xi,\eta)] = -\frac 12 B\mathcal{P}^+[E_1(\xi)]=-\frac 12 BE_1(\xi)$ and $\mathcal{P}^-[H_0(\xi,\eta)] = -\frac 12 B\mathcal{P}^-[E_2(\eta)]=-\frac 12 BE_2(\eta)$, hence
\[
\begin{split}
H_1(\xi,\eta) &= -\frac 12 \int_0^\xi BQ(u+\eta)BE_1(u)\,du - \frac 12 \int_0^\eta BQ(\xi+v) BE_2(v)\,dv \\
&= -\frac 12 \int_0^\xi Q(u+\eta)E_1(u)\,du - \frac 12 \int_0^\eta Q(\xi+v) E_2(v)\,dv,
\end{split}
\]
to obtain the last equality we used $BQ = -QB$.
It follows that
\begin{align*}
\left|H_{1}(\xi,\eta)\right|&\leq \frac{1}{2}\left(\int_{0}^{\xi}|Q(u+\eta)|^{2}\,du\right)^{1/2}\left(\int_{0}^{\xi}|E_1(u)|^{2}\,du\right)^{1/2}\\
&\quad+
\frac{1}{2}\left(\int_{0}^{\eta}|Q(\xi+v)|^{2}\,dv\right)^{1/2}\left(\int_{0}^{\eta}|E_2(v)|^{2}\,dv\right)^{1/2}\\
&\leq\frac{1}{2}\left(\|E_1\|_{L_2(0,b)}+\|E_2\|_{L_2(0,b)}\right) \left(\int_{0}^{\xi+\eta}|Q(\theta)|^{2}\,d\theta\right)^{1/2} = \frac{C_E}2 \sigma^{1/2}(\xi+\eta).
\end{align*}
Note that $\sigma'(\xi+\eta)=|Q(\xi+\eta)|^{2}$ and that $\sigma(u)$ is a monotonically increasing function. Similarly, as $H_{1}$ is the sum of two terms, one corresponding to $E_1$ and other to $E_2$, one belonging to $L_{2}^+\bigl(\Xi^{+},\mathcal{M}_2\bigr)$ and the other to $L_{2}^-\bigl(\Xi^{+},\mathcal{M}_2\bigr)$ (see Section \ref{Sect4} for definition of $^\pm$ subspaces), we see that
\begin{align*}
\displaybreak[2]
\left|H_{2}(\xi,\eta)\right|&\leq \frac{\|E_2\|}{2}\int_{0}^{\xi}|Q(u+\eta)|\sigma^{1/2}(u+\eta)du + \frac{\|E_1\|}{2}\int_{0}^{\eta}|Q(\xi+v)|\sigma^{1/2}(\xi+v)dv \\
&\le \frac{C_E}2\int_{0}^{\xi+\eta}|Q(u)|\sigma^{1/2}(u)du
\le \frac{C_E}2\left(\int_{0}^{\xi+\eta}dv\right)^{1/2}\left(\int_{0}^{\xi+\eta}|Q(u)|^{2}\sigma(u)du\right)^{1/2}\\
&=\frac{C_E}{2}\sqrt{\xi+\eta}\left(\frac{w^{2}}{2}\Bigl|_{\sigma(0)}^{\sigma(\xi+\eta)}\right)^{1/2}= \frac{C_E}{2\sqrt 2}\sqrt{\xi+\eta}\,\sigma(\xi+\eta),
\end{align*}
which coincides with the right-hand side in \eqref{Pr IqEstimateHn}.

Now we proceed by induction. Similarly,  $H_{n}$ is the sum of two terms, one corresponding to $E_1$ and other corresponding to $E_2$, belonging to $L_{2}^{\pm}\bigl(\Xi^{+},\mathcal{M}_2\bigr)$. Supposing that \eqref{Pr IqEstimateHn} holds for $n$, we get for $n+1$
\begin{align*}
\displaybreak[2]
\left|H_{n+1}(\xi,\eta)\right| &\le \frac{C_E }{2\sqrt{(n-1)!n!}} \int_0^{\xi+\eta} |Q(u)| u^{\frac{n-1}2} \sigma^{\frac n2}(u) \,du\\
\displaybreak[2]
& \le \frac{C_E }{2\sqrt{(n-1)!n!}}\left(\int_0^{\xi+\eta} u^{n-1}\,du\right)^{1/2} \left(\int_0^{\xi+\eta} |Q(u)|^2 \sigma^n(u)\,du\right)^{1/2}\\
& = \frac{C_E }{2\sqrt{(n-1)!n!}} \left(\frac{(\xi+\eta)^{n}}{n}\right)^{1/2} \left(\frac{\sigma^{n+1}(\xi+\eta)}{n+1}\right)^{1/2},
\end{align*}
which gives exactly expression \eqref{Pr IqEstimateHn} for $n+1$.

Consider the series
\begin{equation}\label{SuccessiveApproxSeries}
H(\xi,\eta):=H_0(\xi, \eta) + \sum_{n=1}^\infty H_n(\xi,\eta).
\end{equation}
Each term $H_n$, $n\ge 1$, is a continuous function possessing an estimate (due to \eqref{Pr IqEstimateHn})
\[
|H_n(\xi, \eta)|\le \frac{C_E\|Q\|_{L_2(0,b)}}{2} \frac{b^{\frac{n-1}2} \|Q\|^{n-1}_{L_2(0,b)}}{(n-1)!}.
\]
As for the first term, we obtain from \eqref{H0term} that
\[
\|H_0(\xi,\eta)\|_{L_2(\Xi^+)} \le \frac 12\left(\int_{\Xi^+}|E_1(\xi)|^2 d\xi d\eta\right)^{1/2}+\frac 12\left(\int_{\Xi^+}|E_2(\eta)|^2 d\xi d\eta\right)^{1/2} \le \frac{C_E\sqrt b}2.
\]

Finally we conclude that the series \eqref{SuccessiveApproxSeries} converges in $L_2(\Xi^+,\mathcal{M}_2)$ to the solution $H$ of \eqref{Eq IntegralEquationH} and the following estimate holds
\[
\| H(\xi,\eta)\|_{L_2(\Xi^+)} \le \frac{C_E\sqrt b}2 + \frac{C_E b^2}4 \|Q\|_{L_2(0,b)}\exp\left( \sqrt b \|Q\|_{L_2(0,b)}\right),
\]
where we used that the area of $\Xi^+$ is equal to $b^2/2$. The same estimate proves the uniqueness of the solution $H$ (assuming there are two solutions, their difference satisfies \eqref{Eq IntegralEquationH} having $E_1=E_2\equiv 0$).

For the case when the functions $Q$, $E_1$ and $E_2$ are continuous, note that the term $H_0$ is also a continuous function. Let us denote for brevity $\|\cdot\|$ the $C\bigl([0,b],\mathcal{M}_2\bigr)$ norm. Then similarly to \eqref{Pr IqEstimateHn} we obtain that
\[
|H_n(\xi,\eta)|\le \frac{\|E_1\|+\|E_2\|}{2} \frac{\|Q\|^n (\xi+\eta)^n}{n!},\qquad n=0,1,\ldots
\]
and that the solution $H$ satisfies
\[
|H(\xi,\eta)|\le \frac{\|E_1\|+\|E_2\|}{2} \exp\left(b\|Q\|\right).\qedhere
\]
\end{proof}

Now we can present proof of Proposition \ref{Pp EstimateInL2x-Norm}.
\begin{proof}[Proof of Proposition \ref{Pp EstimateInL2x-Norm}]
We have from \eqref{SuccessiveApproxSeries} and \eqref{H0term} that
\begin{equation*}\label{K via H via series}
    K(x,t) = H\left(\frac{x+t}2, \frac{x-t}2\right) = -\frac 12 BE_1\left(\frac{x+t}2\right) - \frac 12 BE_2\left(\frac{x-t}2\right) + \sum_{n=1}^\infty H_n\left(\frac{x+t}2, \frac{x-t}2\right).
\end{equation*}
Hence
\[
\begin{split}
\|K(x,&\cdot)\|_{L_2(-x,x)} \le \frac 12\left(\int_{-x}^x \left|E_1\left(\frac{x+t}2\right)\right|^2 dt\right)^{1/2} +\frac 12\left(\int_{-x}^x \left|E_2\left(\frac{x-t}2\right)\right|^2 dt\right)^{1/2} \\
& \quad + \sum_{n=1}^\infty \left(\int_{-x}^x \left|H_n\left(\frac{x+t}2, \frac{x-t}2\right)\right|^2 dt\right)^{1/2}\\
& =\frac 12\left(2\int_{0}^x \left|E_1(z)\right|^2 dz\right)^{1/2}+\frac 12\left(2\int_{0}^x \left|E_2(z)\right|^2 dz\right)^{1/2}+
\sum_{n=1}^\infty \left( \int_{-x}^x \frac{C_E^2 x^{n-1} \sigma^n(x)}{4 (n-1)! n!}dt \right)^{1/2}\\
&\le \frac{C_E}{\sqrt 2}\left( 1+\sum_{n=1}^\infty \frac{x^{n/2}\sigma^{n/2}(x)}{(n-1)!}\right) = \frac{C_E}{\sqrt 2}\left(1+ \sqrt{x\sigma(x)} \exp(\sqrt{ x \sigma(x)})\right),
\end{split}
\]
from which, recalling that $\sqrt{\sigma(x)}\le \|Q\|_{L_2(0,b)}$, the statement follows .
\end{proof}

\section{Smoothness of the integral kernel $\boldsymbol{K}$.}
\label{Append2}

\begin{proposition}\label{Prop smoothness K}
Let $Q\in C^r\bigl([0,b],\mathcal{M}_2\bigr)$ for some $r\in \mathbb{N}_0$. Then the integral kernel $K$ satisfies
\[
K\in C^r\bigl(\Omega^+, \mathcal{M}_2\bigr).
\]
\end{proposition}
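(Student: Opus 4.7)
The plan is to pass to the integral formulation of Appendix~\ref{Append1}. Under the change of variables $\xi=(x+t)/2$, $\eta=(x-t)/2$, setting $K(x,t)=H(\xi,\eta)$ reduces the problem to showing $H\in C^r(\Xi^+,\mathcal{M}_2)$, since this change of variables is a smooth linear isomorphism of $\Omega^+$ onto $\Xi^+$. For the integral kernel of the transmutation operator we have $E_1=-Q$ and $E_2\equiv 0$, so the Goursat data inherits the $C^r$ regularity of $Q$.

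The key observation is that the integral equation \eqref{Eq IntegralEquationH} decouples after applying the projectors $\mathcal{P}^{\pm}$. Using that $BQ\in\mathcal{H}^-$ together with the commutation rules $\mathcal{P}^{\pm}[BQ\,A]=BQ\,\mathcal{P}^{\mp}[A]$ (a direct computation from \eqref{Eq Projectors}, using $QB=-BQ$), one finds that $U:=\mathcal{P}^+[H]$ and $V:=\mathcal{P}^-[H]$ satisfy
\begin{align*}
U(\xi,\eta) &= -\tfrac{1}{2}BE_1(\xi) + \int_0^\eta BQ(\xi+v)\,V(\xi,v)\,dv,\\
V(\xi,\eta) &= -\tfrac{1}{2}BE_2(\eta) + \int_0^\xi BQ(u+\eta)\,U(u,\eta)\,du.
\end{align*}
Differentiating these identities yields $U_\eta=BQ\cdot V$ and $V_\xi=BQ\cdot U$ as explicit formulas. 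Differentiating the first equation in $\xi$ and substituting $V_\xi=BQU$ inside the integrand gives
\[
U_\xi(\xi,\eta)=-\tfrac{1}{2}BE_1'(\xi)+\int_0^\eta BQ'(\xi+v)\,V(\xi,v)\,dv+\int_0^\eta Q^2(\xi+v)\,U(\xi,v)\,dv,
\]
where we used the algebraic identity $BQ\cdot BQ=Q^2$ (valid since $B\in\mathcal{H}^+$ while $Q\in\mathcal{H}^-$). A symmetric formula holds for $V_\eta$. Crucially, the right-hand sides of all four first-order partials involve only $Q,Q'$ and $E_i'$ together with $U$ and $V$, but no derivatives of $U$ or $V$.

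The proof now proceeds by induction on $r$. The base case $r=0$ is Theorem~\ref{Th ExisUniqH}. For the induction step, assume $U,V\in C^r$ and $Q\in C^{r+1}$ (hence $E_1,E_2\in C^{r+1}$). Each integrand appearing in the four explicit formulas for $U_\xi,U_\eta,V_\xi,V_\eta$ is a product of $C^r$ functions composed with affine maps, hence is of class $C^r$ jointly. Since integration over a variable interval against a $C^r$ integrand preserves joint $C^r$ regularity, every first-order partial derivative of $U$ and $V$ is of class $C^r$. Therefore $U,V\in C^{r+1}$, so $H=U+V\in C^{r+1}(\Xi^+,\mathcal{M}_2)$ and $K\in C^{r+1}(\Omega^+,\mathcal{M}_2)$.

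The main obstacle will be carrying out the projector decomposition cleanly and verifying the two algebraic identities on which it rests, namely $\mathcal{P}^{\pm}[BQA]=BQ\,\mathcal{P}^{\mp}[A]$ and $(BQ)^2=Q^2$. Once these are in place, the decisive point is that the formulas for the first partials of $U$ and $V$ never require derivatives of $U$ or $V$ themselves to appear under the integrals; this is what makes the inductive gain of exactly one derivative possible and reduces the smoothness claim to the elementary fact that indefinite integration preserves joint $C^r$ regularity.
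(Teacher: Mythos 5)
Your proof is correct and follows essentially the same route as the paper's: pass to the characteristic variables, use the Volterra integral formulation, obtain explicit formulas for the first-order partials of the kernel that involve only $Q$, $Q'$, $Q^2$ and the kernel itself (no derivatives of the kernel under the integral sign), and then induct on $r$. The only difference is how those formulas are derived: the paper differentiates the single equation \eqref{Eq IntegralForH} and then eliminates $\mathcal{P}^{-}[H_\xi]$ under the integral via the PDE \eqref{Eq KernelH} and an integration by parts, whereas you first decouple into the Volterra system for $U=\mathcal{P}^{+}[H]$ and $V=\mathcal{P}^{-}[H]$ and substitute $V_\xi = BQU$ directly --- a slightly cleaner derivation of equivalent identities, resting on the same algebraic facts $\mathcal{P}^{\pm}[BQA]=BQ\,\mathcal{P}^{\mp}[A]$ and $(BQ)^2=Q^2$.
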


\begin{proof}
According to Remark \ref{Remark Eq for K} the function $H(\xi,\eta) = K(\xi+\eta, \xi-\eta)$ satisfies the following integral equation
\begin{equation}\label{Eq IntegralForH}
H(\xi,\eta)=\frac{1}{2}BQ(\xi)+\int_{0}^{\xi} BQ(u+\eta)\mathcal{P}^{+}\left[H(u,\eta)\right]\,du+\int_{0}^{\eta} BQ(\xi+v)\mathcal{P}^{-}\left[H(\xi,v)\right]\,dv.
\end{equation}

The proof is by induction on $r$. In fact, we need only consider $r\ge 1$.

If $Q$ is continuously differentiable, one can deduce from \eqref{H0term} and \eqref{Eq SuccApproxHn} that the matrix-valued functions $H_n$ are differentiable and as a consequence, $H(\xi,\eta)$  can be differentiated with respect to both variables (we left the details to the reader).
Hence, differentiating and using \eqref{Eq KernelH} and integration by parts leads to
\begin{align}
H_{\xi}(\xi,\eta)&= \frac 12 BQ'(\xi) + BQ(\xi+\eta)\mathcal{P}^+ H(\xi,\eta) + \int_0^\eta BQ'(\xi+v)\mathcal{P}^-[H(\xi,v)]dv \nonumber\\
 &\quad+ \int_0^\eta BQ(\xi+v)\mathcal{P}^-[H_\xi(\xi,v)]dv \nonumber\\
\displaybreak[2]
&=\frac 12 BQ'(\xi) + BQ(\xi+\eta)\mathcal{P}^+ H(\xi,\eta) + \int_0^\eta BQ'(\xi+v)\mathcal{P}^-[H(\xi,v)]dv \nonumber\\
\displaybreak[2]
 &\quad+\int_0^\eta BQ(\xi+v)BQ(\xi+v)H(\xi,v)dv -\int_0^\eta BQ(\xi+v)\mathcal{P}^-[H_v(\xi,v)]dv \nonumber\\
\displaybreak[2]
&=\frac 12 BQ'(\xi) + BQ(\xi+\eta)\mathcal{P}^+ H(\xi,\eta) + \int_0^\eta BQ'(\xi+v)\mathcal{P}^-[H(\xi,v)]dv \nonumber\\
\displaybreak[2]
&\quad+\int_0^\eta Q^2(\xi+v)H(\xi,v)dv - \left.BQ(\xi+v)\mathcal{P}^+[H(\xi,v)]\right|_{v=0}^\eta + \int_0^\eta BQ'(\xi+v) \mathcal{P}^+[H(\xi,v)]dv \nonumber\\
&=\frac{1}{2}BQ'(\xi)+\frac{1}{2}Q^{2}(\xi)+\int_{0}^{\eta} \left(BQ'(\xi+v)+Q^{2}(\xi+v)\right)H(\xi,v)\,dv,\label{DerivativeRespecXi}
\end{align}
where we used \eqref{Eq GsatData1H}. Similarly,
\begin{equation}\label{DerivativeRespecEta}
H_{\eta}(\xi,\eta)=\int_{0}^{\xi} \left(BQ'(u+\eta)+Q^{2}(u+\eta)\right)H(u,\eta)\,du.
\end{equation}

Assume the statement holds for some $k$, that is, there exists continuous derivatives $\partial_\xi^l\partial_\eta^{k-l}H$ for all $0\le l\le k$. The equalities \eqref{DerivativeRespecXi}--\eqref{DerivativeRespecEta} are our tools to show that $H(\xi,\eta)$ has continuous derivatives with respect to both variables up to the order $k+1\le r$.
Given that
\begin{equation*}
\partial_{\xi}^{r+1}H(\xi,\eta)=\partial_{\xi}^{r}H_{\xi}(\xi,\eta) \qquad\text{and}\qquad \partial_\xi^l\partial_{\eta}^{r+1-l}H(\xi,\eta)=\partial_\xi^l\partial_{\eta}^{r-l}H_{\eta}(\xi,\eta),\quad l\le r,
\end{equation*}
substituting the right-hand sides of \eqref{DerivativeRespecXi} and \eqref{DerivativeRespecEta} and utilizing the induction hypothesis and the condition $Q\in C^r[0,b]$ we obtain that the above expressions are well defined, which finishes the proof.
\end{proof}

\section{Cauchy problem associated with the kernel equation}
\label{Append4}
Consider the following Cauchy problem for equation \eqref{Eq IntKernel} in domain $\Omega^+$ with initial condition given at $x=b$.
\begin{equation}\label{Pb CauchyPblemEqForK2}
	\begin{cases}
        BK_{x}(x,t)+K_{t}(x,t)B =-Q(x)K(x,t),& (x,t)\in\Omega^+,\\
        K(b,t)= F(t),& t\in [-b,b],
    \end{cases}
\end{equation}
where $F\in C([-b,b],\mathcal{M}_2)$. Similar Cauchy problem was considered in \cite{LevitanSargsjan}, however requiring additional differentiability condition on $Q$ and $F$. We present the proof of the well-posedness of this problem which neither rely on transforming the problem into a system of non-homogeneous wave equations nor require differentiability of $Q$ or $F$.

We transform this problem to an equivalent integral equation similarly to Appendix \ref{Append1}. Consider a change of variables $\xi=\frac{1}{2}(x+t)$, $\eta=\frac{1}{2}(x-t)$ and set $H(\xi,\eta) = K(x,t)$. Than equation \eqref{Eq IntKernel} transforms into equation \eqref{Eq KernelH}, and the initial condition into
\begin{equation}\label{CauchyH IC}
    H\left(\frac{b+t}2,\frac{b-t}2\right) = F(t), \qquad t\in [-b,b].
\end{equation}
Applying $\mathcal{P}^{-}$ on both sides of \eqref{Eq KernelH} and integrating with respect to $\xi$ from $\xi$ to $b-\eta$ yields
\[
\int_\xi^{b-\eta} \mathcal{P}^{-}[BQ(u+\eta) H(u,\eta)]du = \mathcal{P}^-[H](b-\eta, \eta) - \mathcal{P}^-[H](\xi,\eta) =
\mathcal{P}^-[F](b-2\eta) - \mathcal{P}^-[H](\xi,\eta).
\]
Similarly, applying $\mathcal{P}^{+}$ and integrating respect to $\eta$  from $\eta$ to $b-\xi$ yields
\[
\int_\eta^{b-\xi} \mathcal{P}^{+}[BQ(\xi+v) H(\xi,v)]dv = \mathcal{P}^+[H](\xi, b-\xi) - \mathcal{P}^+[H](\xi,\eta) =
\mathcal{P}^+[F](2\xi-b) - \mathcal{P}^+[H](\xi,\eta).
\]
And similarly to \eqref{Eq IntegralEquationH} we obtain an integral equation equivalent to Cauchy problem \eqref{Pb CauchyPblemEqForK2}.
\begin{equation}\label{Pb CauchyForH}
\begin{split}
    H(\xi,\eta) &= \mathcal{P}^+[F](2\xi-b) + \mathcal{P}^-[F](b-2\eta) \\
    &\quad- \int_\xi^{b-\eta} BQ(u+\eta)\mathcal{P}^{+}[H(u,\eta)]du - \int_\eta^{b-\xi}BQ(\xi+v) \mathcal{P}^{-}[ H(\xi,v)]dv.
\end{split}
\end{equation}

\begin{theorem}\label{Thm CauchyProblem}
Let $Q\in C\bigl([0,b],\mathcal{M}_2\bigr)$ and $F\in C\bigl([-b,b],\mathcal{M}_2\bigr)$. Then the integral equation \eqref{Pb CauchyForH}
has a unique solution belonging to $C\bigl(\Xi^{+},\mathcal{M}_2\bigr)$ and the following estimate holds
\begin{equation}\label{EstimateSolutionCauchy}
\begin{split}
| H(\xi,\eta)| &\le 2\|F\|_{C[-b,b]} \exp\left( b \|Q\|_{C[0,b]}\right).
\end{split}
\end{equation}
\end{theorem}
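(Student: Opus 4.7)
My plan is to solve the integral equation \eqref{Pb CauchyForH} by the method of successive approximations, in direct analogy with the proof of Theorem \ref{Th ExisUniqH}, the only novelties being that the new integration ranges are $[\xi,b-\eta]$ and $[\eta,b-\xi]$, and that the ``zeroth iterate'' is dictated by the Cauchy datum $F$ rather than by the Goursat data. Accordingly, set
\[
H_0(\xi,\eta)=\mathcal{P}^{+}[F](2\xi-b)+\mathcal{P}^{-}[F](b-2\eta),
\]
and define recursively, for $n\ge 1$,
\[
H_n(\xi,\eta)=-\int_\xi^{b-\eta}BQ(u+\eta)\,\mathcal{P}^{+}[H_{n-1}(u,\eta)]\,du-\int_\eta^{b-\xi}BQ(\xi+v)\,\mathcal{P}^{-}[H_{n-1}(\xi,v)]\,dv.
\]
Continuity of $Q$ and $F$ make every $H_n$ continuous on $\Xi^{+}$, and directly from the definition $|H_0(\xi,\eta)|\le 2\|F\|_{C[-b,b]}$, with the sharper componentwise bound $|\mathcal{P}^{\pm}[H_0]|\le\|F\|_{C[-b,b]}$.

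The key algebraic observation, inherited from the Goursat argument, is that $BQ\in\mathcal{H}^{-}$, so that $BQ\cdot\mathcal{H}^{\pm}\subset\mathcal{H}^{\mp}$ (this follows at once from $B^2=-I$ and the (anti)commutation characterizations of $\mathcal{H}^{\pm}$). Hence the first integral defining $H_n$ lands in $\mathcal{H}^{-}$ and the second in $\mathcal{H}^{+}$, which yields the decoupled recursions
\begin{align*}
\mathcal{P}^{-}[H_n](\xi,\eta) &= -\int_\xi^{b-\eta}BQ(u+\eta)\,\mathcal{P}^{+}[H_{n-1}](u,\eta)\,du,\\
\mathcal{P}^{+}[H_n](\xi,\eta) &= -\int_\eta^{b-\xi}BQ(\xi+v)\,\mathcal{P}^{-}[H_{n-1}](\xi,v)\,dv.
\end{align*}
Using submultiplicativity together with the unitary invariance of the Frobenius norm (which gives $|BQ|=|Q|$ pointwise), a straightforward induction in $n$, based on the substitution $w=b-u-\eta$ (or $w=b-\xi-v$) in each integral, produces the estimate
\[
|\mathcal{P}^{\pm}[H_n](\xi,\eta)|\le\|F\|_{C[-b,b]}\,\frac{\|Q\|_{C[0,b]}^{n}(b-\xi-\eta)^{n}}{n!},\qquad n\ge 0.
\]

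Summing, the series $H:=\sum_{n\ge 0}H_n$ is dominated on $\Xi^{+}$ by the exponential series $2\|F\|_{C[-b,b]}\exp\!\bigl(\|Q\|_{C[0,b]}(b-\xi-\eta)\bigr)$; since $\xi+\eta\ge 0$ on $\Xi^{+}$ this yields absolute and uniform convergence to a continuous limit $H$ satisfying \eqref{EstimateSolutionCauchy}. Term-by-term passage to the limit in the recursion identifies $H$ as a solution of \eqref{Pb CauchyForH}. Uniqueness is immediate from the same majorant estimate applied to the difference $\Delta$ of two solutions: $\Delta$ satisfies the integral equation with $F\equiv 0$, whence the bounds above with $\|F\|=0$ force $\Delta\equiv 0$. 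The one step warranting careful verification is the $\mathcal{P}^{\pm}$-decoupling of the iteration and the unitary-invariance bound $|BQ|=|Q|$; everything else is routine Picard-iteration bookkeeping.
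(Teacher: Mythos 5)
Your proposal is correct and follows essentially the same route as the paper: the paper's proof of Theorem \ref{Thm CauchyProblem} uses exactly this successive-approximation scheme with the same iterates $H_n$, obtains the bound $|H_n(\xi,\eta)|\le 2\|F\|\,\|Q\|^n(b-\xi-\eta)^n/n!$ ``similarly to the proof of Theorem \ref{Th ExisUniqH}'', and derives existence, the estimate \eqref{EstimateSolutionCauchy} and uniqueness from it. Your write-up merely makes explicit the $\mathcal{P}^{\pm}$-decoupling and the norm identity $|BQ|=|Q|$ that the paper leaves implicit, and both are easily verified.
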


\begin{proof}
The proof is by the successive approximations method. Let
\[
H_0(\xi,\eta) = \mathcal{P}^+[F](2\xi-b) + \mathcal{P}^-[F](b-2\eta)
\]
and
\[
H_n(\xi,\eta) = - \int_\xi^{b-\eta} BQ(u+\eta)\mathcal{P}^{+}[H_{n-1}(u,\eta)]du - \int_\eta^{b-\xi}BQ(\xi+v) \mathcal{P}^{-}[ H_{n-1}(\xi,v)]dv.
\]
Then similarly to the proof of Theorem \ref{Th ExisUniqH} one easily obtains that
\[
|H_{n}(\xi,\eta)|\le 2\|F\| \frac{\|Q\|^n (b-\xi-\eta)^n}{n!}, \qquad n=0,1,\ldots,
\]
where $\|F\|$ and $\|Q\|$ denotes uniform norm. Hence the matrix valued function $H = \sum_{n=0}^\infty H_n$ is a solution of \eqref{Pb CauchyForH} and satisfies the following estimate
\[
|H(\xi,\eta)| \le 2\|F\|\exp( b\|Q\|),\qquad (\xi,\eta)\in\Xi^+.
\]
The last estimate proves also the uniqueness of the solution.
\end{proof}

\section{Least squares solution of minimization problem \eqref{LeastSquaresMin}}
\label{Append5}
The procedure described in this appendix follows the general theory from \cite[Section 2.2]{Kirsch}.

Consider two Hilbert spaces. $X=\mathcal{M}_2^{n+1}$, consisting of columns whose entries are $2\times 2$ matrices, equipped with the scalar product
\[
\left\langle(A_0,\ldots,A_N)^T , (B_0,\ldots,B_N)\right\rangle = \sum_{n=0}^N \operatorname{tr}\bigl(A_n B_n^\ast\bigr),
\]
and $Y=L_2((0,b),\mathcal{M}_2)$. Let
\[
C = \begin{pmatrix}
\begin{pmatrix}
a_0 & b_0\\
c_0 & d_0
\end{pmatrix}\\
 \vdots \\
\begin{pmatrix}
a_N & b_N\\
c_N & d_N
\end{pmatrix}
\end{pmatrix}\qquad \text{and}\qquad N(x) =
\begin{pmatrix}
\mathcal{N}_0(x)\\
\mathcal{N}_1(x)\\
\vdots\\
\mathcal{N}_N(x)
\end{pmatrix}.
\]
Then $C$ is the solution of the minimization problem
\begin{equation}\label{LeastSquaresMin2}
\min_{Z\in X} \left\| K Z + \frac 12 Q\right\|_Y,
\end{equation}
where $K:X\to Y$ is the operator given by $K[Z](x) = N^T(x)Z$. Let $G\in Y$. Then it is easy to see that the adjoint operator $K^\ast:Y\to X$ is given by
\[
K^\ast[G] = \int_0^b \overline{ N(x)} G(x) \,dx.
\]

According to Lemma 2.10 from \cite{Kirsch} the solution of  minimization problem \eqref{LeastSquaresMin2} coincides with the solution of the normal equation $K^\ast K Z = -\frac 12 K^\ast[Q]$, or
\[
\int_0^b \overline{ N(x)} N^T(x)\, dx \cdot Z = -\frac 12 \int_0^b \overline{ N(x)}Q(x)\,dx.
\]
Let us introduce the $2(N+1)\times 2(N+1)$ matrix $A$ consisting of the $2\times 2$ blocks $\int_0^b \overline{\mathcal{N}_i(x)} \mathcal{N}_j(x)\,dx$, $i,j=0\ldots N$. Also let us denote the first column vector of the matrix $-\frac 12 \int_0^b \overline{ N(x)}Q(x)\,dx$ by $b_1$ and the second column vector by $b_2$. Then the vector $(a_0, c_0, a_1, c_1,\ldots, a_N, c_N)^T$ (the first column-vector of $C$) is the solution of the linear system $Ax = b_1$  and the vector $(b_0, d_0, b_1, d_1, \ldots, b_N, d_N)^T$ (the second column-vector of $C$) is the solution of the linear system $Ax = b_2$.

\end{document}